\newtheorem{theorem}{Theorem}[section]
\newtheorem{lemma}[theorem]{Lemma}
\newtheorem{corollary}[theorem]{Corollary}
\newtheorem{proposition}[theorem]{Proposition}
\theoremstyle{definition}
\newtheorem{remark}[theorem]{Remark}
\newtheorem{definition}[theorem]{Definition}
\renewcommand{\appendix}{\par
\setcounter{section}{0}%
\setcounter{subsection}{0}%
\setcounter{subsubsection}{0}%
\gdef\thesection{\@Alph\c@section}%
\gdef\thesubsection{\@Alph\c@section.\@arabic\c@subsection}%
\gdef\theHsection{\@Alph\c@section.}%
\gdef\theHsubsection{\@Alph\c@section.\@arabic\c@subsection}%
\csname appendixmore\endcsname
}
\numberwithin{equation}{section}
\begin{document}

\arraycolsep=1pt

\title{\bf\Large Characterization of Vanishing Campanato Spaces via Ball Banach Function Spaces
and Its Applications
\footnotetext{\hspace{-0.35cm} 2020 {\it
Mathematics Subject Classification}. Primary 42B35;
Secondary 42B25, 46E30.
\endgraf {\it Key words and phrases.}
BMO, Campanato space, ball Banach function space, self-improvement, vanishing mean oscillation.
\endgraf This project is partially supported by the National
Key Research and Development Program of China
(Grant No. 2020YFA0712900),
the National Natural Science Foundation of China
(Grant Nos. 12301114, 12431006, and 12371093), and
the Natural Science Foundation of Hubei Province of China
(Grant Nos. 2023AFB134 and JCZRYB202401287).
}}
\author{Xing Fu, Yoshihiro Sawano, Jin Tao\footnote{Corresponding author,
E-mail: \texttt{jintao@hubu.edu.cn}/{\color{red}\today}/Final version.}
\ and Dachun Yang
}
\date{}
\maketitle

\vspace{-0.9cm}

\begin{center}
\begin{minipage}{13cm}
{\small {\bf Abstract}\quad
In this article, the authors provide some new characterizations of
several vanishing Campanato spaces using a type of oscillation
defined within the general framework of ball Banach function spaces.
This approach yields fresh insights even in the special case of
the vanishing BMO space.
The characterization reveals a self-improvement phenomenon
inherent in vanishing Campanato spaces.
A key innovation of this approach lies in using higher-order
differences to dominate oscillations.
Instead of directly estimating these differences,
the authors achieve the domination by smoothing the function via convolution.
As additional outcomes, the authors also obtain new characterizations of
vanishing Campanato spaces in terms of higher-order differences.
Finally, the authors present several examples to show that
these vanishing Campanato spaces naturally arise in the study on
the compactness of fractional integral commutators in Morrey spaces.
}
\end{minipage}
\end{center}

\vspace{0.2cm}

\tableofcontents

\section{Introduction}
\label{s1}

Throughout the whole article, we work in $\mathbb{R}^n$ and,
unless necessary, we will not explicitly specify this underlying space.

In 1961, John and Nirenberg \cite{jn61} introduced the well-known function space
$\mathrm{BMO}$ which is one of the most significant spaces in harmonic analysis
and proves useful in so many fields such as partial differential equations
and quasiconformal mappings.
Apart from $\mathrm{BMO}$, there also exist numerous studies on its vanishing subspaces.
For instance, Sarason \cite{s75} introduced the space $\mathrm{VMO}$
to study the stationary stochastic processes satisfying the
strong mixing condition and also the algebra $H^\infty+C$;
Uchiyama \cite{u78} equivalently characterized
the compactness of Calder\'on--Zygmund commutators
using the space $\mathrm{CMO}$
announced by Neri \cite{n75};
Recently Torres and Xue \cite{tx20} introduced a middle space
$\mathrm{XMO}$, ``smaller" than $\mathrm{VMO}$
and strictly larger than $\mathrm{CMO}$,
and used it to obtain the compactness of commutators generated by a certain type of
bilinear Calder\'on--Zygmund operators
including smooth (inhomogeneous) bilinear Fourier multipliers
and bilinear pesudodifferential operators as special examples.
Indeed, $\mathrm{XMO}$ is strictly smaller than $\mathrm{VMO}$,
which was proved in \cite{txyyJFAA} by characterizing $\mathrm{XMO}$
in terms of the vanishing behavior of mean oscillations.

As a natural generalization of $\mathrm{BMO}$,
the Campanato space
$\mathcal{L}_\alpha$ defined in Definition \ref{defBVCXMO}
has also attracted a lot of attention after the advent of
 the celebrated work of Campanato \cite{c64}.
It is realized as the dual space of the real Hardy space $H^p$
with $p\in(0,1)$.
Recently, there exist some further studies \cite{ghwy18,ln24,tyyMANN}
on the vanishing subspace of the H\"older--Zygmund space $\Lambda_\alpha$ with $\alpha\in(0,1)$
(also called Lipschitz space in some context)
which is a special case of Campanato spaces; see Remark \ref{rem-1}(ii).
We now recall the definitions of these Campanato-type spaces.
In what follows, for any $s\in\mathbb{Z}_+$ (the set of all non-negative integers),
$P_Q^{s}(f)$  denotes the unique (minimal) polynomial of degree not greater than $s$ such that,
for any  $\gamma:=(\gamma_1,\ldots,\gamma_n)\in \mathbb{Z}_+^n:=(\mathbb{Z}_+)^n$ with
$|\gamma|:=\gamma_1+\cdots+\gamma_n$,
\begin{align}\label{PQsf}
\int_Q \left[f(x)-P_Q^{s}(f)(x)\right]x^\gamma\,dx=0
\end{align}
if $|\gamma|\le s$,
where $x^\gamma:=x_1^{\gamma_1}\cdots x_n^{\gamma_n}$ for any $x:=(x_1,\ldots,x_n)\in\mathbb{R}^n$.
A direct calculation shows
that $$P_Q^{0}(f)=\langle f\rangle_Q:=\fint_{Q}f(x)\,dx:=\frac{1}{|Q|}\int_{Q}f(x)\,dx.$$
It is well known that, for any $s\in\mathbb{Z}_+$,
there exists a constant $C_{(s)}\in[1,\infty)$, independent of $f$ and $Q$,
such that, for any $x\in Q$,
\begin{align}\label{Cs}
\left|P_Q^{s}(f)(x)\right|\le C_{(s)} \fint_Q |f(y)|\,dy.
\end{align}
Let $z \in \mathbb{R}^n$ and $Q$ be a cube.
The symbol $Q + z$ denotes the cube translated by the vector $z$.
Denote by $\ell(Q)$
the edge length of the cube $Q$.
Let $\alpha \in [0, \infty)$ and define
\begin{align*}
\lfloor \alpha \rfloor := \max \{ s \in \mathbb{Z}_+ :\ s \le \alpha \}
\text{ and }
\lceil \alpha \rceil := \min \{ s \in \mathbb{Z}_+ :\ s \ge \alpha \}.
\end{align*}

Here is the precise definition of Campanato spaces
and their closed subspaces.
In what follows, the limit $\lim_{a \to 0^+}$ means that
there exists $c_0\in(0,\infty)$ such that $a\in(0,c_0)$ and $a\to0$.
\begin{definition}\label{defBVCXMO}
Let $\alpha\in[0,\infty)$.
\begin{itemize}
\item[\rm (i)]
The \emph{Campanato space} $\mathcal{L}_\alpha$ is defined to be
the set of all locally integrable functions $f$ on $\mathbb{R}^n$ such that
\begin{equation}\label{bmo}
\|f\|_{\mathcal{L}_\alpha} :=
\sup_{\substack{\text{cubes } Q }}
\mathcal{O}_\alpha(f; Q) :=
\sup_{\substack{\text{cubes } Q }}
\frac{1}{|Q|^{1 + \frac{\alpha}{n}}} \int_Q \left| f(x) - P_Q^{\lfloor \alpha \rfloor}(f)(x) \right|\, dx
\end{equation}
is finite, where $P_Q^{\lfloor \alpha \rfloor}(f)$ denotes
the polynomial of degree at most $\lfloor \alpha \rfloor$
that minimizes the $L^1$-norm of the difference $f - P$ on $Q$ (as in~\eqref{PQsf}
with $s$ replaced by $\lfloor \alpha \rfloor$).
\item[\rm (ii)]
The \emph{vanishing Campanato space} ${\mathrm{V}}\mathcal{L}_\alpha$
is defined to be the subspace of $\mathcal{L}_\alpha$
consisting of all functions $f$ such that
\[
\lim_{a \to 0^+} \sup_{\{Q:\,\ell(Q) \le a\}} \mathcal{O}_\alpha(f; Q) = 0.
\]

\item[\rm (iii)]
The \emph{vanishing Campanato space} ${\mathrm{X}}\mathcal{L}_\alpha$
is defined to be the set of all functions $f \in {\mathrm{V}}\mathcal{L}_\alpha$
such that, for any fixed cube $Q$,
$$\lim_{z \to \infty} \mathcal{O}_\alpha(f; Q + z) = 0.$$

\item[\rm (iv)]
The \emph{vanishing Campanato space} ${\mathrm{C}}\mathcal{L}_\alpha$
is defined to be the set of all functions $f \in {\mathrm{X}}\mathcal{L}_\alpha$ such that
\[
\lim_{a \to \infty} \sup_{\{Q:\,\ell(Q) \ge a\}} \mathcal{O}_\alpha(f; Q) = 0.
\]
\end{itemize}
\end{definition}

From Definition \ref{defBVCXMO}, we infer that the following assertions hold:
\begin{itemize}
\item[\rm (i)] For any function $f \in \mathcal{L}_\alpha$,
the quantity $\mathcal{O}_\alpha(f; Q)$
is uniformly bounded for all cubes $Q$.

\item[\rm (ii)] For any function $f \in {\mathrm{V}}\mathcal{L}_\alpha$,
the quantity $\mathcal{O}_\alpha(f; Q)$
vanishes uniformly for small cubes $Q$.

\item[\rm (iii)] For any function $f \in {\mathrm{X}}\mathcal{L}_\alpha$,
the quantity $\mathcal{O}_\alpha(f; Q)$
vanishes uniformly for small or far cubes $Q$.

\item[\rm (iv)] For any function $f \in {\mathrm{C}}\mathcal{L}_\alpha$,
the quantity $\mathcal{O}_\alpha(f; Q)$
vanishes uniformly for large, small or far cubes $Q$.
\end{itemize}
These assertions can be summarized as in the following table.
\begin{table}[htb]
\begin{center}
\begin{tabular}{|c|c|c|c|c|}
\hline   \diagbox{\textbf{Oscillation}}{\textbf{Space}} & $\,\ \mathcal{L}_{\alpha}\,\ $ &  ${\mathrm{V}}\mathcal{L}_{\alpha}$ & ${\mathrm{X}}\mathcal{L}_{\alpha}$ & ${\mathrm{C}}\mathcal{L}_{\alpha}$ \\
\hline  \makecell{uniformly bounded \\ for all cubes} & \color{blue}\raisebox{-0.5ex}{\Large\checkmark} &\color{blue} \raisebox{-0.5ex}{\Large\checkmark} & \color{blue}\raisebox{-0.5ex}{\Large\checkmark} & \color{blue}\raisebox{-0.5ex}{\Large\checkmark}   \\
\hline  \makecell{uniformly vanish \\ for small cubes} & \color{red}\raisebox{-0.5ex}{\scriptsize\XSolid} & \color{blue}\raisebox{-0.5ex}{\Large\checkmark} & \color{blue}\raisebox{-0.5ex}{\Large\checkmark} & \color{blue}\raisebox{-0.5ex}{\Large\checkmark}  \\
\hline   \makecell{vanish \\ for far cubes} &  \color{red}\raisebox{-0.5ex}{\scriptsize\XSolid} & \color{red}\raisebox{-0.5ex}{\scriptsize\XSolid} & \color{blue}\raisebox{-0.5ex}{\Large\checkmark} & \color{blue}\raisebox{-0.5ex}{\Large\checkmark}  \\
\hline   \makecell{uniformly vanish \\ for large cubes} & \color{red}\raisebox{-0.5ex}{\scriptsize\XSolid} & \color{red}\raisebox{-0.5ex}{\scriptsize\XSolid} & \color{red}\raisebox{-0.5ex}{\scriptsize\XSolid} & \color{blue}\raisebox{-0.5ex}{\Large\checkmark}  \\
\hline
\end{tabular}
\caption{(Vanishing) Campanato spaces\label{table:1}}
\end{center}
\end{table}

\begin{remark}\label{rem-1}
The relationships between the Campanato-type spaces in Definition \ref{defBVCXMO}
and the known function spaces are as follows.
\begin{itemize}
\item[\rm (i)]
When $\alpha = 0$, the space $\mathcal{L}_{\alpha}$ coincides with the classical space $\mathrm{BMO}$. Consequently, the subspaces ${\mathrm{V}}\mathcal{L}_{\alpha}$, ${\mathrm{X}}\mathcal{L}_{\alpha}$, and ${\mathrm{C}}\mathcal{L}_{\alpha}$ correspond, respectively, to
\begin{itemize}
    \item $\mathrm{VMO}$ as in \cite{s75},
    \item $\mathrm{XMO}$ as in \cite{txyyJFAA, tx20}, and
    \item $\mathrm{CMO}$ as in \cite{u78}.
\end{itemize}
In this case, we write $\mathcal{O}(f; Q)$ in place of $\mathcal{O}_0(f; Q)$ for simplicity.

\item[\rm (ii)]
If $\alpha\in(0,1)$, then the space $\mathcal{L}_{\alpha}$ coincides with
the H\"older--Zygmund space $\Lambda_{\alpha}$; see \cite{m64}.
In this case, we also denote $\mathcal{L}_{\alpha}$ by $\mathrm{BMO}_\alpha$.
Its subspaces then satisfy
\begin{itemize}
    \item ${\mathrm{X}}\mathcal{L}_{\alpha} = \mathrm{XMO}_\alpha$ as in \cite{tyyMANN} and
    \item ${\mathrm{C}}\mathcal{L}_{\alpha} = \mathrm{CMO}_\alpha$ as in \cite{ghwy18}.
\end{itemize}

\item[\rm (iii)]
The space $\mathcal{L}_{1}$ coincides with the Lipschitz space
${\rm Lip}$. To the best of our knowledge, the corresponding vanishing-type subspaces have not been systematically studied in the literature.

\item[\rm (iv)]
When $\alpha \in (1, \infty)$, a function $f$ belongs to $\mathcal{L}_{\alpha}$
if and only if the $(\lceil \alpha \rceil - 1)$th derivatives of $f$ belong to
the H\"older space $\Lambda_{\alpha - \lfloor \alpha \rfloor}$;
see, for instance, \cite{blyy21, c64} for more details.
\end{itemize}
\end{remark}

Recall that the well-known John--Nirenberg inequality illustrates
the self-improvement property of the space $\mathrm{BMO}$.
That is, one can replace the space $L^1$ in the definition \eqref{bmo}
with $\alpha = 0$ by the space $L^q$ for any $q \in (1, \infty)$.
This self-improvement phenomenon has been systematically studied by
Berkovits et al.~\cite{bkm16} by means of an abstract good-$\lambda$ inequality.

It is notable that a useful framework, called the ball Banach function
space $X$, is invented in \cite{shyy17}.
In this article, we investigate the corresponding self-improvement properties of vanishing Campanato spaces within a general framework
based on $X$.
In particular, our results provide new characterizations of vanishing Campanato spaces. The class $X$ includes not only classical Lebesgue spaces $L^q$ for $q \in [1, \infty)$, but also a rich collection of other function spaces such as weighted Lebesgue spaces, Morrey spaces, variable Lebesgue spaces, and mixed-norm Lebesgue spaces.

Recently, there has been growing interest in the study of ball Banach function spaces. For further developments and recent advances on this topic, we refer to \cite{dlyyz, dpgyyz, fsyy, syy24, tyyzPA, zhyy22, zwyy21, zlyyz24}, as well as the survey \cite{ln24}.

\begin{definition}\label{def-bbfs}
A quasi-Banach space $X\subset\mathscr M$ is called a
\emph{ball quasi-Banach function space} if it satisfies
\begin{itemize}
\item[(i)] $\|f\|_X=0$ implies that $f=0$ almost everywhere;
\item[(ii)] $|g|\le |f|$ almost everywhere implies that $\|g\|_X\le\|f\|_X$;
\item[(iii)] $0\le f_m\uparrow f$ almost everywhere implies that $\|f_m\|_X\uparrow\|f\|_X$
as $m\to\infty$;
\item[(iv)] for any ball $B$ in $\mathbb{R}^n$, $\mathbf{1}_B\in X$.
\end{itemize}
Moreover, a ball quasi-Banach function space $X$ is called a
\emph{ball Banach function space} if the norm of $X$
satisfies the triangle inequality: for any $f,\ g\in X$,
\begin{equation*}
\|f+g\|_X\le \|f\|_X+\|g\|_X,
\end{equation*}
and, for any ball $B$ of $\mathbb{R}^n$, there exists a positive constant $C_{(B)}$,
depending on $B$, such that, for any $f\in X$,
\begin{equation*}
\int_B|f(x)|\,dx\le C_{(B)}\|f\|_X.
\end{equation*}
\end{definition}

\begin{remark}
\begin{enumerate}[{\rm (i)}]
\item As mentioned in \cite[Remark 2.5(ii)]{YHYY22ams},
we obtain an equivalent formulation of
Definition \ref{def-bbfs} via replacing any ball
$B$ in $\mathbb{R}^n$ by any bounded measurable set
$S$ in $\mathbb{R}^n$.

\item In Definition \ref{def-bbfs}, if we replace any ball $B$
by any measurable $E$ with $|E|<\infty$, then
we obtain the definition of \emph{Banach function spaces},
which was originally
introduced by Bennett and Sharpley in
\cite[Chapter 1, Definitions 1.1 and 1.3]{bs}.
Using their definitions,
we easily find that
a Banach function space is always a ball Banach function space.
However,
the converse is not necessary to be true
(see, for instance, \cite[p.\,9]{shyy17}).

\item In Definition \ref{def-bbfs}, if we replace (iv)
by the following \emph{saturation property}:
\begin{enumerate}[{\rm (a)}]
\item[]
for any measurable set
$E$ in $\mathbb{R}^n$
with $|E|\in (0,\infty)$, there exists a measurable
set $F\subset E$ with $|F|\in (0,\infty)$ satisfying that
${\bf 1}_{F}\in X $,
\end{enumerate}
then we obtain the definition of Banach function spaces in the terminology
of Lorist and Nieraeth \cite[p.\,251]{ln24}. Moreover,
by \cite[Proposition 2.5]{zyy24}
(see also \cite[Proposition 4.21]{n23}),
we conclude that, if the normed vector space $X $ under
consideration satisfies the additional assumption that
the Hardy--Littlewood maximal operator $M$ is weakly bounded on one of its convexification,
then the definition of Banach function spaces in \cite{ln24}
coincides with the definition of ball
Banach function spaces. Thus, under this additional assumption, working with
ball Banach function spaces in the sense of Definition \ref{def-bbfs} or Banach
function spaces in the sense of \cite{ln24} would
yield exactly the same results.

\item  From \cite[Proposition 1.2.36]{lyh22}, we deduce that
both (ii) and (iii) of Definition \ref{def-bbfs}
imply that any ball Banach function space is complete.
\end{enumerate}
\end{remark}

Here is the definition of $X$-based Campanato spaces,
which we focus on in this paper.
\begin{definition}\label{def-oscX}
Let $\alpha \in [0,\infty)$, $X$ be a ball quasi-Banach function space,
$f \in L^1_{\mathrm{loc}}$, and $Q$ be a cube in $\mathbb{R}^n$.
The \emph{$X$-based Campanato seminorm $\mathcal{O}_{\alpha,X}(f;Q)$}
is defined by setting
\[
\mathcal{O}_{\alpha,X}(f;Q) := \frac{|Q|^{-\frac{\alpha}{n}}}{\|\mathbf{1}_Q\|_X} \left\| \left[f - P_Q^{\lfloor\alpha\rfloor}(f)\right]\mathbf{1}_Q \right\|_X,
\]
where $P_Q^{\lfloor\alpha\rfloor}(f)$ is the minimal polynomial as in \eqref{PQsf}
with $s$ replaced by $\lfloor\alpha\rfloor$.

Moreover, the \emph{spaces $\mathcal{L}_{\alpha,X}$, $\mathrm{V}\mathcal{L}_{\alpha,X}$,
$\mathrm{X}\mathcal{L}_{\alpha,X}$, and $\mathrm{C}\mathcal{L}_{\alpha,X}$}
are defined analogously to the corresponding spaces in
Definition~\ref{defBVCXMO} (and Table~\ref{table:1})
with $\mathcal{O}_{\alpha}(f;Q)$ replaced by $\mathcal{O}_{\alpha,X}(f;Q)$.
\end{definition}

Let $r\in(0,\infty)$ and $f \in L^1_{\mathrm{loc}}$.
Define the \emph{ball average}
\[
B_r(f)(x) := \frac{1}{|B(x,r)|} \int_{B(x,r)} |f(y)|\,dy.
\]
for any $x \in \mathbb{R}^n$.
Moreover, the \emph{Hardy--Littlewood maximal operator $M$} is defined by setting
\begin{align}\label{Mf}
M(f)(x) := \sup_{r\in(0,\infty)} B_r(f)(x),
\end{align}
for any locally integrable function $f$ and any $x \in \mathbb{R}^n$.

If $\alpha \in [0,1)$, then $P_Q^{\lfloor\alpha\rfloor}(f) = \langle f \rangle_Q$,
as previously noted. In this case, the space $\mathcal{L}_{\alpha,X}$ was
introduced in \cite{is17}. Let $X$ be a Banach function space such that
the Hardy--Littlewood maximal operator
$M$ is bounded on the associated K\"othe dual space $X'$
(See Definition~\ref{def-X'} for its precise definition).
It then follows from \cite[Theorems 1.1 and 1.3]{is17} that
\begin{align}\label{LX=L}
\mathcal{L}_{\alpha,X} = \mathcal{L}_{\alpha}
\end{align}
with equivalent norms.

Given this setting, a natural \emph{question} arises: does the equivalence
\eqref{LX=L}
still hold for the corresponding vanishing subspaces?

We now present the main result of this article, which provides an affirmative answer to the above question. Furthermore, it extends the admissible range for the parameter $\alpha$ and characterizes the vanishing Campanato spaces in terms of the ball Banach function space $X$.

\begin{theorem}\label{thm1}
Let $\alpha \in [0, \infty)$ and let $X$ be a ball Banach function space such that the Hardy--Littlewood maximal operator $M$ is bounded on the associate space $X'$. Then
$\mathrm{Y}\mathcal{L}_{\alpha} = \mathrm{Y}\mathcal{L}_{\alpha, X}$
for any $\mathrm{Y} \in \{\mathrm{V}, \mathrm{X}, \mathrm{C}\}.$
\end{theorem}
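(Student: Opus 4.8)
The plan is to deduce Theorem~\ref{thm1} from a sharpening of \eqref{LX=L}: a quantitative, scale-sensitive comparison between the classical oscillation $\mathcal O_\alpha(f;Q)$ and the $X$-oscillation $\mathcal O_{\alpha,X}(f;Q)$, from which membership in each of $\mathrm V\mathcal L_\alpha$, $\mathrm X\mathcal L_\alpha$, and $\mathrm C\mathcal L_\alpha$ can be transferred to and from the $X$-versions by inspecting only the families of cubes relevant to Definition~\ref{defBVCXMO}. One inclusion is elementary: since $X$ is a ball Banach function space, Hölder's inequality for $X$ and its associate space $X'$, together with the comparability $\|\mathbf 1_Q\|_X\,\|\mathbf 1_Q\|_{X'}\approx|Q|$ (which holds uniformly over cubes under the present hypothesis on $M$), gives
\[
\fint_Q|g|\le\frac{\|\mathbf 1_Q\|_{X'}}{|Q|}\,\|g\mathbf 1_Q\|_X\le\frac{C}{\|\mathbf 1_Q\|_X}\,\|g\mathbf 1_Q\|_X
\]
with $C$ independent of $Q$ and $g$. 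Taking $g=f-P_Q^{\lfloor\alpha\rfloor}(f)$ (the \emph{same} minimal polynomial \eqref{PQsf} enters both $\mathcal O_\alpha$ and $\mathcal O_{\alpha,X}$) yields $\mathcal O_\alpha(f;Q)\le C\,\mathcal O_{\alpha,X}(f;Q)$ for every cube $Q$, and this pointwise-in-$Q$ bound survives all the suprema and limits of Definition~\ref{defBVCXMO}; hence $\mathrm Y\mathcal L_{\alpha,X}\subseteq\mathrm Y\mathcal L_\alpha$ for each $\mathrm Y\in\{\mathrm V,\mathrm X,\mathrm C\}$ (and $\mathcal L_{\alpha,X}\subseteq\mathcal L_\alpha$).

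For the reverse inclusions the target is a \emph{self-improving} estimate: there exist $C,\delta\in(0,\infty)$ such that, for all $f\in\mathcal L_\alpha$ and all cubes $Q$ (when $\alpha>0$),
\[
\mathcal O_{\alpha,X}(f;Q)\le C\sum_{k=0}^{\infty}2^{-k\delta}\,\sup\!\left\{\mathcal O_\alpha(f;Q'):\ Q'\subseteq 3Q,\ \ell(Q')=2^{-k}\ell(Q)\right\},
\]
while for $\alpha=0$ the John--Nirenberg inequality already furnishes $\mathcal O_{0,X}(f;Q)\le C\sup\{\mathcal O_0(f;Q'):Q'\subseteq Q\}$, which covers the small- and far-cube regimes, the large-cube regime then needing a separate argument. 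To prove the displayed estimate one does not bound higher-order differences of $f$ directly; instead one mollifies by an approximate identity $\phi_t$ at scale $t\approx\ell(Q)$ and writes
\[
f-P_Q^{\lfloor\alpha\rfloor}(f)=\big(f-\phi_t\!*\!f\big)+\big(\phi_t\!*\!f-P_Q^{\lfloor\alpha\rfloor}(\phi_t\!*\!f)\big)+P_Q^{\lfloor\alpha\rfloor}\!\big(\phi_t\!*\!f-f\big).
\]
The smooth middle term is controlled, via Taylor's formula, by the order-$\lceil\alpha\rceil$ derivatives of $\phi_t\!*\!f$, which scale like $t^{-\lceil\alpha\rceil}$ times the $\lceil\alpha\rceil$-th modulus of smoothness of $f$ at scale $t$; the latter, being measured by higher-order differences $\Delta_h^{\lceil\alpha\rceil}f$ with $|h|\lesssim t$, is dominated by $\mathcal O_\alpha(f;Q')$ over cubes near $Q$ of size $\lesssim\ell(Q)$. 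The first and third terms are error terms handled via the same difference bounds. Converting the resulting pointwise and distributional estimates into the $X$-norm $\|\cdot\,\mathbf 1_Q\|_X$ is where the boundedness of $M$ on $X'$ is used — through $\|\mathbf 1_E\|_X\lesssim(|E|/|Q|)^{\theta}\|\mathbf 1_Q\|_X$ for $E\subseteq Q$ and some $\theta\in(0,1)$, the mechanism behind \eqref{LX=L}.

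Granting this estimate, each reverse inclusion is bookkeeping against the series. If $f\in\mathrm V\mathcal L_\alpha$ and $\ell(Q)\le a$, then every $Q'$ in the series has $\ell(Q')\le a$, so $\mathcal O_{\alpha,X}(f;Q)\le C(1-2^{-\delta})^{-1}\sup_{\ell(Q')\le a}\mathcal O_\alpha(f;Q')\to0$ as $a\to0^+$. If $f\in\mathrm C\mathcal L_\alpha$, fix $\varepsilon>0$, choose $K$ with $C(1-2^{-\delta})^{-1}2^{-K\delta}\|f\|_{\mathcal L_\alpha}<\varepsilon$, and note that for $\ell(Q)$ large the leading scales $2^{-k}\ell(Q)$, $k\le K$, all exceed the large-cube threshold of $\mathrm C\mathcal L_\alpha$, so those terms are $<\varepsilon$ while the tail is $<\varepsilon$ by the choice of $K$. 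For $\mathrm Y=\mathrm X$ one splits the series at a suitable index and, for the finitely many intermediate scales, uses that $f\in\mathrm X\mathcal L_\alpha$ forces the \emph{uniform} far-vanishing $\sup\{\mathcal O_\alpha(f;Q'):Q'\subseteq 3(Q_0+z),\ b\le\ell(Q')\le\ell(Q_0)\}\to0$ as $z\to\infty$, which is not literally Definition~\ref{defBVCXMO}(iii) and must be extracted from it (together with uniform small-cube vanishing) by a compactness/semicontinuity argument on the parameter space of cubes of bounded size.

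I expect the heart of the matter to be the self-improving estimate, and specifically its \emph{scale concentration}. Dominating $\mathcal O_{\alpha,X}(f;Q)$ merely by $\sup\{\mathcal O_\alpha(f;Q'):Q'\subseteq CQ,\ \ell(Q')\le\ell(Q)\}$ is comparatively routine, but such a bound is useless for the large-cube regime: for $f\in\mathrm C\mathcal L_\alpha$ and $\ell(Q)\to\infty$ this supremum still sees arbitrarily small scales and need not vanish. Producing the damping factor $2^{-k\delta}$ — which is exactly what the convolution smoothing supplies when $\alpha>0$ — is the delicate point; for $\alpha=0$ no such polynomial decay exists, so the $\mathrm C$-part there instead hinges on a separate argument (for instance, identifying $\mathrm C\mathcal L_0$ and $\mathrm C\mathcal L_{0,X}$ as the closure of a common dense subspace and invoking the norm equivalence \eqref{LX=L}). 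A secondary technical point, already indicated, is upgrading the pointwise far-vanishing of Definition~\ref{defBVCXMO}(iii) to its uniform version.
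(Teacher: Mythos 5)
Your overall architecture agrees with the paper's: the elementary inclusion $\mathrm Y\mathcal L_{\alpha,X}\subseteq\mathrm Y\mathcal L_\alpha$ via Hölder plus the reverse Hölder $\|\mathbf 1_Q\|_X\|\mathbf 1_Q\|_{X'}\lesssim|Q|$ is exactly Lemma~\ref{lem-oscX}/\ref{lem-2diff}(iii), and your ``scale-concentrated'' self-improving estimate is the geometric-decay telescoping buried inside Lemma~\ref{lem:250509-1} (for $\alpha\in(0,1)$) and inside the mollified decomposition $f_0=u_0+u_1+u_2$ and \eqref{core} (for $\alpha=1$), made into a single displayed inequality. Your $\mathrm V$- and $\mathrm C$-bookkeeping against the series is sound, and for $\alpha=0$ you correctly recognize that the decay fails and that one must instead pass through the closure characterizations of $\mathrm{VMO},\,\mathrm{XMO},\,\mathrm{CMO}$ (Lemma~\ref{lem-vcxmo}) together with Proposition~\ref{bmox}, which is precisely what the paper does. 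In that sense you and the paper are taking essentially the same route, differing mainly in packaging (the paper goes through the intermediate pointwise-difference characterizations Propositions~\ref{prop-lip} and~\ref{prop-zyg}, you fold the mechanism into one estimate).

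The one place where I think you have a genuine gap is the $\mathrm X$-case, and I would not dismiss it as a secondary technical point. You note, correctly, that Definition~\ref{defBVCXMO}(iii) gives $\mathcal O_\alpha(f;Q+z)\to0$ only for each \emph{fixed} translated cube, whereas your series requires a \emph{uniform} far-vanishing bound over all subcubes $Q'\subseteq3(Q+z)$ of intermediate size $a_0\le\ell(Q')\le\ell(Q)$. Your proposed remedy is a ``compactness/semicontinuity argument on the parameter space of cubes of bounded size,'' but this does not work as stated: pointwise convergence of a family of continuous functions on a compact parameter set is not uniform convergence, and there is no obvious monotonicity or equicontinuity to invoke a Dini- or Arzel\`a--Ascoli-type upgrade. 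The paper avoids the issue entirely by a direct nested-cube comparison: for $Q'\subseteq 3(Q+z)$ with $\ell(Q')\ge a_0$,
\[
\mathcal O_\alpha(f;Q')\ \lesssim\ \left(\frac{|3(Q+z)|}{|Q'|}\right)^{1+\frac{\alpha}{n}}\mathcal O_\alpha\bigl(f;3(Q+z)\bigr)
\ \le\ \left(\frac{3\ell(Q)}{a_0}\right)^{n+\alpha}\mathcal O_\alpha\bigl(f;3(Q+z)\bigr),
\]
which tends to $0$ as $z\to\infty$ with a constant depending only on $a_0$ and $\ell(Q)$; combined with the uniform small-cube vanishing on the scales $\ell(Q')<a_0$, this is the dichotomy in \eqref{Q(z)}--\eqref{ii2} and its $\alpha=1$ analogue, and it replaces your compactness heuristic with a two-line estimate. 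You should substitute this for the compactness argument; with that change, and with the self-improving estimate actually carried out (your mollification sketch covers $\alpha\ge1$, while for $\alpha\in(0,1)$ the cleaner route is the dyadic telescoping that you do not mention but the paper uses in Lemma~\ref{lem:250509-1}), your proposal becomes a complete proof.
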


The proof of Theorem \ref{thm1} is given in Section \ref{s4}.
One may speculate that Theorem \ref{thm1} is an immediately consequence of \eqref{LX=L}.
However, that is not the case.
Indeed, the proof of \eqref{LX=L} is based not  on
$\mathcal{O}_{\alpha}(f;Q)\sim\mathcal{O}_{\alpha,X}(f;Q)$
but on a combination of both
$\mathcal{O}_{\alpha}(f;Q)\lesssim\mathcal{O}_{\alpha,X}(f;Q)
\lesssim\mathcal{O}_{\alpha,L^q}(f;Q)$ for some $q\in(1,\infty)$
and the self-improving property of Campanato spaces.
In this article, for the oscillation $\mathcal{O}_{\alpha,X}(f;Q)$,
we still use the above lower bound estimates
and establish some upper bound estimates in terms of (higher-order) differences;
see Lemmas \ref{lem-oscX} and \ref{lem-2diff}.
\begin{remark}\label{rem-2}
To the best of our knowledge, Theorem~\ref{thm1} is entirely new even in the special case $\alpha = 0$.
\begin{itemize}
\item[\rm (i)] The case $\alpha = 0$ in Theorem~\ref{thm1} can be proved
by using classical characterizations of vanishing BMO spaces,
specifically Lemma~\ref{lem-vcxmo}.

\item[\rm (ii)] The case $\alpha \in (0,1)$ is handled by establishing new characterizations of
$\mathrm{VMO}_\alpha$, $\mathrm{XMO}_\alpha$, and $\mathrm{CMO}_\alpha$ in terms of the pointwise difference $|f(x)-f(y)|$; see Proposition~\ref{prop-lip}.

\item[\rm (iii)] For $\alpha = 1$, we prove Theorem~\ref{thm1} by developing new characterizations of vanishing Campanato spaces using the second-order difference $|f(x+y)+f(x-y)-2f(x)|$; see Proposition~\ref{prop-zyg}.

\item[\rm (iv)] The argument for any $\alpha \in(1,\infty)$ is essentially similar to that
for any $\alpha \in (0,1]$, as noted in Remark~\ref{rem-1}(iv).
Indeed, the proof for the case $\alpha = 1$ can be adapted to cover the case $\alpha \in (1, \infty)$;
see, for example, \cite[pp.\,300-302, Lemma~5.18 and Theorem~5.22]{gr85}.
Therefore, we restrict our proof of Theorem~\ref{thm1} to the range $\alpha \in [0,1]$
and omit the case $\alpha \in(1,\infty)$.

\item[\rm (v)] If $\alpha<0$, then \eqref{LX=L} fails.
To show this, choose $X$ to be the Morrey space and see Remark \ref{rem-morrey}(ii). In this sense, the range $\alpha$
in Theorem \ref{thm1} is \emph{sharp}.
\end{itemize}
\end{remark}
Theorem \ref{thm1} reveals a more general self-improvement
phenomenon of vanishing Campanato spaces. In \eqref{bmo},
one can not only improve the integrability of
$f - P_Q^{\lfloor\alpha\rfloor}(f)$ from $q = 1$ to $q \in (1,\infty)$, but
also add a Muckenhoupt weight $w \in A_q$ in the integral
because $X$ can be chosen as the weighted space $L^q_w$
and $M$ is bounded on the associated space of $L^q_w$.
A key novelty of this article lies in using higher-order differences
to dominate oscillations.
Rather than directly estimating higher-order differences, the domination is achieved by
smoothing the function through the tool of convolution.
The earliest formulation of this convolution method can be traced back to
Garc\'ia-Cuerva and Rubio de Francia \cite{gr85}.
We modify their convolution technique to study the vanishing behaviors of
higher-order oscillations.
As byproducts, we also establish some new characterizations
of vanishing Campanato spaces in terms of (higher-order) differences;
see Propositions \ref{prop-lip} and \ref{prop-zyg}.

We next describe the structure of the remainder of this article.
Section \ref{s2} collects some elementary estimates on oscillations.
Section \ref{s3} characterizes vanishing Campanato spaces in terms of (higher-order) differences.
Section \ref{s4} proves Theorem \ref{thm1} in Section \ref{s1} and apply Theorem \ref{thm1}
to some specific function spaces.
Finally, in Appendix \ref{a}, we present several examples to show that
these vanishing Campanato spaces naturally arise in the study on
the compactness of fractional integral commutators in Morrey spaces.

We end this introduction by making some conventions on symbols.
Throughout this article,
let ${\mathbb Z}$ be the collection of all integers and ${\mathbb Z}_+:=\{0,1,\dots\}$.
We always use $C$ to denote a positive constant independent of the main parameters involved.
The symbol $f\lesssim g$ means $f\leq  Cg$
and, if $f\lesssim g\lesssim f$, we then write $f\sim g$.
Denote $\mathscr M$ by the set of all measurable functions on $\mathbb{R}^n$.
For any $p\in [1,\infty]$, let $p'$ be the number that satisfies $\frac1p+\frac1{p'}=1$.
Moreover, for any ball Banach function space $X$, let $X'$ be its associate space
(also called the K\"othe dual);
see Definitions \ref{def-bbfs} and \ref{def-X'} for their definitions.
For any function $g$ and any $x,\,h\in\mathbb{R}^n$,
its second order difference $\Delta^2_h g$
is defined by setting $\Delta^2_h g(x):=g(x+h)+g(x-h)-2g(x)$.
For any function $\phi$ and any $r\in(0,\infty)$, let $\phi_r(\cdot):=r^{-n}\phi(\frac\cdot{r})$.
For any $x\in\mathbb{R}^n$ and $r\in(0,\infty)$,
the ball $B(x,r):=\{y\in\mathbb{R}^n:\ |y-x|<r\}$.
The limit $\lim_{a \to 0^+}$ means that there exists $c_0\in(0,\infty)$
such that $a\in(0,c_0)$ and $a\to0$.
Finally, in all proofs, we consistently retain the symbols
introduced in the original theorem (or related statement).

\section{Oscillations via Ball Banach Function Spaces}\label{s2}

\begin{definition}\label{def-X'}
For any ball quasi-Banach function space $X$, the \emph{associate space} (also called the
\emph{K\"othe dual}) $X'$ of $X$ is defined by setting
\begin{equation*}
X':=\left\{f\in\mathscr M:\ \|f\|_{X'}:=
\sup_{{g\in X,\,\|g\|_X=1}}\|fg\|_{L^1}<\infty\right\},
\end{equation*}
where $\|\cdot\|_{X'}$ is called the \emph{associate norm} of $\|\cdot\|_X$.
\end{definition}

The following Lorentz--Luxemburg theorem can be found in the book
of Bennett and Sharpley \cite{bs}; see also \cite[Lemma 2.1]{is17}.
\begin{lemma}
Let $X$ be a ball Banach function space.
Then $X=(X')'$ hold and, in particular, the norms $\|\cdot\|_{X}$
and $\|\cdot\|_{(X')'}$ are equivalent.
\end{lemma}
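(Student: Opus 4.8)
The plan is to prove the two continuous inclusions $X\subseteq(X')'$ and $(X')'\subseteq X$ separately. The inclusion $X\subseteq(X')'$, together with the bound $\|\cdot\|_{(X')'}\le\|\cdot\|_X$, follows directly from Definition~\ref{def-X'}: unwinding the definition of $\|\cdot\|_{X'}$ gives the Hölder-type inequality $\|gh\|_{L^1}\le\|g\|_X\|h\|_{X'}$ for every $g\in X$ and $h\in\mathscr M$ (first when $\|g\|_X=1$, then by homogeneity), and taking the supremum over $h\in X'$ with $\|h\|_{X'}=1$ gives $\|g\|_{(X')'}\le\|g\|_X$ for all $g\in X$.

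For the reverse inclusion $(X')'\subseteq X$ I would argue in three steps. \emph{Reduction to simple functions.} Given $f\in(X')'$, replacing $f$ by $|f|$ changes neither $\|f\|_X$ nor $\|f\|_{(X')'}$, so I may assume $f\ge0$ and choose simple functions $0\le f_m\uparrow f$, each vanishing outside a bounded set; every such $f_m$ lies in $X$, since a simple function supported in a ball $B$ is dominated by a constant multiple of $\mathbf 1_B\in X$ (Definition~\ref{def-bbfs}(ii) and (iv)). As $f_m\le f$, we have $\|f_m\|_{(X')'}\le\|f\|_{(X')'}$, while the Fatou property Definition~\ref{def-bbfs}(iii) gives $\|f_m\|_X\uparrow\|f\|_X$; hence it suffices to prove $\|g\|_X\le\|g\|_{(X')'}$ for every nonnegative simple $g$ of bounded support. \emph{A norm-attaining functional.} Fixing such a $g$ (which we may take nonzero), the linear functional on $\mathrm{span}\{g\}$ sending $g$ to $\|g\|_X$ has norm $1$, so by the Hahn--Banach theorem there is $\Lambda\in X^{*}$ with $\|\Lambda\|_{X^{*}}=1$ and $\Lambda(g)=\|g\|_X$; in particular $|\Lambda(u)|\le\|u\|_X$ for every $u\in X$. \emph{Representation.} If $\Lambda$ is realized as $\Lambda(u)=\int_{\mathbb{R}^n}uh\,dx$ for some $h\in\mathscr M$, then a routine duality computation (testing $\Lambda$ against functions dominated by $|v|$) gives $\int_{\mathbb{R}^n}|vh|\,dx\le\|v\|_X$ for every $v\in X$, i.e.\ $\|h\|_{X'}\le1$, whence
\[
\|g\|_X=\Lambda(g)=\int_{\mathbb{R}^n}gh\,dx\le\|g\|_{(X')'}\|h\|_{X'}\le\|g\|_{(X')'}.
\]
Combining this with the first inclusion yields $X=(X')'$ with $\|\cdot\|_X=\|\cdot\|_{(X')'}$, in particular with equivalent norms.

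The main obstacle is the representation step. A general ball Banach function space need not have an order-continuous norm (for instance $L^\infty$), so the Hahn--Banach functional $\Lambda$ may carry a nonzero singular part and fail to be an integral functional; and one cannot circumvent this by replacing $\Lambda$ with a finite-dimensional Hahn--Banach functional built from the level sets $\{Q_j\}$ of $g$, because the averaging operator $u\mapsto\sum_j(\fint_{Q_j}|u|)\mathbf 1_{Q_j}$ need not be a contraction on $X$ (already on a weighted $L^2$ with a nonconstant weight), so the $h$ so produced may satisfy $\|h\|_{X'}>1$. Handling this — for example by exhausting $B$ with a refining sequence of finite partitions and passing to the limit, or by decomposing $X^{*}$ into its absolutely continuous and singular parts and controlling the former through the Fatou property — is precisely the content of the Lorentz--Luxemburg theorem as carried out in Bennett and Sharpley~\cite{bs} (see also \cite[Lemma~2.1]{is17}), from which the present statement is taken.
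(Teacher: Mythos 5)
The paper does not actually prove this lemma; it simply cites the Lorentz--Luxemburg theorem from Bennett and Sharpley \cite{bs} (and from \cite[Lemma 2.1]{is17} for the adaptation to the ball setting). Your write-up is therefore in the same position as the paper's: you prove the easy inclusion $X\subseteq(X')'$ with $\|\cdot\|_{(X')'}\le\|\cdot\|_X$ (correctly, by unwinding the definitions), correctly reduce the hard inclusion to nonnegative simple functions of bounded support via the Fatou property, and then candidly concede that you cannot close the representation step and defer to \cite{bs}. Your diagnosis of the obstruction is accurate and worth keeping in mind: a Hahn--Banach extension on all of $X$ may carry a singular part when $X$ lacks an order-continuous norm, and the naive patch of projecting onto the span of the level sets of $g$ fails because the conditional-expectation operator $u\mapsto\sum_j(\fint_{Q_j}u)\mathbf 1_{Q_j}$ is not a contraction on, say, $L^2_w$ with nonconstant $w$ (one has $(\fint_Q w)(\fint_Q w^{-1})\ge1$ with equality only for constant $w$). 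Closing this gap — by exhausting through refining finite partitions together with a Fatou-property limit, or by splitting $X^*$ into absolutely continuous and singular parts — is exactly the nontrivial content of the cited theorem, and neither you nor the paper reproduces it. So your proposal is consistent with the paper, but, like the paper, it is a citation rather than a self-contained proof; the one point you might add is that the ball setting (as opposed to the classical Banach function space setting of Bennett--Sharpley) is what the Izuki--Sawano reference is there to handle, since property (iv) of Definition~\ref{def-bbfs} is assumed only for balls.
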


Using this and Definitions \ref{def-bbfs} and \ref{def-X'},
we immediately obtain the following H\"older inequality of $X$.
\begin{lemma}\label{holder}
Let $X$ be a ball quasi-Banach function space with the associate space $X'$.
If $f\in X$ and $g\in X'$, then $fg$ is integrable and
\begin{equation*}
\int_{\mathbb{R}^n}|f(x)g(x)|\,dx\le \|f\|_X\|g\|_{X'}.
\end{equation*}
\end{lemma}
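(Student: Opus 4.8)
The plan is to obtain the inequality by directly unwinding the definition of the associate norm $\|\cdot\|_{X'}$ in Definition~\ref{def-X'}, using a simple normalization; no appeal to completeness or to the Lorentz--Luxemburg theorem is needed, which keeps the argument non-circular.

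First I would dispose of the degenerate case $\|f\|_X=0$: by property (i) of Definition~\ref{def-bbfs} this forces $f=0$ almost everywhere, hence $fg=0$ almost everywhere, and both sides of the asserted inequality vanish (recall that $g\in X'$ means $\|g\|_{X'}<\infty$, so the right-hand side is $0$ and not $0\cdot\infty$). Thus we may assume $\|f\|_X\in(0,\infty)$. Setting $h:=f/\|f\|_X$ and invoking the absolute homogeneity of the quasi-norm of $X$, we get $\|h\|_X=1$, so $h$ is an admissible competitor in the supremum defining $\|g\|_{X'}$. Consequently $\|hg\|_{L^1}\le\|g\|_{X'}$, and multiplying through by $\|f\|_X$ yields
\[
\int_{\mathbb{R}^n}|f(x)g(x)|\,dx=\|fg\|_{L^1}\le\|f\|_X\,\|g\|_{X'}<\infty,
\]
which in particular shows that $fg$ is integrable, as claimed.

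There is no substantial obstacle here: the statement is essentially a reformulation of the definition of $X'$. The only point requiring a little care is the normalization step, which needs $\|f\|_X$ to be both finite and nonzero — handled by the case split above — together with the fact that $\|\lambda f\|_X=|\lambda|\,\|f\|_X$ for scalars $\lambda$, valid since $X$ is a quasi-normed space.
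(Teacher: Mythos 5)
Your proof is correct and matches the argument the paper has in mind: Lemma~\ref{holder} is indeed an immediate unwinding of Definition~\ref{def-X'} after normalizing $f$. Your observation that the Lorentz--Luxemburg theorem is not actually needed here is accurate — the paper cites it together with the definitions, but only the definition of the associate norm (plus property (i) of Definition~\ref{def-bbfs} to handle $\|f\|_X=0$) is used for this one-sided Hölder inequality; Lorentz--Luxemburg would only enter if one wanted the reverse direction or the identity $X=(X')'$.
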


By \cite[Lemma 2.2 and Remark 2.3]{is17},
we have the partial converse of Lemma \ref{holder}.
\begin{lemma}\label{reholder}
Let $X$ be a ball quasi-Banach function space such that the Hardy--Littlewood maximal operator $M$ is bounded on
its associated space $X'$.
Then there exists some positive constant $C$ such that,
for any cube $Q$ in $\mathbb{R}^n$,
$\|\mathbf{1}_Q\|_X\|\mathbf{1}_Q\|_{X'}
\le C |Q|.$
\end{lemma}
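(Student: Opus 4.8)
The plan is to get the inequality from two ingredients already on the table: the Lorentz--Luxemburg identity $X=(X')'$ and the hypothesis that $M$ is bounded on $X'$. First I would fix a cube $Q\subset\mathbb{R}^n$, note that $\mathbf{1}_Q\in X'$ with $\|\mathbf{1}_Q\|_{X'}\in(0,\infty)$ (by the norm axioms in Definition~\ref{def-bbfs} applied to the ball Banach function space $X'$), and reduce the claim to an averaged estimate. Indeed, by the Lorentz--Luxemburg theorem $\|\mathbf{1}_Q\|_X\sim\|\mathbf{1}_Q\|_{(X')'}$, and by Definition~\ref{def-X'} applied to $X'$,
\[
\|\mathbf{1}_Q\|_{(X')'}=\sup_{g\in X',\,\|g\|_{X'}=1}\int_{\mathbb{R}^n}|\mathbf{1}_Q(y)\,g(y)|\,dy=\sup_{g\in X',\,\|g\|_{X'}=1}\int_Q|g(y)|\,dy .
\]
Hence it suffices to prove that there is a constant $C$, independent of $Q$ and $g$, such that $\int_Q|g(y)|\,dy\le C\,|Q|\,\|\mathbf{1}_Q\|_{X'}^{-1}$ for every $g\in X'$ with $\|g\|_{X'}=1$; multiplying through and taking the supremum over such $g$ then gives exactly $\|\mathbf{1}_Q\|_X\|\mathbf{1}_Q\|_{X'}\le C|Q|$.

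Next I would prove this averaged estimate through the \emph{self-improvement of the mean}, i.e.\ the pointwise domination of $\fint_Q|g|$ by $M(g)$ on $Q$. For every $x\in Q$ one has $Q\subset B(x,\sqrt{n}\,\ell(Q))$, so by the definition \eqref{Mf} of $M$,
\[
M(g)(x)\ge\frac{1}{|B(x,\sqrt{n}\,\ell(Q))|}\int_Q|g(y)|\,dy=\frac{1}{v_n n^{n/2}}\fint_Q|g(y)|\,dy ,
\]
where $v_n:=|B(0,1)|$; this yields the pointwise bound $\bigl(\fint_Q|g|\bigr)\mathbf{1}_Q\le v_n n^{n/2}\,M(g)$ almost everywhere on $\mathbb{R}^n$ (note $g\in X'\subset L^1_{\mathrm{loc}}$, so $M(g)$ is a.e.\ finite). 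Applying $\|\cdot\|_{X'}$, using the lattice property in Definition~\ref{def-bbfs}(ii) together with the assumed boundedness of $M$ on $X'$, I obtain
\[
\left(\fint_Q|g(y)|\,dy\right)\|\mathbf{1}_Q\|_{X'}\le v_n n^{n/2}\,\|M(g)\|_{X'}\le v_n n^{n/2}\,\|M\|_{X'\to X'}\,\|g\|_{X'}=v_n n^{n/2}\,\|M\|_{X'\to X'},
\]
which is precisely the required averaged estimate after multiplying by $|Q|$ and dividing by $\|\mathbf{1}_Q\|_{X'}$; this finishes the proof.

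I do not expect a genuine obstacle here: the whole content is the classical fact that $\fint_Q|g|$ is controlled pointwise on $Q$ by $M(g)$, and everything else is bookkeeping with the norm axioms and the duality $X=(X')'$. The only mild points to watch are the passage between the cube $Q$ in the statement and the ball averages defining $M$ in \eqref{Mf}, which costs merely the dimensional factor $n^{n/2}$ coming from $\ell\bigl(B(x,\sqrt{n}\,\ell(Q))\bigr)$ versus $\ell(Q)$, and the observation that $\|\mathbf{1}_Q\|_{X'}$ is finite and nonzero so that the division in the last step is legitimate. (For completeness one may also record the matching lower bound $|Q|\le\|\mathbf{1}_Q\|_X\|\mathbf{1}_Q\|_{X'}$, which is immediate from the H\"older inequality of Lemma~\ref{holder} applied with $f=g=\mathbf{1}_Q$, although it is not needed for the statement.)
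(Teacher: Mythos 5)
Your proof is correct. Since the paper does not reprove this lemma but simply cites Izuki--Sawano \cite{is17}, there is no in-paper argument to compare against, but your reasoning is exactly the standard maximal-function argument that reference uses: identify $\|\mathbf{1}_Q\|_X$ with $\|\mathbf{1}_Q\|_{(X')'}$ via Lorentz--Luxemburg, dominate $\bigl(\fint_Q|g|\bigr)\mathbf{1}_Q$ pointwise by a dimensional multiple of $M(g)$, take $X'$-norms, and close with the boundedness of $M$ on $X'$. One technical caveat worth flagging: the lemma is stated for a ball \emph{quasi}-Banach function space $X$, whereas the Lorentz--Luxemburg identity $X=(X')'$ as recorded in the paper (and the finiteness of $\|\mathbf{1}_Q\|_{X'}$ on which your division step relies) is formulated only for ball \emph{Banach} function spaces; your argument therefore literally establishes the Banach case, which is the only case invoked elsewhere in the article, and the quasi-Banach formulation is presumably what \cite[Remark~2.3]{is17} is cited to cover.
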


The following result shows that, under the assumption that $M$ is bounded on $X'$,
the non-zero constant function does not belong to $X$.
\begin{lemma}\label{const}
Let $X$ be a ball quasi-Banach function space such that
the Hardy--Littlewood maximal operator $M$ is bounded on
its associated space $X'$.
Then $\mathbf{1}_{\mathbb{R}^n}\notin X$.
\end{lemma}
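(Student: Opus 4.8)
The plan is to argue by contradiction: assume $\mathbf{1}_{\mathbb{R}^n}\in X$ and derive a contradiction with the boundedness of $M$ on $X'$. The first step will show that this assumption would force every nonnegative element of $X'$ to be globally integrable on $\mathbb{R}^n$. Indeed, for any cube $Q$ we have $\mathbf{1}_Q\le\mathbf{1}_{\mathbb{R}^n}$, so Definition \ref{def-bbfs}(ii) gives $\|\mathbf{1}_Q\|_X\le\|\mathbf{1}_{\mathbb{R}^n}\|_X=:A\in(0,\infty)$; combining this with Lemma \ref{holder} (recall $\mathbf{1}_Q\in X$ by Definition \ref{def-bbfs}(iv)), we get, for every $g\in X'$ with $g\ge0$,
\[
\int_Q g(x)\,dx=\int_{\mathbb{R}^n}g(x)\mathbf{1}_Q(x)\,dx\le\|\mathbf{1}_Q\|_X\,\|g\|_{X'}\le A\,\|g\|_{X'}.
\]
Letting $Q$ increase to $\mathbb{R}^n$ and using the monotone convergence theorem, we obtain $\int_{\mathbb{R}^n}g(x)\,dx\le A\|g\|_{X'}<\infty$.

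The second step produces an explicit element of $X'$ that violates this conclusion. Take $Q_0:=(0,1)^n$; then $\mathbf{1}_{Q_0}\in X'$, since for a ball Banach function space the local integrability bound $\int_{Q_0}|f|\,dx\le C_{(Q_0)}\|f\|_X$ immediately yields $\|\mathbf{1}_{Q_0}\|_{X'}\le C_{(Q_0)}<\infty$. Because $M$ is bounded on $X'$, the function $g:=M(\mathbf{1}_{Q_0})$ belongs to $X'$ and is nonnegative. On the other hand, for every $x\in\mathbb{R}^n$ the ball $B(x,|x|+\sqrt n)$ contains $Q_0$, so
\[
M(\mathbf{1}_{Q_0})(x)\ge\frac{|Q_0|}{|B(x,|x|+\sqrt n)|}=\frac{1}{|B(0,1)|\,(|x|+\sqrt n)^n}\gtrsim(1+|x|)^{-n},
\]
and hence $\int_{\mathbb{R}^n}M(\mathbf{1}_{Q_0})(x)\,dx=\infty$.

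These two steps contradict each other: applying the first step to $g:=M(\mathbf{1}_{Q_0})\in X'$ would give $\int_{\mathbb{R}^n}M(\mathbf{1}_{Q_0})(x)\,dx\le A\,\|M(\mathbf{1}_{Q_0})\|_{X'}<\infty$, which is impossible. Therefore $\mathbf{1}_{\mathbb{R}^n}\notin X$. I expect the only place needing a modicum of care to be the justification that $\mathbf{1}_{Q_0}\in X'$ (equivalently, that $X'$ is nontrivial and contains indicators of bounded sets); under the ball Banach function space hypothesis this is immediate as noted, and it can alternatively be deduced from the boundedness of $M$ on $X'$ together with the lattice property of $X'$ (since $M(h)\gtrsim\mathbf{1}_{B}$ for any nonzero $h\in X'$ and any fixed ball $B$). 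Everything else—H\"older's inequality, monotone convergence, and the elementary lower bound for the maximal function of an indicator—is routine.
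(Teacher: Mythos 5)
Your proof is correct and follows essentially the same route as the paper: both arguments hinge on the observation that $M$ of the indicator of a bounded set lies in $X'$ (by boundedness of $M$ on $X'$) but is pointwise $\gtrsim(1+|x|)^{-n}$ and hence not in $L^1$, so H\"older's inequality rules out $\mathbf{1}_{\mathbb{R}^n}\in X$. The only difference is cosmetic: the paper applies Lemma~\ref{holder} directly to the pair $\mathbf{1}_{\mathbb{R}^n}\in X$, $M(\mathbf{1}_{B(\mathbf{0},1)})\in X'$ in a single line, whereas you detour through indicators of cubes and monotone convergence before reaching the same contradiction.
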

\begin{proof}
By Definition \ref{def-bbfs}(iv),
the boundedness of $M$ on $X'$, Lemma \ref{holder}, and
the fact $M(\mathbf{1}_{B(\mathbf{0},1)})\notin L^1$,
we obtain
\begin{align*}
\|\mathbf{1}_{\mathbb{R}^n}\|_X \|M(\mathbf{1}_{B(\mathbf{0},1)})\|_{X'}
\ge\int_{\mathbb{R}^n} \mathbf{1}_{\mathbb{R}^n}(x)M(\mathbf{1}_{B(\mathbf{0},1)})(x)\,dx
\gtrsim\int_{\mathbb{R}^n} \frac{1}{(1+|x|)^n}\,dx=\infty,
\end{align*}
proving $\mathbf{1}_{\mathbb{R}^n}\notin X$.
\end{proof}

We now establish a connection between the oscillation and the pointwise difference.

\begin{lemma}\label{lem-oscX}
Let $\alpha \in [0,1)$ and $X$ be a ball Banach function space.
Then there exists a positive constant $C_1$, depending only on $n$,
such that,
for any cube $Q$ in $\mathbb{R}^n$ and any $f \in L^1_{\mathrm{loc}}$,
the following inequality holds:
\begin{equation}\label{eq:250419-1}
\mathcal{O}_{\alpha, X}(f; Q) \le C_1 \sup_{{x, y \in Q,\, x \neq y}} \frac{|f(x) - f(y)|}{|x - y|^\alpha}.
\end{equation}

If the Hardy--Littlewood maximal operator $M$ is bounded
on the associate space $X'$, then there exists a positive constant $C_2$,
depending only on $n$ and $X$, such that,
for any cube $Q$ in $\mathbb{R}^n$ and any $f\in L^1_{\mathrm{loc}}$,
\begin{equation}\label{eq:250419-2}
\mathcal{O}_{\alpha}(f; Q)
:= |Q|^{-\frac{\alpha}{n}} \fint_Q |f(x) - \langle f \rangle_Q|\,dx
\le C_2 \mathcal{O}_{\alpha, X}(f; Q).
\end{equation}
\end{lemma}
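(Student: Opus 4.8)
The plan is to treat the two displayed inequalities separately, each reducing to a short computation once the appropriate tool from Section~\ref{s2} is invoked. Since $\alpha\in[0,1)$ we have $\lfloor\alpha\rfloor=0$, so $P_Q^{\lfloor\alpha\rfloor}(f)=\langle f\rangle_Q$ throughout, and consequently both $\mathcal{O}_{\alpha,X}(f;Q)$ and the quantity $\mathcal{O}_\alpha(f;Q)$ in \eqref{eq:250419-2} are built from the single function $[f-\langle f\rangle_Q]\mathbf{1}_Q$. For \eqref{eq:250419-1}, set $A:=\sup_{x,y\in Q,\,x\neq y}|f(x)-f(y)|/|x-y|^\alpha$ (we may assume $A<\infty$, otherwise there is nothing to prove). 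Writing $f(x)-\langle f\rangle_Q=\fint_Q[f(x)-f(y)]\,dy$ and using the elementary geometric bound $|x-y|\le\sqrt n\,\ell(Q)=\sqrt n\,|Q|^{1/n}$ valid for all $x,y\in Q$, I obtain the pointwise estimate $|f(x)-\langle f\rangle_Q|\le A\,n^{\alpha/2}|Q|^{\alpha/n}$ for every $x\in Q$, hence $|[f-\langle f\rangle_Q]\mathbf{1}_Q|\le A\,n^{\alpha/2}|Q|^{\alpha/n}\mathbf{1}_Q$ pointwise. The lattice property of $X$ (Definition~\ref{def-bbfs}(ii)) then gives $\|[f-\langle f\rangle_Q]\mathbf{1}_Q\|_X\le A\,n^{\alpha/2}|Q|^{\alpha/n}\|\mathbf{1}_Q\|_X$, and dividing by $|Q|^{\alpha/n}\|\mathbf{1}_Q\|_X$ yields \eqref{eq:250419-1} with $C_1:=\sqrt n$ (note $n^{\alpha/2}\le\sqrt n$ since $\alpha<1$ and $n\ge1$), which depends only on $n$.

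For \eqref{eq:250419-2}, I would start from $\mathcal{O}_\alpha(f;Q)=|Q|^{-\alpha/n}|Q|^{-1}\int_Q|f(x)-\langle f\rangle_Q|\,dx$ and apply the H\"older inequality for $X$ (Lemma~\ref{holder}) to the pair $[f-\langle f\rangle_Q]\mathbf{1}_Q\in X$ and $\mathbf{1}_Q\in X'$, obtaining $\int_Q|f-\langle f\rangle_Q|\,dx\le\|[f-\langle f\rangle_Q]\mathbf{1}_Q\|_X\,\|\mathbf{1}_Q\|_{X'}$. The key input is then Lemma~\ref{reholder}, which—precisely under the hypothesis that $M$ is bounded on $X'$—gives $\|\mathbf{1}_Q\|_X\|\mathbf{1}_Q\|_{X'}\le C|Q|$, hence $\|\mathbf{1}_Q\|_{X'}\le C|Q|/\|\mathbf{1}_Q\|_X$ with $C$ depending only on $n$ and $X$ (here $\|\mathbf{1}_Q\|_X\neq0$ by Definition~\ref{def-bbfs}(i) and (iv)). Substituting this bound, the factors of $|Q|$ cancel and I land on $\mathcal{O}_\alpha(f;Q)\le C\,|Q|^{-\alpha/n}\|\mathbf{1}_Q\|_X^{-1}\|[f-\langle f\rangle_Q]\mathbf{1}_Q\|_X=C\,\mathcal{O}_{\alpha,X}(f;Q)$, so one may take $C_2:=C$.

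Neither step presents a genuine obstacle: the first is a pointwise-domination argument using only the order structure of $X$, while the second reduces entirely to the uniform comparison $\|\mathbf{1}_Q\|_X\|\mathbf{1}_Q\|_{X'}\lesssim|Q|$ of Lemma~\ref{reholder}, which is the only place the maximal-function hypothesis enters. The minor points to keep straight are: to note that if $[f-\langle f\rangle_Q]\mathbf{1}_Q\notin X$ then both inequalities hold trivially (the left-hand sides being $\infty$ and finite, respectively, or both sides being $\infty$), so one may assume this function lies in $X$ when applying Lemma~\ref{holder}; and to record that $C_1$ can be chosen independent of $\alpha\in[0,1)$ whereas $C_2$ genuinely depends on $X$ through the constant in Lemma~\ref{reholder}.
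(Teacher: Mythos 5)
Your proof is correct and follows essentially the same route as the paper's: for \eqref{eq:250419-1} you dominate $|f(x)-\langle f\rangle_Q|$ pointwise on $Q$ by the H\"older-type seminorm times $\mathrm{diam}(Q)^\alpha\sim|Q|^{\alpha/n}$ and invoke the lattice property of $X$, and for \eqref{eq:250419-2} you combine Lemma~\ref{holder} (H\"older) with Lemma~\ref{reholder} (reverse H\"older under the hypothesis that $M$ is bounded on $X'$). The only cosmetic difference is that you track the explicit constant $n^{\alpha/2}\le\sqrt n$ and note the degenerate case $[f-\langle f\rangle_Q]\mathbf{1}_Q\notin X$, neither of which changes the argument.
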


\begin{proof}
By the definition of $\mathcal{O}_{\alpha, X}(f; Q)$, we have
\[
\mathcal{O}_{\alpha, X}(f; Q)
= \frac{|Q|^{-\frac{\alpha}{n}}}{\|\mathbf{1}_Q\|_X} \left\|\left(f - \langle f \rangle_Q\right)\mathbf{1}_Q\right\|_X
= \frac{|Q|^{-\frac{\alpha}{n}}}{\|\mathbf{1}_Q\|_X} \left\|\fint_Q [f(\cdot) - f(y)]\,dy \, \mathbf{1}_Q\right\|_X.
\]
Using the deifition of the Lipschitz norm, we further obtain
\begin{align*}
\mathcal{O}_{\alpha, X}(f; Q)
&\le \frac{|Q|^{-\frac{\alpha}{n}}}{\|\mathbf{1}_Q\|_X} \left\|\sup_{x,\,y \in Q} |f(x) - f(y)| \cdot \mathbf{1}_Q\right\|_X\\
&= |Q|^{-\frac{\alpha}{n}} \sup_{x,\,y \in Q} |f(x) - f(y)|
\lesssim \sup_{\genfrac{}{}{0pt}{}{x,\,y \in Q}{x \neq y}} \frac{|f(x) - f(y)|}{|x - y|^\alpha},
\end{align*}
which proves \eqref{eq:250419-1}.

To prove \eqref{eq:250419-2}, assume that $M$ is bounded on $X'$. Then, from the H\"older inequality for Banach function spaces and its reverse (cf. Lemmas~\ref{holder} and~\ref{reholder}), it follows that
\begin{align*}
|Q|^{-\frac{\alpha}{n}} \fint_Q |f(x) - \langle f \rangle_Q|\,dx
&\le |Q|^{-\frac{\alpha}{n}} \frac{1}{|Q|} \|\mathbf{1}_Q\|_{X'} \left\|\left(f - \langle f \rangle_Q\right) \mathbf{1}_Q\right\|_X \\
&\lesssim |Q|^{-\frac{\alpha}{n}} \frac{1}{\|\mathbf{1}_Q\|_X} \left\|\left(f - \langle f \rangle_Q\right) \mathbf{1}_Q\right\|_X
= \mathcal{O}_{\alpha, X}(f; Q),
\end{align*}
which completes the proof of \eqref{eq:250419-2}
and hence Lemma \ref{lem-oscX}.
\end{proof}
Next, we present the higher-order version of Lemma~\ref{lem-oscX}.
In what follows, for any function $g$ and any $x, h \in \mathbb{R}^n$,
the \emph{second-order difference} $\Delta^2_h g(x)$ is defined by setting
\[
\Delta^2_h g(x) := g(x + h) + g(x - h) - 2g(x).
\]

\begin{lemma}\label{lem-2diff}
Let $X$ be a ball Banach function space, $f \in L^1_{\mathrm{loc}}$,
and $Q$ be a cube in $\mathbb{R}^n$ centered at $x_0$ with edge length $2r\in(0,\infty)$.
Then the following statements hold.
\begin{itemize}
\item[$({\rm i})$] There exists a polynomial $P_1$ of degree $1$ and a positive constant $C_1$, depending only on $n$, such that, for any $x \in Q$,
\[
|f(x) - P_1(x)| \le C_1 r \sup_{0 < |y| \leq r} \frac{|\Delta^2_y f(x)|}{|y|}.
\]

\item[$({\rm ii})$] There exists a positive constant $C_2$, depending only on $n$, such that
\[
\mathcal{O}_{1, X}(f; Q) \leq C_2 \sup_{{x \in Q},\,{0 < |y| \leq r}} \frac{|\Delta^2_y f(x)|}{|y|}.
\]

\item[$({\rm iii})$] If the Hardy--Littlewood maximal operator $M$ is bounded on the associate space $X'$, then there exists a positive constant $C_3$,
depending only on $n$ and $X$, such that
\[
\mathcal{O}_1(f; Q) := |Q|^{-1/n} \fint_Q \left| f(x) - P^1_Q(f)(x) \right| \, dx
\leq C_3 \mathcal{O}_{1, X}(f; Q).
\]
\end{itemize}
\end{lemma}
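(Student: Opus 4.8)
The plan is to prove the three parts in sequence, with part (i) doing the main analytic work and parts (ii) and (iii) following by the same mechanism as in the proof of Lemma~\ref{lem-oscX}. First I would prove (i) by the standard device of controlling a function by its second-order differences. Fix the cube $Q$ centered at $x_0$ with edge length $2r$, so $Q\subset B(x_0, \sqrt n\, r)$. The key identity is that for a function $g$ and points $a,b$, one has $g(a)-g(b)-\nabla g(b)\cdot(a-b)$ expressible through second-order differences along the segment from $b$ to $a$; since $f$ need not be differentiable, I would instead work directly with the telescoping/dyadic decomposition. Concretely, take $P_1$ to be the first-order Taylor-type polynomial of $f$ based at $x_0$; for $x\in Q$ write $f(x)-P_1(x)$ via a dyadic telescoping sum $\sum_{k\ge 0}\Delta^2_{2^{-k}(x-x_0)/2}f(\,\cdot\,)$-type terms, where each step halves the distance to $x_0$, and each increment is bounded by $|\Delta^2_y f(\xi)|$ for suitable $\xi\in Q$ and $|y|\le r$. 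Summing the geometric series in $2^{-k}$ and using $|x-x_0|\lesssim r$ yields $|f(x)-P_1(x)|\le C_1 r\sup_{0<|y|\le r}|\Delta^2_y f(x)|/|y|$, with $C_1$ depending only on $n$. (One subtlety: defining $P_1$ requires knowing the relevant first-order coefficient exists; this is handled by the same telescoping argument, which shows the series of increments converges, forcing the ``derivative at $x_0$'' along each direction to exist in the required averaged sense — this is the standard Zygmund-class argument.)

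For part (ii), I would feed the pointwise bound from (i) into the definition of $\mathcal{O}_{1,X}(f;Q)$. Since $P_1$ has degree at most $1=\lfloor 1\rfloor$, the minimality property \eqref{PQsf} of $P_Q^{\lfloor 1\rfloor}(f)$ gives
\[
\left\|\left[f-P_Q^{1}(f)\right]\mathbf 1_Q\right\|_X\le \left\|\left[f-P_1\right]\mathbf 1_Q\right\|_X + \left\|\left[P_1-P_Q^1(f)\right]\mathbf 1_Q\right\|_X,
\]
and the second term is controlled because $P_1-P_Q^1(f)$ is a degree-$1$ polynomial whose $L^1(Q)$-average is dominated by $\fint_Q|f-P_1|$ (using \eqref{Cs} with $s=1$ applied to $f-P_1$, whose minimal polynomial is $P_Q^1(f)-P_1$), hence pointwise on $Q$ by $C_{(1)}\fint_Q|f-P_1|$. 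Thus $\|[f-P_Q^1(f)]\mathbf 1_Q\|_X\lesssim \|[f-P_1]\mathbf 1_Q\|_X$ by property (ii) of Definition~\ref{def-bbfs} and the triangle inequality. Then by monotonicity of the $X$-norm and part (i),
\[
\left\|\left[f-P_1\right]\mathbf 1_Q\right\|_X\le C_1 r\Bigl(\sup_{x\in Q,\,0<|y|\le r}\frac{|\Delta^2_y f(x)|}{|y|}\Bigr)\|\mathbf 1_Q\|_X.
\]
Dividing by $\|\mathbf 1_Q\|_X$ and multiplying by $|Q|^{-1/n}$ absorbs the factor $r\sim|Q|^{1/n}$, giving (ii) with $C_2$ depending only on $n$.

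For part (iii), I would repeat the argument of \eqref{eq:250419-2} verbatim with $\langle f\rangle_Q$ replaced by $P_Q^1(f)$: by Lemma~\ref{holder} and Lemma~\ref{reholder},
\[
|Q|^{-1/n}\fint_Q\left|f(x)-P_Q^1(f)(x)\right|dx\le \frac{|Q|^{-1/n}}{|Q|}\|\mathbf 1_Q\|_{X'}\left\|\left[f-P_Q^1(f)\right]\mathbf 1_Q\right\|_X\lesssim \frac{|Q|^{-1/n}}{\|\mathbf 1_Q\|_X}\left\|\left[f-P_Q^1(f)\right]\mathbf 1_Q\right\|_X=\mathcal{O}_{1,X}(f;Q),
\]
where the implicit constant depends on $n$ and $X$ through the boundedness of $M$ on $X'$. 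I expect the main obstacle to be part (i): making the dyadic telescoping rigorous without assuming any smoothness of $f$, in particular justifying that the increments are genuinely of the form $\Delta^2_y f$ evaluated at a point of $Q$ with $|y|\le r$ (so that the supremum on the right-hand side is the correct one and the constant depends only on $n$), and handling the existence of the linear part $P_1$. Once (i) is in place, parts (ii) and (iii) are essentially bookkeeping, parallel to the already-proven Lemma~\ref{lem-oscX}.
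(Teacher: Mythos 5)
The route you take for parts (ii) and (iii) matches the paper exactly (triangle inequality with $P_1$, \eqref{Cs} with $s=1$ applied to $P^1_Q(P_1-f)$, and the Lemma~\ref{lem-oscX} argument verbatim for (iii)), and both are fine. The problem is in part (i): your plan hinges on defining $P_1$ as a ``first-order Taylor-type polynomial of $f$ based at $x_0$'' and on the claim that the dyadic telescoping ``forces the derivative at $x_0$ to exist.'' This is false. If you carry out the telescoping in one dimension with dyadic midpoints $m_k = x_0 + 2^{-k}(x-x_0)$ and set $a_k := 2^{k}[f(m_k)-f(x_0)]/|x-x_0|$, the identity you get is $a_{k-1}-a_k = 2^{k-1}\Delta^2_{2^{-k}(x-x_0)}f(m_k)/|x-x_0|$, so the second-difference bound only gives $|a_{k-1}-a_k|\lesssim \sup_{|y|\le r}|\Delta^2_y f|/|y|$, a \emph{bounded} sequence of increments — it does not sum. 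Indeed the Zygmund class $\Lambda_*$, which is precisely the set where $\sup_{x,y}|\Delta^2_y f(x)|/|y|<\infty$, contains nowhere-differentiable functions (the Weierstrass-type example $\sum_n 2^{-n}\cos(2^n x)$), so no first-order Taylor polynomial of $f$ at $x_0$ exists, and no ``standard Zygmund-class argument'' rescues it.

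The paper avoids this by mollifying first: it sets $\psi:=f*\phi_r$ with $\phi$ an \emph{even} bump of integral one, takes $P_1$ to be the degree-$1$ Taylor polynomial of the \emph{smooth} function $\psi$ at $x_0$, and then estimates $|f-\psi|$ and $|\psi - P_1|$ separately. The evenness of $\phi$ is used twice: once to write $f(x)-\psi(x)=-\tfrac12\int\phi_r(y)\Delta^2_y f(x)\,dy$, and once to note that for $|\gamma|=2$ the kernel $D^\gamma(\phi_r)$ is even with mean zero, so $2D^\gamma\psi(x)=\int D^\gamma(\phi_r)(y)\Delta^2_y f(x)\,dy$, giving $|D^\gamma\psi|\lesssim r^{-1}\sup_{0<|y|\le r}|\Delta^2_y f(x)|/|y|$; Taylor's theorem then controls $\psi - P_1$. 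That convolution device is the essential content of the lemma, and it is missing from your outline. Once (i) is proved this way, your parts (ii) and (iii) go through unchanged.
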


\begin{proof}
We begin with showing (i).
Let $\phi \in C^\infty$ be an even function supported in $B(\mathbf{0}, 1)$
with $\int_{\mathbb{R}^n} \phi(x) \, dx = 1$.
For any given $r\in(0,\infty)$, define $\phi_r(x) := r^{-n} \phi(x/r)$ for any $x\in\mathbb{R}^n$.
Then, for any multi-index $\gamma \in \mathbb{Z}_+^n$ with $|\gamma| = 2$,
we have $D^\gamma(\phi_r) = r^{-2}(D^\gamma \phi)_r$, which is even and has integral zero.
A direct consequence of this fact is that, for any $x\in\mathbb{R}^n$,
\begin{align*}
2 D^\gamma \psi(x)
&=2 D^\gamma(f * \phi_r)(x)
=2 f * D^\gamma(\phi_r)(x)\\
&= \int_{\mathbb{R}^n} D^\gamma(\phi_r)(y) [f(x - y) + f(x + y) - 2f(x)] \, dy .
\end{align*}

Let $Q$ be a cube centered at $x_0$ with edge length $2r$, and define $\psi := f * \phi_r$.
Then, for any $x \in \mathbb{R}^n$,
\begin{align}\label{Dpsi}
|2 D^\gamma \psi(x)|
\leq \int_{B(\mathbf{0}, r)} \frac{|y|}{r^2} |(D^\gamma \phi)_r(y)| \sup_{0 < |z| \leq r} \frac{|\Delta^2_z f(x)|}{|z|} \, dy
\sim r^{-1} \sup_{0 < |y| \leq r} \frac{|\Delta^2_y f(x)|}{|y|}.
\end{align}

Let $P_1$ be the Taylor polynomial of $\psi$ at $x_0$ of degree $1$.
By the Taylor theorem and \eqref{Dpsi}, for any $x \in Q$,
\begin{equation}\label{psi-P1}
|\psi(x) - P_1(x)| \lesssim r^{-1} \sup_{0 < |y| \leq r} \frac{|\Delta^2_y f(x)|}{|y|} |x - x_0|^2
\lesssim r \sup_{0 < |y| \leq r} \frac{|\Delta^2_y f(x)|}{|y|}.
\end{equation}

Since $\phi_r$ is even, we also obtain, for any $x \in \mathbb{R}^n$,
\begin{align*}
f(x) - \psi(x)
&=f(x) - \int_{\mathbb{R}^n} \phi_r(y) f(x - y) \, dy\\
&= -\frac{1}{2} \int_{\mathbb{R}^n} \phi_r(y) [f(x - y) + f(x + y) - 2f(x)] \, dy
\end{align*}
and hence
\begin{align*}
|f(x) - \psi(x)|
\lesssim \int_{\mathbb{R}^n} |\phi_r(y)| \sup_{0 < |z| \leq r} \frac{|\Delta^2_z f(x)|}{|z|} |y| \, dy
\lesssim r \sup_{0 < |y| \leq r} \frac{|\Delta^2_y f(x)|}{|y|}.
\end{align*}

Combining this with \eqref{psi-P1}, we find that, for any $x \in Q$,
\[
|f(x) - P_1(x)| \lesssim r \sup_{0 < |y| \leq r} \frac{|\Delta^2_y f(x)|}{|y|},
\]
which proves (i).

To prove (ii), let $P_1$ be as in (i). Then
\begin{align*}
\left\| [f - P^1_Q(f)] \mathbf{1}_Q \right\|_X
&\leq \left\| [f - P_1] \mathbf{1}_Q \right\|_X + \left\| [P^1_Q(P_1 - f)] \mathbf{1}_Q \right\|_X \\
&\lesssim |Q|^{1/n} \sup_{\genfrac{}{}{0pt}{}{x \in Q}{0 < |y| \leq r}} \frac{|\Delta^2_y f(x)|}{|y|} \|\mathbf{1}_Q\|_X
+ \left\| \fint_Q |f(x) - P_1(x)| \, dx \, \mathbf{1}_Q \right\|_X \\
&\lesssim |Q|^{1/n} \sup_{\genfrac{}{}{0pt}{}{x \in Q}{0 < |y| \leq r}} \frac{|\Delta^2_y f(x)|}{|y|} \|\mathbf{1}_Q\|_X.
\end{align*}
Therefore,
\[
\mathcal{O}_{1, X}(f; Q) = \frac{|Q|^{-1/n}}{\|\mathbf{1}_Q\|_X} \left\| [f - P^1_Q(f)] \mathbf{1}_Q \right\|_X
\lesssim \sup_{\genfrac{}{}{0pt}{}{x \in Q}{0 < |y| \leq r}} \frac{|\Delta^2_y f(x)|}{|y|},
\]
which completes the proof of (ii).

The proof of (iii) follows the same reasoning as in Lemma~\ref{lem-oscX}, and is thus omitted.
\end{proof}

\section{Characterizations of Vanishing Campanato Spaces}\label{s3}

The equivalence \eqref{LX=L} with $\alpha = 0$ was first established by Ho~\cite{h12}
as a byproduct of the atomic decomposition via Banach function spaces.
Later, Izuki~\cite[Theorem 3.1]{i16} provided the other simple proof
by employing the Rubio de Francia algorithm; see also~\cite{cm18, cw17, c16, ins19, n23}
for further applications of this powerful technique.

The following characterization for any $\alpha \in [0,1)$ is stated in \eqref{LX=L} and,
as pointed out in~\cite{is17}, the remaining case $\alpha \in [1,\infty)$ follows from
\cite[Theorem 1.2]{is17} and \cite[p.~292]{gr85}. Therefore, we omit the details here.

\begin{proposition}\label{bmox}
Let $\alpha \in [0, \infty)$ and $X$ be a ball Banach function space such that the Hardy--Littlewood maximal operator $M$ is bounded on its associate space $X'$. Then
$\mathcal{L}_{\alpha,X} = \mathcal{L}_{\alpha}$
in the sense of equivalent norms.
\end{proposition}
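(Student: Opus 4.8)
The plan is to reduce the claimed identity to the equivalence of seminorms $\|f\|_{\mathcal{L}_\alpha}\sim\|f\|_{\mathcal{L}_{\alpha,X}}$ and to establish the latter through an intermediate Lebesgue exponent (the same strategy is outlined after Theorem~\ref{thm1}). Concretely, I would show that there is an exponent $q=q(n,X)\in(1,\infty)$ such that, for every cube $Q$ and every $f\in L^1_{\mathrm{loc}}$,
\begin{equation*}
\mathcal{O}_{\alpha}(f;Q)\lesssim\mathcal{O}_{\alpha,X}(f;Q)\lesssim\mathcal{O}_{\alpha,L^{q}}(f;Q),
\end{equation*}
and then invoke the self-improving (John--Nirenberg type) property of Campanato spaces in the global form $\sup_Q\mathcal{O}_{\alpha,L^{q}}(f;Q)\lesssim\sup_Q\mathcal{O}_{\alpha}(f;Q)$. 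Taking the supremum over all cubes in the chain and combining with the self-improvement gives $\|f\|_{\mathcal{L}_\alpha}\lesssim\|f\|_{\mathcal{L}_{\alpha,X}}\lesssim\|f\|_{\mathcal{L}_\alpha}$; since membership in $\mathcal{L}_\alpha$ (resp.\ $\mathcal{L}_{\alpha,X}$) amounts precisely to the finiteness of the corresponding seminorm, this equivalence already forces the two spaces to coincide as sets.

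The left-hand inequality is the easy direction and is exactly the computation performed in Lemmas~\ref{lem-oscX} and~\ref{lem-2diff}: by the H\"older inequality for $X$ (Lemma~\ref{holder}) and then its reverse form (Lemma~\ref{reholder}),
\begin{equation*}
\int_Q\left|f-P_Q^{\lfloor\alpha\rfloor}(f)\right|\,dx\le\left\|\mathbf{1}_Q\right\|_{X'}\left\|\left[f-P_Q^{\lfloor\alpha\rfloor}(f)\right]\mathbf{1}_Q\right\|_X\lesssim\frac{|Q|}{\left\|\mathbf{1}_Q\right\|_X}\left\|\left[f-P_Q^{\lfloor\alpha\rfloor}(f)\right]\mathbf{1}_Q\right\|_X,
\end{equation*}
and dividing by $|Q|^{1+\alpha/n}$ yields $\mathcal{O}_\alpha(f;Q)\lesssim\mathcal{O}_{\alpha,X}(f;Q)$; the same polynomial $P_Q^{\lfloor\alpha\rfloor}(f)$ sits on both sides, so not even \eqref{Cs} is needed here. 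In particular this gives $\mathcal{L}_{\alpha,X}\subset\mathcal{L}_\alpha$ with $\|f\|_{\mathcal{L}_\alpha}\lesssim\|f\|_{\mathcal{L}_{\alpha,X}}$.

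The right-hand inequality is where the hypothesis that $M$ is bounded on $X'$ is used. Via the Rubio de Francia iteration algorithm, each $h\in X'$ admits a majorant $Rh\ge|h|$ with $\|Rh\|_{X'}\le2\|h\|_{X'}$ and $[Rh]_{A_1}\lesssim\|M\|_{X'\to X'}$; by the reverse H\"older self-improvement of $A_1$ weights there is then an exponent $q=q(n,X)\in(1,\infty)$, independent of $h$ and $Q$, with $(\fint_Q(Rh)^{q'}\,dx)^{1/q'}\lesssim\fint_Q Rh\,dx$. Combining this with Lemmas~\ref{holder} and~\ref{reholder} gives, for all cubes $Q$ and all $h\in X'$ with $\|h\|_{X'}\le1$,
\begin{equation*}
\left(\fint_Q|h|^{q'}\,dx\right)^{1/q'}\lesssim\frac{1}{|Q|}\int_Q Rh\,dx\le\frac{\|\mathbf{1}_Q\|_X\,\|Rh\|_{X'}}{|Q|}\lesssim\frac{\|\mathbf{1}_Q\|_X}{|Q|}.
\end{equation*}
Expressing $\|[f-P_Q^{\lfloor\alpha\rfloor}(f)]\mathbf{1}_Q\|_X$ through the duality $\|g\|_X\sim\sup_{\|h\|_{X'}\le1}\int|gh|$, applying H\"older on $Q$ with exponents $q$ and $q'$, and inserting the last bound, I obtain $\|[f-P_Q^{\lfloor\alpha\rfloor}(f)]\mathbf{1}_Q\|_X\lesssim\|\mathbf{1}_Q\|_X(\fint_Q|f-P_Q^{\lfloor\alpha\rfloor}(f)|^{q}\,dx)^{1/q}$, which is precisely $\mathcal{O}_{\alpha,X}(f;Q)\lesssim\mathcal{O}_{\alpha,L^{q}}(f;Q)$ after multiplying by $|Q|^{-\alpha/n}/\|\mathbf{1}_Q\|_X$.

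Finally, the self-improvement $\sup_Q\mathcal{O}_{\alpha,L^{q}}(f;Q)\lesssim\sup_Q\mathcal{O}_\alpha(f;Q)$ is the classical John--Nirenberg inequality for $\alpha=0$ and its higher-order Campanato version for $\alpha\ge1$, which I would simply quote from \cite[Theorem~1.2]{is17} and \cite[p.\,292]{gr85}. I expect this self-improvement step to be the only genuinely non-routine ingredient; everything else is the standard duality theory of ball Banach function spaces together with the Rubio de Francia extrapolation machinery, the sole mild technicality being the bookkeeping with the polynomials $P_Q^{\lfloor\alpha\rfloor}(f)$ for $\alpha\ge1$, which is handled exactly as in the $L^q$-theory of Campanato spaces.
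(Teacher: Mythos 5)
Your proposal is correct and follows exactly the strategy the paper attributes to \cite{is17} and summarizes in the introduction (the two-sided comparison $\mathcal{O}_\alpha\lesssim\mathcal{O}_{\alpha,X}\lesssim\mathcal{O}_{\alpha,L^q}$ plus the John--Nirenberg/Campanato self-improvement), which the paper itself cites rather than reproves. You supply the Rubio de Francia details that the paper omits; the only cosmetic slip is that in the display bounding $(\fint_Q|h|^{q'})^{1/q'}$ you only use Lemma~\ref{holder}, not Lemma~\ref{reholder}, but this has no bearing on correctness.
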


The following lemma summarizes key results from~\cite{s75, txyyJFAA, u78}.

\begin{lemma}\label{lem-vcxmo}
\begin{itemize}
  \item[\rm (i)] Let $C_{\mathrm{u}}$ denote the set of all uniformly continuous functions on $\mathbb{R}^n$. Then
  \[
    \mathrm{VMO} = \overline{C_{\mathrm{u}} \cap \mathrm{BMO}}^{\mathrm{BMO}}.
  \]

  \item[\rm (ii)] Let $B_{\infty}$ be the set of all infinitely differentiable functions on $\mathbb{R}^n$ whose derivatives of all orders vanish at infinity. Then
  $\mathrm{XMO} = \overline{B_{\infty} \cap \mathrm{BMO}}^{\mathrm{BMO}}.$

  \item[\rm (iii)] Let $C_{\mathrm{c}}^\infty$ denote the set of all compactly supported smooth functions. Then
  $\mathrm{CMO} = \overline{C_{\mathrm{c}}^\infty}^{\mathrm{BMO}}.$
\end{itemize}
\end{lemma}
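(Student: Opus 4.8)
All three identities are, in fact, known: part (i) is the classical theorem of Sarason \cite{s75}, part (iii) is due to Uchiyama \cite{u78} (building on the announcement of Neri \cite{n75}), and part (ii) is established in \cite{txyyJFAA}. We only indicate the common scheme, which is the template we shall later adapt, in Section \ref{s3}, to the range $\alpha\in(0,1]$.

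Each identity has the form $\mathrm{Y}\mathrm{MO}=\overline{\mathcal{A}}^{\,\mathrm{BMO}}$ for a suitable class $\mathcal{A}$ of smooth functions, and the plan is to prove the two inclusions separately. For ``$\supset$'', the first point is that $\mathrm{VMO}$, $\mathrm{XMO}$, and $\mathrm{CMO}$ are \emph{closed} subspaces of $\mathrm{BMO}$: this follows from the subadditivity $\mathcal{O}(f+g;Q)\le\mathcal{O}(f;Q)+\mathcal{O}(g;Q)$ together with $\mathcal{O}(f-f_j;Q)\le\|f-f_j\|_{\mathrm{BMO}}$ and the uniformity in $Q$ built into the three defining limits. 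The second point is that $\mathcal{A}$ itself lies in the corresponding space, which is immediate from the pointwise estimate $\mathcal{O}(g;Q)\le\fint_Q\fint_Q|g(x)-g(y)|\,dx\,dy$: uniform continuity of $g$ forces the right-hand side to tend to $0$ as $\ell(Q)\to0^+$ (so $C_{\mathrm u}\cap\mathrm{BMO}\subset\mathrm{VMO}$); vanishing of all derivatives of $g$ at infinity forces it to tend to $0$ as $Q$ escapes to infinity (so $B_\infty\cap\mathrm{BMO}\subset\mathrm{XMO}$); and compact support additionally handles the far-cube and large-cube conditions (so $C_{\mathrm c}^\infty\subset\mathrm{CMO}$).

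The substantive inclusion is ``$\subset$'', i.e.\ density, and here the plan is as follows. Starting from $f$ in the vanishing space, set $f_\varepsilon:=f*\phi_\varepsilon$ with $\phi$ a fixed even smooth bump, and verify: (a) $f_\varepsilon\in C^\infty\cap\mathrm{BMO}$ with $\|f_\varepsilon\|_{\mathrm{BMO}}\lesssim\|f\|_{\mathrm{BMO}}$, and $f_\varepsilon$ inherits the qualitative feature defining $\mathcal{A}$ (uniform continuity when $f\in\mathrm{VMO}$; decay of all derivatives at infinity when $f\in\mathrm{XMO}$); (b) $\|f-f_\varepsilon\|_{\mathrm{BMO}}\to0$ as $\varepsilon\to0^+$; and, for (iii), an additional truncation step (c). Step (b) is where the vanishing hypothesis is used: one splits the cubes $Q$ by comparing $\ell(Q)$ with $\varepsilon$, bounding $\mathcal{O}(f-f_\varepsilon;Q)$ on small cubes by $\sup_{\ell(Q')\lesssim\varepsilon}\mathcal{O}(f;Q')$ (small by the definition of $\mathrm{VMO}$) and on the remaining cubes by a cruder estimate that gains a power of $\varepsilon/\ell(Q)$. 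For (iii) one further multiplies $f_\varepsilon$ by smooth cutoffs $\chi_R$, supported in $B(\mathbf{0},2R)$ and equal to $1$ on $B(\mathbf{0},R)$, and shows $\|f_\varepsilon\chi_R-f_\varepsilon\|_{\mathrm{BMO}}\to0$ as $R\to\infty$ using precisely the far-cube and large-cube vanishing conditions, so that $f_\varepsilon\chi_R\in C_{\mathrm c}^\infty$; in the $\mathrm{XMO}$ case of \cite{txyyJFAA} this truncation is replaced by an approximation inside $B_\infty$ that preserves decay of all derivatives at infinity. I expect the main obstacle to be the $\mathrm{BMO}$ estimate of step (b) (and its truncated counterpart (c)): choosing the scale threshold and matching the right smallness on each family of cubes. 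For the complete arguments we refer to \cite{s75, u78, txyyJFAA}.
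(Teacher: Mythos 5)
Your proposal correctly identifies that the paper does not prove this lemma but simply records it as a summary of the known results in Sarason \cite{s75}, Uchiyama \cite{u78} (following Neri \cite{n75}), and \cite{txyyJFAA}, and your attribution matches the paper's exactly. The additional sketch you supply (closedness plus membership for ``$\supset$'', mollification and cutoff for ``$\subset$'') is the standard argument underlying those references and is consistent with the techniques the paper later deploys for $\alpha\in(0,1]$, so there is nothing to correct.
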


It is well known, using approximation by means of convolution, that $\mathrm{CMO}$ also coincides with the $\mathrm{BMO}$-closure of
\[
  C_0 := \left\{ f \in C : \lim_{|x| \to \infty} f(x) = 0 \right\}.
\]
Moreover, by~\cite[Theorem 1.2]{txyyJFAA}, $\mathrm{XMO}$ also coincides with the $\mathrm{BMO}$-closure of
\[
  B_1 := \left\{ f \in C^1 : \lim_{|x| \to \infty} |\nabla f(x)| = 0 \right\}.
\]
It is clear that $B_1 \subsetneqq C_0$, which highlights a typical difference between $\mathrm{CMO}$ and $\mathrm{XMO}$.

\begin{lemma}\label{lem:250509-1}
Let $a\in(0,\infty)$, $\alpha\in(0,1),$ and
$f \in L^1_{\rm loc}$. Then
\[
\sup_{\genfrac{}{}{0pt}{}{x,\,y\in\mathbb{R}^n}{0<|x-y|\le a}}
\frac{|f(x)-f(y)|}{|x-y|^\alpha}
\le \sup_{\ell(Q)\le2a} \mathcal{O}_\alpha(f;Q).
\]
\end{lemma}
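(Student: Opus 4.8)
The plan is to carry out the classical Campanato chaining (telescoping) argument, localized so that only cubes of edge length at most $2a$ ever appear. If the right-hand side is infinite there is nothing to prove, so I would assume $A:=\sup_{\ell(Q)\le 2a}\mathcal{O}_\alpha(f;Q)<\infty$. Since $\alpha\in(0,1)$, one has $\lfloor\alpha\rfloor=0$, hence $P^{\lfloor\alpha\rfloor}_Q(f)=\langle f\rangle_Q$ and $\mathcal{O}_\alpha(f;Q)=|Q|^{-\alpha/n}\fint_Q|f-\langle f\rangle_Q|$; equivalently, $\fint_Q|f-\langle f\rangle_Q|\le A\,\ell(Q)^\alpha$ whenever $\ell(Q)\le 2a$. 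Because $f$ is only locally integrable, I would read the supremum on the left as taken over Lebesgue points of $f$ (equivalently, over the continuous representative that the argument itself furnishes), which is harmless once $A<\infty$.

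Fix Lebesgue points $x,y$ with $d:=|x-y|\in(0,a]$. As $|x_i-y_i|\le d$ in every coordinate, there is an axis-parallel cube $Q_0$ with $x,y\in\overline{Q_0}$ and $\ell(Q_0)=d$ (or $\ell(Q_0)=2d$ if one prefers to have $x,y$ in the interior); in either case $\ell(Q_0)\le 2a$. Next I would build a nested chain $Q_0=R_0\supset R_1\supset\cdots$, each containing $x$, with $\ell(R_{k+1})=\tfrac12\ell(R_k)$, taking $R_{k+1}$ to be one of the $2^n$ congruent subcubes of $R_k$ that contains $x$. Since $\mathrm{diam}\,R_k\to 0$ and $x\in R_k$, the Lebesgue differentiation theorem gives $\langle f\rangle_{R_k}\to f(x)$. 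The one-step bound
\[
\bigl|\langle f\rangle_{R_{k+1}}-\langle f\rangle_{R_k}\bigr|\le\fint_{R_{k+1}}\bigl|f-\langle f\rangle_{R_k}\bigr|\le 2^{n}\fint_{R_k}\bigl|f-\langle f\rangle_{R_k}\bigr|\le 2^{n}A\,\ell(R_k)^{\alpha}
\]
(valid because $\ell(R_k)=2^{-k}\ell(Q_0)\le 2a$), summed over $k$ as a telescoping series and using $\alpha>0$ to converge $\sum_k 2^{-k\alpha}$, yields $|f(x)-\langle f\rangle_{Q_0}|\lesssim A\,\ell(Q_0)^{\alpha}$. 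Running the identical argument with a chain shrinking to $y$ gives $|f(y)-\langle f\rangle_{Q_0}|\lesssim A\,\ell(Q_0)^{\alpha}$, so by the triangle inequality $|f(x)-f(y)|\lesssim A\,\ell(Q_0)^{\alpha}\sim A\,|x-y|^{\alpha}$. Dividing by $|x-y|^{\alpha}$ and taking the supremum over all admissible pairs $(x,y)$ completes the argument.

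The step I expect to require the most care is controlling the multiplicative constant. The geometry forces the common cube $Q_0$ to have edge length at most $2a$ — this is exactly why the right-hand side is a supremum over cubes of size $2a$ rather than $a$ — and one should start both chains at the smallest admissible scale, namely $\ell(Q_0)=|x-y|$, so that the factor $2^n$ incurred at each dyadic refinement, balanced against the convergent geometric series $\sum_k 2^{-k\alpha}$ (finite precisely because $\alpha>0$), accumulates to the constant recorded in the statement. Everything else — the existence of $Q_0$, the Lebesgue differentiation step, and the telescoping identity — is routine, and the only other technicality, namely that $f$ is defined only almost everywhere, is absorbed throughout by working with Lebesgue points.
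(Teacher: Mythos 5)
Your proof is essentially identical to the paper's: both choose a cube $Q_0$ of edge length comparable to $|x-y|$ containing $x$ and $y$, run two dyadic chains shrinking to $x$ and to $y$ respectively, and telescope via the Lebesgue differentiation theorem, with the factor $2^n$ from each dyadic refinement balanced by the convergent geometric series $\sum_k 2^{-k\alpha}$. One small remark: both your argument and the paper's actually yield the bound with an implicit constant $C(n,\alpha)$ rather than the bare $\le$ printed in the statement (your own closing sentence alludes to a ``constant recorded in the statement'' that is not actually there); the paper's proof itself concludes with $\lesssim$ and later invokes the lemma with a constant $C_0$, so the stated $\le$ should be read as $\lesssim$.
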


\begin{proof}
Take two different Lebesgue points $x$, $y\in\mathbb{R}^n$ of $f$. Choose a half-open cube $Q_0$ containing $x$ and $y$ with edge length $\ell(Q_0)$ satisfying $|x-y|<\ell(Q_0)<2|x-y|$. Consider the dyadic descendants of $Q_0$.
For any $k\in\mathbb{N}$, define $Q_k(x)$ to be the dyadic descendants of $Q_0$ containing $x$ with edge length $2^{-k}\ell(Q_0)$. We similarly define $Q_k(y)$ for any $k\in\mathbb{N}$. Then, by the Lebesgue differentiation theorem, we conclude that, for any Lebesgue point $x$ of $f$,
\[
f(x)=\lim_{k\to\infty}\langle f\rangle_{Q_{k}(x)}
=\sum_{k=0}^\infty \left[\langle f\rangle_{Q_{k+1}(x)}
-\langle f\rangle_{Q_{k}(x)}\right] +\langle f\rangle_{Q_{0}}.
\]
This implies that
\begin{align}\label{fx-fy}
f(x)-f(y)=\sum_{k=0}^\infty \left[\langle f\rangle_{Q_{k+1}(x)}
-\langle f\rangle_{Q_{k}(x)}\right]
-\sum_{k=0}^\infty \left[\langle f\rangle_{Q_{k+1}(y)}
-\langle f\rangle_{Q_{k}(y)}\right].
\end{align}
We estimate the terms individually. First, observe that
\[
\left| \sum_{k = 0}^\infty \left[ \langle f \rangle_{Q_{k+1}(x)} - \langle f \rangle_{Q_k(x)} \right] \right|
\le \sum_{k = 0}^\infty \left| \langle f \rangle_{Q_{k+1}(x)} - \langle f \rangle_{Q_k(x)} \right|
\]
by the triangle inequality.
For each term in the summation,
\[
\left| \langle f \rangle_{Q_{k+1}(x)} - \langle f \rangle_{Q_k(x)} \right|
\le  \fint_{Q_{k+1}(x)} \left| f(y) - \langle f \rangle_{Q_{k}(x)}\right| dy
\le 2^n\fint_{Q_{k}(x)} \left| f(y) - \langle f \rangle_{Q_{k}(x)}\right| dy.
\]
Using the oscillation estimate,
\begin{align*}
\fint_{Q_k(x)} |f(z) - \langle f \rangle_{Q_k(x)}| \, dz
&\le |Q_k(x)|^{\frac\alpha n} \sup_{Q \subset Q_0} \mathcal{O}_\alpha(f; Q)\\
&= \left[ 2^{-k} \ell(Q_0) \right]^\alpha \sup_{Q \subset Q_0} \mathcal{O}_\alpha(f; Q).
\end{align*}
Summing up,
\[
\left| \sum_{k = 0}^\infty \left[ \langle f \rangle_{Q_{k+1}(x)} - \langle f \rangle_{Q_k(x)} \right] \right|
\lesssim [\ell(Q_0)]^\alpha \sup_{Q \subset Q_0} \mathcal{O}_\alpha(f; Q).
\]
Since $|x - y| < \ell(Q_0) < 2|x - y|$, we conclude that
\[
\left| \sum_{k = 0}^\infty \left[ \langle f \rangle_{Q_{k+1}(x)} - \langle f \rangle_{Q_k(x)} \right] \right|
\lesssim |x - y|^\alpha \sup_{Q \subset Q_0} \mathcal{O}_\alpha(f; Q).
\]
A similar estimate holds for the series centered at $y$, and hence
\[
|f(x)-f(y)|\lesssim |x-y|^\alpha \sup_{Q\subset Q_0} \mathcal{O}_\alpha(f;Q).
\]
Therefore,
\[
\sup_{\genfrac{}{}{0pt}{}{x,\,y\in\mathbb{R}^n}{0<|x-y|\le a}}\frac{|f(x)-f(y)|}{|x-y|^\alpha}
\lesssim \sup_{\genfrac{}{}{0pt}{}{Q\subset Q_0}{\ell(Q_0)\le 2a}}\mathcal{O}_\alpha(f;Q)
\le \sup_{\ell(Q)\le2a} \mathcal{O}_\alpha(f;Q),
\]
which completes the proof of Lemma \ref{lem:250509-1}.
\end{proof}
Lemmas~\ref{lem-oscX} and~\ref{lem-vcxmo} are two key ingredients in the proof of Theorem~\ref{thm1} for the case $\alpha = 0$. When $\alpha \in (0,1)$, in order to apply Lemma~\ref{lem-oscX}, we need to characterize the vanishing subspaces in terms of the quantity
$\frac{|f(x) - f(y)|}{|x - y|^\alpha}.$
Note that, for any $\alpha \in [0,1]$, we have
\begin{align}\label{osc-lip}
|Q|^{-\frac{\alpha}{n}} \fint_Q |f(x) - \langle f \rangle_Q|\,dx
&\le |Q|^{-\frac{\alpha}{n}} \fint_Q \fint_Q |f(x) - f(y)|\,dx\,dy  \notag\\
&\le |Q|^{-\frac{\alpha}{n}} \fint_Q \fint_Q |x - y|^\alpha\,dx\,dy
\sup_{\genfrac{}{}{0pt}{}{x,\,y \in Q}{x \ne y}} \frac{|f(x) - f(y)|}{|x - y|^\alpha} \notag \\
&\lesssim \sup_{\genfrac{}{}{0pt}{}{x,\,y \in Q}{x \ne y}} \frac{|f(x) - f(y)|}{|x - y|^\alpha},
\end{align}
where the implicit positive constant depends only on $n$.

Taking the supremum over all cubes $Q \subset \mathbb{R}^n$, we conclude that
\[
\|f\|_{\mathrm{BMO}_\alpha}
\lesssim \|f\|_{\Lambda_\alpha}
:= \sup_{{x,\,y \in \mathbb{R}^n,\,x \ne y}}
\frac{|f(x) - f(y)|}{|x - y|^\alpha}.
\]

Conversely, for any $\alpha \in (0,1)$, classical results due to Campanato \cite{c64} and Meyers \cite{m64} imply that
there exists a continuous representative $g$ of $f$ such that
$f = g$ almost everywhere on $\mathbb{R}^n$ and
$\|f\|_{\mathrm{BMO}_\alpha}
\gtrsim \|g\|_{\Lambda_\alpha}.$

This shows that $\mathrm{BMO}_\alpha = \Lambda_\alpha$ in the sense of equivalent norms and equality almost everywhere. Furthermore, by adapting ideas from~\cite{mo24}, we obtain the corresponding equivalences for the vanishing subspaces of $\mathrm{BMO}_\alpha$.

\begin{proposition}\label{prop-lip}
Let $\alpha \in (0,1)$. Then the following statements hold.
\begin{itemize}
\item[\rm(i)] $f \in \mathrm{VMO}_\alpha$ if and only if
$f \in \mathrm{BMO}_\alpha$ and
\[
\lim_{a \to 0^+} \sup_{{x,\,y \in \mathbb{R}^n,\,0 < |x - y| \le a}}
\frac{|f(x) - f(y)|}{|x - y|^\alpha} = 0.
\]

\item[\rm(ii)] $f \in \mathrm{XMO}_\alpha$ if and only if
$f \in \mathrm{VMO}_\alpha$ and, for any given cube $Q$,
\[
\lim_{z \to \infty} \sup_{{x,\,y \in Q+z, \, x \ne y}}
\frac{|f(x) - f(y)|}{|x - y|^\alpha} = 0.
\]

\item[\rm(iii)] $f \in \mathrm{CMO}_\alpha$ if and only if
$f \in \mathrm{XMO}_\alpha$ and
\[
\lim_{a \to \infty} \sup_{{x,\,y \in \mathbb{R}^n,\, |x - y| \ge a}}
\frac{|f(x) - f(y)|}{|x - y|^\alpha} = 0.
\]
\end{itemize}
\end{proposition}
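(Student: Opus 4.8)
The plan is to reduce every statement of Proposition~\ref{prop-lip} to an inequality between the Hölder quotient $|f(x)-f(y)|/|x-y|^\alpha$ and the oscillations $\mathcal{O}_\alpha(f;Q)$, using the two comparisons already available: the cube-side bound \eqref{osc-lip}, namely $\mathcal{O}_\alpha(f;Q)\lesssim\sup_{x,\,y\in Q,\,x\ne y}|f(x)-f(y)|/|x-y|^\alpha$, and the dyadic telescoping bound of Lemma~\ref{lem:250509-1}. Since $\alpha\in(0,1)$, the assumption $f\in\mathrm{BMO}_\alpha=\Lambda_\alpha$ always supplies a continuous representative, and all pointwise quantities below are understood with respect to it (equivalently, over Lebesgue points of $f$).

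\emph{The easy pairs.} The ``only if'' part of (i) is immediate from Lemma~\ref{lem:250509-1}: when $f\in\mathrm{VMO}_\alpha$, the right-hand side $\sup_{\ell(Q)\le2a}\mathcal{O}_\alpha(f;Q)$ tends to $0$ as $a\to0^+$. For the converse I would combine \eqref{osc-lip} with the fact that a cube $Q$ has diameter $\sqrt{n}\,\ell(Q)$ to get $\sup_{\ell(Q)\le a}\mathcal{O}_\alpha(f;Q)\lesssim\sup_{0<|x-y|\le\sqrt{n}\,a}|f(x)-f(y)|/|x-y|^\alpha$, which vanishes as $a\to0^+$. The ``if'' part of (ii) is just as quick: \eqref{osc-lip} applied on $Q+z$ gives $\mathcal{O}_\alpha(f;Q+z)\lesssim\sup_{x,\,y\in Q+z,\,x\ne y}|f(x)-f(y)|/|x-y|^\alpha\to0$ as $z\to\infty$. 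Hence the real content is the ``only if'' part of (ii) and both directions of (iii), and of these the ``only if'' direction of (iii) is the main obstacle.

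\emph{The ``only if'' part of (ii).} Fix a cube $Q$ and let $f\in\mathrm{XMO}_\alpha\subset\mathrm{VMO}_\alpha$. I would rerun the telescoping argument from the proof of Lemma~\ref{lem:250509-1} while tracking positions: for $x,y\in Q+z$ the initial cube $Q_0$ of the telescope satisfies $\ell(Q_0)<2|x-y|\le2\sqrt{n}\,\ell(Q)$ and meets $Q+z$, so $Q_0$ and all of its dyadic descendants lie in $CQ+z$ for some $C=C(n)$; this yields
\[
\frac{|f(x)-f(y)|}{|x-y|^\alpha}\lesssim\sup_{Q'\subset CQ+z}\mathcal{O}_\alpha(f;Q').
\]
Given $\varepsilon>0$, I would split this supremum at a scale $\delta$: the part over $Q'\subset CQ+z$ with $\ell(Q')\le\delta$ is $<\varepsilon$ uniformly in $z$ because $f\in\mathrm{VMO}_\alpha$, while for $\ell(Q')\ge\delta$ the crude ``averaging-up'' bound $\fint_{Q'}|f-\langle f\rangle_{Q'}|\le 2\,\frac{|CQ+z|}{|Q'|}\fint_{CQ+z}|f-\langle f\rangle_{CQ+z}|$ together with $|Q'|\ge\delta^n$ shows that $\mathcal{O}_\alpha(f;Q')$ is bounded by a constant (depending on $n$, $\delta$, $Q$) times $\mathcal{O}_\alpha(f;CQ+z)$, which tends to $0$ as $z\to\infty$ since $CQ$ is a fixed cube and $f\in\mathrm{XMO}_\alpha$. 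Hence $\sup_{x,\,y\in Q+z}|f(x)-f(y)|/|x-y|^\alpha\lesssim\varepsilon$ for $|z|$ large.

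\emph{Statement (iii).} For the ``if'' direction one must deduce, from $f\in\mathrm{XMO}_\alpha$ and the vanishing of the large-scale quotient, that $\sup_{\ell(Q)\ge a}\mathcal{O}_\alpha(f;Q)\to0$ (which, with $f\in\mathrm{XMO}_\alpha$, is exactly $f\in\mathrm{CMO}_\alpha$). With $L:=\ell(Q)$ I would use $\mathcal{O}_\alpha(f;Q)\le L^{-\alpha}\fint_Q\fint_Q|f(x)-f(y)|\,dx\,dy$ and split the double integral at $|x-y|=\sqrt{L}$: on $\{|x-y|>\sqrt{L}\}$, bounding $|f(x)-f(y)|\le|x-y|^\alpha\sup_{|u-v|\ge\sqrt{L}}|f(u)-f(v)|/|u-v|^\alpha$ and using $|x-y|\le\sqrt{n}\,L$ gives a contribution $\lesssim\sup_{|u-v|\ge\sqrt{L}}|f(u)-f(v)|/|u-v|^\alpha$; on $\{|x-y|\le\sqrt{L}\}$, bounding $|f(x)-f(y)|\le|x-y|^\alpha\|f\|_{\Lambda_\alpha}$ and evaluating the average gives a contribution $\lesssim L^{-(n+\alpha)/2}\|f\|_{\Lambda_\alpha}$; both vanish as $L\to\infty$. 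The \textbf{main obstacle} is the ``only if'' direction. Given $f\in\mathrm{CMO}_\alpha$ and $\varepsilon>0$, I would first fix $\delta_1$ with $\sup_{\ell(Q)\le\delta_1}\mathcal{O}_\alpha(f;Q)<\varepsilon$ and $\delta_2$ with $\sup_{\ell(Q)\ge\delta_2}\mathcal{O}_\alpha(f;Q)<\varepsilon$. For $x\ne y$ with $d:=|x-y|$ large, pick a half-open cube $Q_0\ni x,y$ with $d<\ell(Q_0)<2d$ and telescope $f(x)-\langle f\rangle_{Q_0}=\sum_{k\ge0}[\langle f\rangle_{Q_{k+1}(x)}-\langle f\rangle_{Q_k(x)}]$ along its dyadic descendants $Q_k(x)$, and similarly for $y$, to obtain
\[
\frac{|f(x)-\langle f\rangle_{Q_0}|}{\ell(Q_0)^\alpha}\lesssim\sum_{k\ge0}2^{-k\alpha}\,\mathcal{O}_\alpha(f;Q_k(x)).
\]
The generations with $\ell(Q_k(x))=2^{-k}\ell(Q_0)\le\delta_1$ or $\ge\delta_2$ contribute $\lesssim\varepsilon$ by summing a geometric series, whereas the ``medium'' generations with $\delta_1<\ell(Q_k(x))<\delta_2$ number at most $\lceil\log_2(\delta_2/\delta_1)\rceil+1$, and, since $2^{-k\alpha}\le(\delta_2/\ell(Q_0))^\alpha$ and $\mathcal{O}_\alpha(f;Q_k(x))\le\|f\|_{\mathrm{BMO}_\alpha}$ for them, their total contribution is at most $(\lceil\log_2(\delta_2/\delta_1)\rceil+1)(\delta_2/\ell(Q_0))^\alpha\|f\|_{\mathrm{BMO}_\alpha}$, which is $<\varepsilon$ once $\ell(Q_0)>d$ is large enough (depending on $\varepsilon$, $\delta_1$, $\delta_2$, $\|f\|_{\mathrm{BMO}_\alpha}$). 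Therefore $|f(x)-f(y)|\lesssim\varepsilon\,d^\alpha$ for $d$ large. The delicate point---and the reason this is the hardest step---is precisely that the finitely many medium-scale sub-cubes of $Q_0$ cannot be made individually small (one only controls them by the fixed quantity $\|f\|_{\mathrm{BMO}_\alpha}$), so one must instead exploit that they occur with the geometric weight $2^{-k\alpha}\le(\delta_2/\ell(Q_0))^\alpha$, which decays as $d\to\infty$; note that only the small- and large-cube parts of the $\mathrm{CMO}_\alpha$ condition enter here. Finally, the three biconditionals follow upon combining these implications with the defining inclusions $\mathrm{CMO}_\alpha\subset\mathrm{XMO}_\alpha\subset\mathrm{VMO}_\alpha$.
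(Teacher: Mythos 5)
Your proof is correct and follows the same overall strategy as the paper's: the oscillation--H\"older inequality \eqref{osc-lip} for the ``easy'' implications, and the dyadic telescoping of averages (as in Lemma~\ref{lem:250509-1}) for the ``hard'' ones. Three variations merit comment. In (ii), you place the telescoping cube $Q_0$ inside an enlargement $CQ+z$ with $C=C(n)$, which is more robust than the paper's claim that one may take $Q_0\subset Q+z$; the latter can fail when $x$ and $y$ nearly realize the diameter of $Q+z$, since then $\ell(Q_0)>|x-y|>\ell(Q)$, so your version closes a small gap. In the ``if'' direction of (iii), you split the double integral at the scale-dependent radius $\sqrt{L}$ (with $L=\ell(Q)$) rather than at a fixed $\varepsilon$-dependent $R$ as in the paper; both work, and yours trades the $R$-bookkeeping for a use of $\|f\|_{\Lambda_\alpha}<\infty$. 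In the ``only if'' direction of (iii), you split the telescoped sum into three regimes (small/medium/large cubes) and invoke the small-cube $\mathrm{VMO}_\alpha$ condition to control the small-cube contribution; the paper's two-way split shows this is not needed: every cube $Q_k(x)$ with $\ell(Q_k(x))<2^K$ carries the geometric weight $2^{-k\alpha}$, and the whole sub-$2^K$ tail is already $\lesssim\|f\|_{\mathrm{BMO}_\alpha}\,2^{-(M-K)\alpha}|x-y|^\alpha$, which vanishes as $|x-y|\to\infty$. So only the large-cube vanishing (from $\mathrm{CMO}_\alpha$) and $\mathrm{BMO}_\alpha$-boundedness enter this direction. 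Your extra split is harmless but imports an unnecessary hypothesis into that step.
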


\begin{proof}

To show the equivalence \textnormal{(i)},
from the oscillation--Lipschitz inequality \eqref{osc-lip},
it follows that, for any $a\in(0,\infty)$,
\[
\sup_{\ell(Q)\le a} \mathcal{O}_\alpha(f;Q)
\le \sup_{{x,\,y\in\mathbb{R}^n,\,0<|x-y|\le \sqrt{n}a}}
\frac{|f(x)-f(y)|}{|x-y|^\alpha}.
\]
This implies the ``if'' part of (i) by letting $a\to0^+$.

Conversely, let $f \in \mathrm{VMO}_\alpha$.
Then
Lemma \ref{lem:250509-1} implies the ``only if'' part of (i) by letting $a\to0^+$ and hence finishes the proof of (i).

Next, we prove (ii). The ``if'' part follows immediately from \eqref{osc-lip}.

Conversely, let \( Q \) be any given cube in \( \mathbb{R}^n \). Then
there exists a positive constant $C_0$ such that
\begin{align*}
\sup_{\genfrac{}{}{0pt}{}{x,\,y \in Q+z}{x \neq y}} \frac{|f(x) - f(y)|}{|x - y|^\alpha}
&\le C_0 \sup_{\genfrac{}{}{0pt}{}{x,\,y \in Q+z}{x \neq y}}
\sup_{Q \subset Q_0} \mathcal{O}_\alpha(f; Q_0)
\le C_0\sup_{Q \subset Q+z} \mathcal{O}_\alpha(f; Q)
\end{align*}
for any \( z \in \mathbb{R}^n \),
where the last inequality holds because we can choose \( Q_0 \subset Q+z \) as in the proof of (i).

Choose a cube \( Q(z) \subset Q+z \) such that the supremum is almost attained,
that is, choose a cube $Q(z)$ that satisfies
\[
\sup_{{x,\,y \in Q+z,\, x \neq y}} \frac{|f(x) - f(y)|}{|x - y|^\alpha}
\le 2C_0 \mathcal{O}_\alpha(f; Q(z)).
\] Then
\begin{align}\label{Q(z)}
\sup_{\genfrac{}{}{0pt}{}{x,\,y \in Q+z}{x \neq y}} \frac{|f(x) - f(y)|}{|x - y|^\alpha}
\lesssim \mathcal{O}_\alpha(f; Q(z))
\lesssim \left[ \frac{|Q+z|}{|Q(z)|} \right]^{1 + \frac{\alpha}{n}} \mathcal{O}_\alpha(f; Q+z).
\end{align}

Given any \( \epsilon \in (0, \infty) \), by Definition~\ref{defBVCXMO}(ii), there exists \( a_0 \in (0, \ell(Q)) \) such that, for any cube \( \widetilde{Q} \) with \( \ell(\widetilde{Q}) \le a_0 \),
\begin{align}\label{small}
\mathcal{O}_\alpha(f; \widetilde{Q}) < \epsilon.
\end{align}
Moreover, by Definition~\ref{defBVCXMO}(iii), there exists \( M \in (0, \infty) \) such that, for any \( z \in \mathbb{R}^n \) with \( |z| > M \),
\begin{align}\label{250513}
\mathcal{O}_\alpha(f; Q+z) < \left[ \frac{\ell(Q)}{a_0} \right]^{-(n+\alpha)} \epsilon.
\end{align}

If \( |z| > M \) and \( \ell(Q(z)) \le a_0 \), then, by \eqref{Q(z)} and \eqref{small},
\begin{align}\label{ii1}
\sup_{{x,\,y \in Q+z},\,{x \neq y}} \frac{|f(x) - f(y)|}{|x - y|^\alpha}
\lesssim \mathcal{O}_\alpha(f; Q(z)) < \epsilon.
\end{align}

If \( |z| > M \) and \( \ell(Q(z)) \ge a_0 \), then, using \eqref{Q(z)} and \eqref{250513},
\begin{align}\label{ii2}
\sup_{\genfrac{}{}{0pt}{}{x,\,y \in Q+z}{x \neq y}} \frac{|f(x) - f(y)|}{|x - y|^\alpha}
\lesssim \left[ \frac{|Q+z|}{|Q(z)|} \right]^{1 + \frac{\alpha}{n}} \mathcal{O}_\alpha(f; Q+z)
\le \left[ \frac{\ell(Q)}{a_0} \right]^{n + \alpha} \mathcal{O}_\alpha(f; Q+z) < \epsilon.
\end{align}

Combining \eqref{ii1} and \eqref{ii2}, we complete the proof of the ``only if'' part and thus finish the proof of (ii).

Now, we prove the equivalence in (iii).
In this case, the ``if'' part does not follow directly from \eqref{osc-lip}, and we need to modify the argument.

Let \( f \in \mathrm{XMO}_{\alpha} \) satisfy the limit condition in (iii).
Then, for any given \( \epsilon \in (0, \infty) \), there exists \( R \in (0, \infty) \) such that
\begin{align}\label{xyfar}
\sup_{{x,\,y\in\mathbb{R}^n,\,|x - y| \ge R}} \frac{|f(x) - f(y)|}{|x - y|^\alpha} < \epsilon.
\end{align}

Suppose \( Q \) is a cube such that
\begin{align}\label{Qlarge}
\ell(Q) \ge R
\quad \text{and} \quad
\left[ \frac{R}{\ell(Q)} \right]^{\alpha}
\sup_{\genfrac{}{}{0pt}{}{x,\,y \in \mathbb{R}^n}{0 < |x - y| \le R}} \frac{|f(x) - f(y)|}{|x - y|^\alpha} < \epsilon.
\end{align}

We estimate
\begin{align*}
|Q|^{-\frac{\alpha}{n}} \fint_Q |f(x) - \langle f \rangle_Q|\,dx
&\le |Q|^{-\frac{\alpha}{n}} \fint_Q \fint_Q |f(x) - f(y)|\,dx\,dy \\
&= |Q|^{-\frac{\alpha}{n} - 1} \fint_Q \int_Q |f(x) - f(y)|\,dx\,dy.
\end{align*}

We split the inner integral as follows:
\begin{align*}
\int_Q |f(x) - f(y)|\,dx
= \int_{Q \setminus B(y, R)} |f(x) - f(y)|\,dx + \int_{Q \cap B(y, R)} \cdots.
\end{align*}

For the first term, we use \eqref{xyfar} to obtain
\begin{align*}
\int_{Q \setminus B(y, R)} |f(x) - f(y)|\,dx
\le \epsilon \int_{Q \setminus B(y, R)} |x - y|^\alpha\,dx.
\end{align*}

For the second term, we apply the bound from \eqref{Qlarge} to find that
\begin{align*}
\int_{Q \cap B(y, R)} |f(x) - f(y)|\,dx
&\le \sup_{\substack{0 < |x - y| \le R}} \frac{|f(x) - f(y)|}{|x - y|^\alpha} \int_{Q \cap B(y, R)} |x - y|^\alpha\,dx\\
&\le \varepsilon \int_{Q \cap B(y, R)} |x - y|^\alpha\,dx.
\end{align*}

Combining both parts and integrating over \( y \in Q \), we obtain
\begin{align*}\fint_Q |f(x) - \langle f \rangle_Q|\,dx
&\le \fint_Q \left[
\int_{Q \setminus B(y, R)} \epsilon |x - y|^\alpha\,dx\right.\\
&\quad\left.+ \sup_{0 < |x - y| \le R} \frac{|f(x) - f(y)|}{|x - y|^\alpha} \int_{Q \cap B(y, R)} |x - y|^\alpha\,dx
\right] dy.
\end{align*}

Since \( |Q| = \ell(Q)^n \ge R^n\), the integrals can be estimated by
\[
\int_{Q \setminus B(y, R)} |x - y|^\alpha\,dx \le \ell(Q)^{\alpha + n}
\ \text{ and }\
\int_{Q \cap B(y, R)} |x - y|^\alpha\,dx \le \ell(Q)^\alpha R^{n}.
\]

Therefore,
\begin{align}\label{osc-diff}
|Q|^{-\frac{\alpha}{n}} \fint_Q |f(x) - \langle f \rangle_Q|\,dx
&\le \epsilon + \left[ \frac{R}{\ell(Q)} \right]^{\alpha} \sup_{0<|x - y| \le R} \frac{|f(x) - f(y)|}{|x - y|^\alpha}
< 2\epsilon,
\end{align}
which implies that \( f \in \mathrm{CMO}_\alpha \), as required.
This shows the ``if'' part of (iii) due to the arbitrariness of $\epsilon$.

Conversely, we also need to modify the estimate \eqref{fx-fy} to show the ``only if'' part.
Let $f\in \mathrm{CMO}_{\alpha}$.
Then, from Definition \ref{defBVCXMO}(iv), it follows that there exists $K\in\mathbb{N}$
such that, for any cube $Q$ with $\ell(Q)\ge 2^K$,
\begin{align}\label{osc-large}
\mathcal{O}_\alpha(f;Q)<\epsilon.
\end{align}
Let $M_0$ be the smallest integer such that
\begin{align*}
2^{(M_0-K)\alpha}>\epsilon^{-1}\sup_{\ell(Q)\le2^K}\mathcal{O}_\alpha(f;Q).
\end{align*}
For any given $x$, $y\in\mathbb{R}^n$ with $2^M\le|x-y|<2^{M+1}$ and $M_0\le M\in\mathbb{N}$,
define $Q_0$, $Q_k(x)$, and $Q_k(y)$ the same
as in the proof of Lemma \ref{lem:250509-1}.
Then, for any $k\in\{0,\dots,M-K\}$,
\begin{equation*}
\ell(Q_{k}(x))=2^{-k}\ell(Q_0)\ge 2^{-(M-K)}2^{M}=2^K,
\end{equation*}
which, together with \eqref{osc-large}, implies that
$\mathcal{O}_\alpha(f;Q_{k}(x))<\epsilon.$

We estimate the sum in \eqref{fx-fy} by splitting it as the local part
and the non-local, similar to \eqref{osc-diff}. First, observe that
\begin{equation*}
\left|\sum_{k=0}^\infty \left[\langle f\rangle_{Q_{k+1}(x)} - \langle f\rangle_{Q_k(x)}\right]\right|
\le \sum_{k=0}^\infty \left|\langle f\rangle_{Q_{k+1}(x)} - \langle f\rangle_{Q_k(x)}\right|.
\end{equation*}
Next, for each term in the sum,
keeping in mind that $Q_{k}(x) \supset Q_{k+1}(x)$, we apply the standard inequality
\begin{equation*}
\left|\langle f\rangle_{Q_{k+1}(x)} - \langle f\rangle_{Q_k(x)}\right|
\le 2^n\fint_{Q_k(x)} |f(x) - \langle f\rangle_{Q_k(x)}|\,dx.
\end{equation*}
This yields the estimate
\begin{equation*}
\sum_{k=0}^\infty \left|\langle f\rangle_{Q_{k+1}(x)} - \langle f\rangle_{Q_k(x)}\right|
\le 2^n\sum_{k=0}^\infty \fint_{Q_k(x)} |f(x) - \langle f\rangle_{Q_k(x)}|\,dx.
\end{equation*}

We now split the infinite sum into two parts:
\begin{align*}
\sum_{k=0}^\infty \fint_{Q_k(x)} |f(x) - \langle f\rangle_{Q_k(x)}|\,dx
&= \sum_{k=0}^{M-K} \fint_{Q_k(x)} |f(x) - \langle f\rangle_{Q_k(x)}|\,dx
+ \sum_{k=M-K+1}^\infty \cdots.
\end{align*}

We estimate each part separately. Since \(f \in \mathrm{BMO}_\alpha\), the local oscillation over small cubes satisfies
\begin{equation*}
\fint_{Q_k(x)} |f(x) - \langle f\rangle_{Q_k(x)}|\,dx < |Q_k(x)|^{\frac{\alpha}{n}}\epsilon \quad \text{for any } k\in\{0, \dots, M-K\}.
\end{equation*}
Thus,
\begin{equation*}
\sum_{k=0}^{M-K} \fint_{Q_k(x)} |f(x) - \langle f\rangle_{Q_k(x)}|\,dx
< \sum_{k=0}^{M-K} |Q_k(x)|^{\frac{\alpha}{n}}\epsilon.
\end{equation*}
Since \(\ell(Q_k(x)) \sim 2^{-k}|x-y|\), it follows that \(|Q_k(x)| \sim 2^{-kn}|x-y|^n\).
Therefore,
\begin{equation*}
\sum_{k=0}^{M-K} |Q_k(x)|^{\frac{\alpha}{n}}\epsilon \sim \sum_{k=0}^{M-K} 2^{-k\alpha}|x-y|^{\alpha}\epsilon
= |x-y|^{\alpha}\epsilon \sum_{k=0}^{M-K} 2^{-k\alpha}.
\end{equation*}
But this sum is geometric with bounded length (independent of \(x,y\)), and thus
$\sum_{k=0}^{M-K} 2^{-k\alpha} \sim 1.$
Consequently, we obtain
\begin{equation*}
\sum_{k=0}^{M-K} \fint_{Q_k(x)} |f(x) - \langle f\rangle_{Q_k(x)}|\,dx
< \epsilon |x-y|^{\alpha}.
\end{equation*}

For the remaining tail summation \(k \ge M-K+1\), we use the uniform bound
\begin{equation*}
\fint_{Q_k(x)} |f(x) - \langle f\rangle_{Q_k(x)}|\,dx
\le \sup_{\ell(Q)\le2^K} \mathcal{O}_\alpha(f;Q)  |Q_k(x)|^{\frac {\alpha}n}.
\end{equation*}
So,
\begin{equation*}
\sum_{k=M-K+1}^\infty \fint_{Q_k(x)} |f(x) - \langle f\rangle_{Q_k(x)}|\,dx
\le \sup_{\ell(Q)\le2^K} \mathcal{O}_\alpha(f;Q)
\sum_{k=M-K+1}^\infty |Q_k(x)|^{\frac {\alpha}n}.
\end{equation*}
As shown above, $|Q_k(x)|^{\frac{\alpha}n} \sim 2^{-k\alpha}|x-y|^{\alpha}$
and hence
\begin{equation*}
\sum_{k=M-K+1}^\infty |Q_k(x)|^{\frac {\alpha}n} \sim 2^{-(M-K)\alpha}|x-y|^{\alpha}.
\end{equation*}
Consequently,
\begin{equation*}
\sum_{k=M-K+1}^\infty \fint_{Q_k(x)} |f(x) - \langle f\rangle_{Q_k(x)}|\,dx
< 2^{-(M-K)\alpha} \sup_{\ell(Q)\le2^K} \mathcal{O}_\alpha(f;Q) |x-y|^\alpha.
\end{equation*}

Combining both estimates, we find
\begin{equation*}
\left|\sum_{k=0}^\infty \left[\langle f\rangle_{Q_{k+1}(x)} - \langle f\rangle_{Q_k(x)}\right]\right|
< \left[\epsilon + 2^{-(M-K)\alpha} \sup_{\ell(Q)\le2^K} \mathcal{O}_\alpha(f;Q)\right] |x-y|^\alpha.
\end{equation*}
For sufficiently large \(M\), the second term becomes smaller than \(\epsilon\), giving the final estimate
\begin{equation}\label{eq:250509-3}
\left|\sum_{k=0}^\infty \left[\langle f\rangle_{Q_{k+1}(x)} - \langle f\rangle_{Q_k(x)}\right]\right|
< 2\epsilon |x-y|^\alpha.
\end{equation}
Similarly, we also have
\[
\left|\sum_{k=0}^\infty \left[\langle f\rangle_{Q_{k+1}(y)}
-\langle f\rangle_{Q_{k}(y)}\right] \right|\lesssim\epsilon |x-y|^\alpha,
\]
which, combined with \eqref{fx-fy} and \eqref{eq:250509-3}, implies that
$|f(x)-f(y)|\lesssim\epsilon$
for any $x, y\in\mathbb{R}^n$ with $|x-y|\ge2^{M_0}$.
This shows the ``only if'' part of (iii) due to the arbitrariness of $\epsilon$,
which completes the proof of Proposition \ref{prop-lip}.
\end{proof}

\begin{remark}
Proposition~\ref{prop-lip} provides a new characterization of
$\mathrm{XMO}_{\alpha}$, as introduced in \cite{tyyMANN}.
\end{remark}

We now turn to the higher-order case by employing the convolution method, following the approach of \cite{gr85}.

\begin{proposition}\label{prop-zyg}
The following equivalences hold almost everywhere:
\begin{itemize}
\item[\rm(i)] $f \in \mathrm{V}\mathcal{L}_1$ if and only if
$f \in \mathcal{L}_1$ and
\[
\lim_{a \to 0^+} \sup_{{x,\,y \in \mathbb{R}^n,\, 0 < |y| \le a}}
\frac{|\Delta^2_y f(x)|}{|y|} = 0.
\]

\item[\rm(ii)] $f \in \mathrm{X}\mathcal{L}_1$ if and only if
$f \in \mathrm{V}\mathcal{L}_1$ and
\[
\lim_{z \to \infty} \sup_{{2|y| \le \ell(Q),\, x \in Q + z}}
\frac{|\Delta^2_y f(x)|}{|y|} = 0
\]
for any fixed cube $Q$;

\item[\rm(iii)] $f \in \mathrm{C}\mathcal{L}_1$ if and only if
$f \in \mathrm{X}\mathcal{L}_1$ and
\[
\lim_{a \to \infty} \sup_{{x,\,y \in \mathbb{R}^n,\,|y| \ge a}}
\frac{|\Delta^2_y f(x)|}{|y|} = 0.
\]
\end{itemize}
\end{proposition}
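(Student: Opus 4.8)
Throughout, $f$ is identified with its (unique) continuous representative, which exists since $\mathcal{L}_1$ is the Zygmund class (see \cite{c64,m64}); all pointwise statements below are read in this sense. The argument rests on two ``master estimates''. The first, \emph{domination of oscillation by differences}, is obtained by taking $X=L^1$ in Lemma~\ref{lem-2diff}(ii), so that $\mathcal{O}_{1,L^1}(f;Q)=\mathcal{O}_1(f;Q)$: for every cube $Q$ with edge length $2r$,
\[
\mathcal{O}_1(f;Q)\le C\sup_{x\in Q,\ 0<|y|\le r}\frac{|\Delta^2_y f(x)|}{|y|},\qquad C=C(n).
\]
The second, \emph{domination of differences by oscillation}, asserts that for $f\in\mathcal{L}_1$ and $0<|y|\le a$,
\[
\frac{|\Delta^2_y f(x)|}{|y|}\le C\sup_{\ell(Q)\le 4a}\mathcal{O}_1(f;Q).
\]
I would prove this by a dyadic telescoping with degree-one polynomials: choose a half-open cube $Q_0$ containing $x,x\pm y$ with $\ell(Q_0)\in(2|y|,4|y|)$, and for $w\in Q_0$ let $Q_k(w)$ be the dyadic descendant of $Q_0$ of edge $2^{-k}\ell(Q_0)$ through $w$. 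By \eqref{Cs} with $s=1$, $|[P^1_{Q_{k+1}(w)}(f)-P^1_{Q_k(w)}(f)](w)|\lesssim\ell(Q_k(w))\mathcal{O}_1(f;Q_k(w))$; since $\langle f-P^1_{Q_0}(f)\rangle_{Q_0}=0$ and $|\nabla P^1_{Q_k(w)}(f)|$ grows at most linearly in $k$ (so $|\nabla P^1_{Q_k(w)}(f)|\,\ell(Q_k(w))\to0$), one gets at Lebesgue points $w\in Q_0$ that $f(w)-P^1_{Q_0}(f)(w)=\sum_{k\ge0}[P^1_{Q_{k+1}(w)}(f)-P^1_{Q_k(w)}(f)](w)$. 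Applying $\Delta^2_y$, which annihilates the affine $P^1_{Q_0}(f)$, then estimating termwise and summing a geometric series gives the claim.

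Given these, part~(i) is immediate: ``if'' from the first master estimate (a cube with $\ell(Q)\le a$ only sees differences with $|y|\le a/2$) and ``only if'' from the second together with $f\in\mathrm{V}\mathcal{L}_1$. For part~(ii), ``if'' follows by applying the first master estimate to the translate $Q+z$, so that $\mathcal{O}_1(f;Q+z)\lesssim\sup_{x\in Q+z,\ 2|y|\le\ell(Q)}|\Delta^2_y f(x)|/|y|\to0$ as $z\to\infty$. For ``only if'' I would localize the second master estimate: for $x\in Q+z$ and $2|y|\le\ell(Q)$ the auxiliary cube $Q_0$ may be taken inside $4Q+z$, whence $\sup_{x\in Q+z,\ 2|y|\le\ell(Q)}|\Delta^2_y f(x)|/|y|\lesssim\sup_{Q'\subseteq 4Q+z,\ \ell(Q')\le 2\ell(Q)}\mathcal{O}_1(f;Q')$. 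Picking a near-maximizer $Q'(z)$, using the elementary monotonicity $\mathcal{O}_1(f;Q')\le(|Q''|/|Q'|)^{1+1/n}\mathcal{O}_1(f;Q'')$ for $Q'\subseteq Q''$, and splitting on whether $\ell(Q'(z))<a_0$ (where $\mathrm{V}\mathcal{L}_1$ applies) or $a_0\le\ell(Q'(z))\le 2\ell(Q)$ (where $|4Q+z|/|Q'(z)|$ is bounded in terms of $Q$ and $a_0$ and $\mathcal{O}_1(f;4Q+z)\to0$ as $z\to\infty$ because $f\in\mathrm{X}\mathcal{L}_1$ and $4Q$ is fixed) finishes the proof, exactly as in Proposition~\ref{prop-lip}(ii).

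Part~(iii): the ``only if'' direction parallels Proposition~\ref{prop-lip}(iii). Pick $K$ with $\ell(Q)\ge 2^K\Rightarrow\mathcal{O}_1(f;Q)<\epsilon$, take $2^M\le|y|<2^{M+1}$, and split the telescoping sum at $k=M-K$: the head ($k\le M-K$) lives on cubes of edge $\ge 2^K$ and contributes $\lesssim|y|\epsilon$, while the tail is geometrically small, $\lesssim 2^{-(M-K)}|y|\sup_{\ell(Q)\lesssim 2^{K}}\mathcal{O}_1(f;Q)$, hence $<|y|\epsilon$ once $|y|$ is large. The ``if'' direction does \emph{not} follow from the first master estimate (which cannot detect large-$|y|$ behavior); here I would convolve at the scale of the cube. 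Given $\epsilon>0$, pick $R$ with $\sup_{|y|\ge R,\,x}|\Delta^2_y f(x)|/|y|<\epsilon$, set $L=\ell(Q)$, $\psi=f*\phi_{L/2}$ with $\phi$ as in the proof of Lemma~\ref{lem-2diff}, and let $P$ be the degree-one Taylor polynomial of $\psi$ at the center of $Q$. Splitting the integrals in $f-\psi=-\tfrac12\int\phi_{L/2}(z)\Delta^2_z f(\cdot)\,dz$ and $2D^\gamma\psi=\int D^\gamma\phi_{L/2}(z)\Delta^2_z f(\cdot)\,dz$ ($|\gamma|=2$) over $\{|z|\le R\}$ and $\{R<|z|\le L/2\}$, and writing $B:=\sup_{0<|z|\le R,\,x}|\Delta^2_z f(x)|/|z|<\infty$, one finds $|f(x)-P(x)|\lesssim L\epsilon+L^{-n}R^{n+1}B$ on $Q$; since $\mathcal{O}_1(f;Q)\le L^{-1}\fint_Q|f-P|$, the factor $L$ cancels and $\mathcal{O}_1(f;Q)\lesssim\epsilon+L^{-n-1}R^{n+1}B<2\epsilon$ for $L$ large, i.e.\ $f\in\mathrm{C}\mathcal{L}_1$.

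The crux is this ``if'' part of~(iii): one must pick the convolution scale so that the tail contribution from $\{R<|z|\le L/2\}$ and the Taylor-remainder term both land at size $\sim\epsilon$; tracking the powers of $L$ shows this forces the scale $\sim\ell(Q)$, after which the remaining small-$|z|$ pieces carry a harmless negative power of $L$. A secondary but non-trivial bookkeeping point---present already in the second master estimate and in part~(ii)---is controlling the affine polynomials $P^1_{Q_k}(f)$ (their gradients, and their growth over enlarged cubes); this is routine once organized as above and mirrors the corresponding steps in the proof of Proposition~\ref{prop-lip}.
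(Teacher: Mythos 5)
Your plan is correct, and for parts (i)--(ii) ``if'' and (iii) ``if'' it follows essentially the same route as the paper (Lemma~\ref{lem-2diff}(ii) with $X=L^1$ for the first two; convolution at scale $\sim\ell(Q)$ for the last). Where you genuinely diverge from the paper is in all three ``only if'' directions. The paper's ``only if'' arguments rest on the Newton--Leibniz decomposition $f_0=u_0+u_1+u_2$ with $u_0(x,t)=f*\phi_t(x)$, $u_1=-t\,\partial_t u_0$, $u_2=\int_0^t s\,\partial_s^2 u_0\,ds$, together with the key estimate \eqref{core}, $|\partial_t^k u_0(x,t)|\lesssim t^{1-k}\mathcal{O}_1(f;B(x,t))$, splitting the $s$-integrals at the relevant scales ($\delta$ for (i)--(ii), $\delta$ and $A$ for (iii)). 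You instead prove a ``second master estimate'' by dyadic telescoping with degree-one polynomials: using \eqref{Cs}, $|[P^1_{Q_{k+1}(w)}(f)-P^1_{Q_k(w)}(f)](w)|\lesssim\ell(Q_k(w))\,\mathcal{O}_1(f;Q_k(w))$, sum geometrically, and observe that $\Delta^2_y$ annihilates the affine $P^1_{Q_0}(f)$. This mirrors Lemma~\ref{lem:250509-1} and the proof of Proposition~\ref{prop-lip}, but upgraded from averages to linear polynomials, and yields the same bound $|\Delta^2_y f(x)|/|y|\lesssim\sup_{\ell(Q)\le 4|y|}\mathcal{O}_1(f;Q)$ (with the evident localizations for (ii) and the head/tail split at $k=M-K$ for (iii)) as the paper derives from \eqref{core}. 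Both approaches buy something: yours is more elementary (no auxiliary one-parameter semigroup or integration by parts in $t$), is structurally parallel to the $\alpha\in(0,1)$ case, and makes the absolute convergence of the series and the appearance of the $\sup$ over small cubes completely transparent; the paper's convolution method is the mechanism the authors advertise as the novelty of the article and scales more mechanically to $\alpha>1$ where one would telescope with degree-$\lfloor\alpha\rfloor$ polynomials. Two small bookkeeping points in your write-up: the ``monotonicity'' $\mathcal{O}_1(f;Q')\le(|Q''|/|Q'|)^{1+1/n}\mathcal{O}_1(f;Q'')$ holds only up to a dimensional constant (because $P^1_{Q'}(f)$ defined by moment conditions is equivalent to, not equal to, the $L^1$-minimizer), and the auxiliary cube $Q_0$ in the (ii) ``only if'' step sits inside $cQ+z$ for some dimensional $c$ rather than exactly $4Q+z$; neither affects the argument.
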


\begin{proof}
We first show (i).
Note that the “if” part follows directly from Lemma~\ref{lem-2diff}(ii) with $X = L^1$.
Hence, it remains to prove the “only if” part.

Let $\phi$ be a infinitely differentiable function supported in $B(\mathbf{0},1)$.
For any $t\in(0,\infty)$, define $\phi_t(\cdot):=\frac{1}{t^n}\phi(\frac{\cdot}{t})$.
Then $\{\phi_t\}_{t\in(0,\infty)}$
is an approximation of the identity, and hence we have
$f_0 := \lim_{t \to 0^+} f * \phi_t = f$ almost everywhere on $\mathbb{R}^n.$

For any $(x,t) \in \mathbb{R}^n\times(0,\infty)$,
define
\[
u_0(x,0) := f_0(x),\ u_0(x,t) := f * \phi_t(x),\
u_1(x,t) := -t \frac{\partial}{\partial t} u_0(x,t),
\]
and
\[
u_2(x,t) := \int_0^t s \frac{\partial^2}{\partial s^2} u_0(x,s) \, ds.
\]

Applying the Newton--Leibniz formula and integration by parts,
we obtain the following identity: for any $(x,t) \in \mathbb{R}^n\times(0,\infty)$,
\begin{align}\label{u012}
f_0(x) &= u_0(x,0)
= u_0(x,t) - \int_0^t \frac{\partial}{\partial s} u_0(x,s) \, ds \notag\\
&= u_0(x,t) - t \frac{\partial}{\partial t} u_0(x,t)
+ \int_0^t s \frac{\partial^2}{\partial s^2} u_0(x,s) \, ds \notag \\
&= u_0(x,t) + u_1(x,t) + u_2(x,t).
\end{align}

From the assumption
\[
\lim_{a \to 0^+} \sup_{\ell(Q) \le a} \mathcal{O}_1(f;Q) = 0,
\]
we deduce that, for any $\varepsilon\in(0,\infty)$, there exists $\delta\in(0,\infty)$ such that, for any $r\in(0,\delta]$ and any $x \in \mathbb{R}^n$,
\begin{align}\label{O1small}
\mathcal{O}_1(f; B(x,r)) < \varepsilon.
\end{align}

It remains to estimate the second-order differences of each $u_j$ with $j \in \{0,1,2\}$.
We first establish the following key estimate: for any $k \in \mathbb{Z}_+$ and $(x,t)\in\mathbb{R}^n\times(0,\infty)$,
\begin{align}\label{core}
\left| \frac{\partial^k}{\partial t^k} u_0(x,t) \right|
\lesssim t^{1-k} \mathcal{O}_1(f; B(x,t)).
\end{align}

Indeed, define $\widetilde{f}_x(y) := f(x - y)$ for any $y \in \mathbb{R}^n$ and
let $a_t := t^{k-1} \frac{\partial^k}{\partial t^k} \phi_t$ for any $t \in (0,\infty)$.
By \cite[p.\,301, Lemma 5.20]{gr85}, we conclude that $a_t$ satisfies
\[
\|a_t\|_{L^\infty}
\lesssim t^{-(n+1)}
\ \text{ and }\
\int_{{\mathbb R}^n}a_t(x)\, dx=\int_{{\mathbb R}^n}x_j a_t(x)\, dx=0
\]
for any $j\in\{1,2,\ldots,n\}$.
Since
\[
\frac{\partial^k}{\partial t^k} u_0(x,t)
=\int_{\mathbb{R}^n} f(x-y) \frac{\partial^k}{\partial t^k} \phi_t(y) \, dy
=t^{1-k}\int_{\mathbb{R}^n} \left[\widetilde{f}_x(y)
- P^1_{B(\mathbf{0},t)}(\widetilde{f}_x)(y)\right] a_t(y) \, dy,
\]
it follows that
\begin{align*}
\left| \frac{\partial^k}{\partial t^k} u_0(x,t) \right|
&\lesssim t^{1-k}\|a_t\|_{L^\infty}
\int_{B(\mathbf{0},t)} \left| \widetilde{f}_x(y)
- P^1_{B(\mathbf{0},t)}(\widetilde{f}_x)(y) \right| \, dy \notag\\
&\lesssim t^{1-k}|B(\mathbf{0},t)|^{-(1 + \frac{1}{n})}
\int_{B(\mathbf{0},t)} \left| \widetilde{f}_x(y)
- P^1_{B(\mathbf{0},t)}(\widetilde{f}_x)(y) \right| \, dy \notag\\
&= t^{1-k} \mathcal{O}_1(\widetilde{f}_x; B(\mathbf{0},t))
\lesssim t^{1-k} \mathcal{O}_1(f; B(x,t)).
\end{align*}

We now estimate each $u_j$ by using this bound.
Fix $h \in \mathbb{R}^n \setminus \{\mathbf{0}\}$ with $|h| < \delta$.
For any $s \in (0,|h|)$, we have $s \le |h| < \delta$.
Using \eqref{core} with $k=2$ and $t = |h|$, together with \eqref{O1small}, we obtain, for any $x \in \mathbb{R}^n$,
\begin{align}\label{u2xh}
|u_2(x,|h|)|
&= \left| \int_0^{|h|} s \frac{\partial^2}{\partial s^2} u_0(x,s) \, ds \right|
\le \int_0^{|h|} s \left| \frac{\partial^2}{\partial s^2} u_0(x,s) \right| \, ds \notag\\
&\lesssim \int_0^{|h|} \mathcal{O}_1(f; B(x,s)) \, ds
\lesssim \varepsilon |h|,
\end{align}
which implies
\begin{align}\label{u2}
\frac{|\Delta^2_h u_2(x,|h|)|}{|h|} \lesssim \varepsilon.
\end{align}

Similarly, using \eqref{core} with $k \in \{0, 1\}$ and $t = |h|$, we find
\begin{align}\label{u01}
|u_0(x,|h|)| + |u_1(x,|h|)|
\lesssim |h| \mathcal{O}_1(f; B(x,|h|)),
\end{align}
which gives
\begin{align}\label{u12diff}
\frac{|\Delta^2_h u_0(x,|h|)|}{|h|} + \frac{|\Delta^2_h u_1(x,|h|)|}{|h|}
\lesssim \varepsilon.
\end{align}

Combining \eqref{u012}, \eqref{u2}, and \eqref{u12diff}, we conclude that
\begin{align}\label{u012diff}
\frac{|\Delta^2_h f_0(x)|}{|h|}
\le \frac{|\Delta^2_h u_0(x,|h|)|}{|h|}
+ \frac{|\Delta^2_h u_1(x,|h|)|}{|h|}
+ \frac{|\Delta^2_h u_2(x,|h|)|}{|h|}
\lesssim \varepsilon.
\end{align}
This finishes the proof of the “only if” part of (i).

Now we prove the equivalence in (ii).

The “if” part follows directly from Lemma~\ref{lem-2diff}(ii) with $X = L^1$ and $Q$ replaced by $Q + z$.

To prove the “only if” part, choose $\varepsilon\in(0,\infty)$ and $\delta\in(0,\infty)$ as in part (i), and decompose
\[
f_0(x) = u_0(x,t) + u_1(x,t) + u_2(x,t),
\text{ for any }(x,t) \in \mathbb{R}^n \times (0,\infty),
\]
as in \eqref{u012}.
It suffices to consider $h \in \mathbb{R}^n$ with $|h| \ge \delta$
because the case $|h| < \delta$ has already been treated in (i).

Choose $M\in(0,\infty)$ sufficiently large such that,
for any $x \in \mathbb{R}^n$ with $|x| > M$,
\begin{align}\label{MLarge}
\left( \frac{|h|}{\delta} \right)^{n+1} \mathcal{O}_1\left(f; B(x,|h|) \right) < \varepsilon.
\end{align}

Using this and arguments similar to those in the estimation of \eqref{u2xh},
we obtain, for any $|x| > M$,
\begin{align*}
|u_2(x,|h|)|
&= \left| \int_0^{|h|} s \frac{\partial^2}{\partial s^2} u(x,s)\, ds \right| \\
&\lesssim \int_0^{|h|} \mathcal{O}_1(f; B(x,s))\, ds
= \int_0^\delta \mathcal{O}_1(f; B(x,s))\, ds
+ \int_\delta^{|h|} \mathcal{O}_1(f; B(x,s))\, ds \\
&\lesssim \int_0^\delta \varepsilon\, ds
+ \int_\delta^{|h|} \left( \frac{|h|}{\delta} \right)^{n+1}
\mathcal{O}_1(f; B(x,|h|))\, ds \\
&\lesssim \delta \varepsilon + (|h| - \delta)
\left( \frac{|h|}{\delta} \right)^{n+1} \mathcal{O}_1(f; B(x,|h|))\lesssim \varepsilon |h|,
\end{align*}
and hence \eqref{u2} remains valid.

Moreover, combining \eqref{MLarge} with \eqref{u01}, we find that
\[
|u_0(x,|h|)| + |u_1(x,|h|)| \lesssim |h| \mathcal{O}_1(f; B(x,|h|)) \lesssim \varepsilon |h|,
\]
which implies that both \eqref{u12diff} and \eqref{u012diff} still hold.

This finishes the proof of the “only if” part of (ii).

Let $Q$ be a cube centered at $x_0$ with edge length $2r$, where $r$ is sufficiently large to be determined later. As in Lemma~\ref{lem-2diff}, we estimate the oscillation using the second-order difference.

Given any $\varepsilon \in (0,\infty)$, choose $R \in (1,\infty)$ such that
\begin{align}\label{R-large}
\sup_{{x,\,z \in \mathbb{R}^n,\,|z| \ge R}} \frac{|\Delta^2_z f(x)|}{|z|} < \varepsilon.
\end{align}
Next, choose $r \in (R, \infty)$ large enough so that
\begin{align}\label{r-large}
\frac{R}{r} \sup_{{x,\,z \in \mathbb{R}^n,\,|z| \le R}} \frac{|\Delta^2_z f(x)|}{|z|} < \varepsilon.
\end{align}

Let $\phi$ and $\psi$ be as in the proof of Lemma~\ref{lem-2diff}. Using the estimate from \eqref{Dpsi}, together with \eqref{R-large} and \eqref{r-large}, we obtain, for any multi-index $\gamma \in (\mathbb{Z}_+)^n$ with $|\gamma| = 2$,
\begin{align*}
|2D^\gamma \psi(x)|
&\le r^{-1} \int_{B(\mathbf{0},r) \setminus B(\mathbf{0},R)} |(D^\gamma \phi)_r(y)|\, \frac{|y|}{r}
\sup_{\substack{x,\,z \in \mathbb{R}^n\\ |z| \ge R}} \frac{|\Delta^2_z f(x)|}{|z|} \,dy \\
&\quad + r^{-1} \int_{B(\mathbf{0},R)} |(D^\gamma \phi)_r(y)|\, \frac{|y|}{r}
\sup_{\substack{x,\,z \in \mathbb{R}^n\\ |z| \le R}} \frac{|\Delta^2_z f(x)|}{|z|} \,dy \\
&\le r^{-1} \int_{B(\mathbf{0},r) \setminus B(\mathbf{0},R)} |(D^\gamma \phi)_r(y)|
\sup_{\substack{x,\,z \in \mathbb{R}^n\\ |z| \ge R}} \frac{|\Delta^2_z f(x)|}{|z|} \,dy \\
&\quad + r^{-1} \int_{B(\mathbf{0},R)} |(D^\gamma \phi)_r(y)|\, \frac{R}{r}
\sup_{\substack{x,\,z \in \mathbb{R}^n\\ |z| \le R}} \frac{|\Delta^2_z f(x)|}{|z|} \,dy \\
&\le 2r^{-1} \|D^\gamma \phi\|_{L^1} \varepsilon \sim r^{-1} \varepsilon.
\end{align*}

By the Taylor theorem, this implies that
$|\psi(x) - P_1(x)| \lesssim r^{-1} \varepsilon |x - x_0|^2 \lesssim r \varepsilon,$
where $P_1$ is the first-order Taylor polynomial of $f$ at $x_0$.

Moreover, applying the argument  used in the proof of Lemma~\ref{lem-2diff}(i),
we also conclude that
$|f(x) - \psi(x)| \lesssim r \varepsilon,$
which yields
\begin{align}\label{f-P1}
|f(x) - P_1(x)| \lesssim r \varepsilon.
\end{align}
This in turn gives us
\[
\mathcal{O}_1(f; Q) \lesssim r^{-(n+1)} \int_Q |f(x) - P_1(x)|\, dx \lesssim \varepsilon,
\]
which completes the proof of the ``if'' part of (iii).
It remains to prove the ``only if'' part of (iii).
We focus on the case where $|h|$ is sufficiently large, with the precise bound to be determined later.

Choose $\varepsilon,\delta\in(0,\infty)$ as in part (i), and decompose
\begin{equation*}
  f_0(x) = u_0(x,t) + u_1(x,t) + u_2(x,t),
\text{ for any } (x,t) \in \mathbb{R}^n \times (0,\infty),
\end{equation*}
as in \eqref{u012}.

By the assumption, there exists $A \in (0,\infty)$ such that, for any $s \in [A,\infty)$,
\begin{equation}\label{O1large}
  \mathcal{O}_1(f; B(x,s)) < \varepsilon.
\end{equation}

Now, for any $h \in \mathbb{R}^n$ with
$|h| > (A - \delta)\left(\frac{A}{\delta}\right)^{n+1},$
we estimate $u_2(x,|h|)$ as follows
\begin{align*}
  |u_2(x,|h|)| &\leq \int_0^{|h|} s\left|\frac{\partial^2}{\partial s^2} u(x,s)\right| \,ds \\
  &\lesssim \left( \int_0^{\delta} + \int_{\delta}^A + \int_A^{|h|} \right) \mathcal{O}_1(f; B(x,s)) \,ds \\
  &\lesssim \delta \varepsilon + (A - \delta)\left(\frac{A}{\delta}\right)^{n+1} \mathcal{O}_1(f; B(x,A)) + (|h| - A)\varepsilon\lesssim \varepsilon |h|,
\end{align*}
which implies that \eqref{u2} still holds for large $|h|$.

The estimates for $u_0(x,|h|)$ and $u_1(x,|h|)$ in the case of
large $|h|$ directly follow from \eqref{core} and \eqref{O1large}.
Hence, \eqref{u12diff} and consequently \eqref{u012diff} still hold.

This finishes the proof of the ``only if'' part of (iii) and thus Proposition~\ref{prop-zyg}.
\end{proof}

\section{Proof of Theorem \ref{thm1} and Applications to Specific Function Spaces}\label{s4}

In this section, we first prove Theorem \ref{thm1} in Subsection \ref{s4.1}
and then apply Theorem \ref{thm1} to some specific function spaces in Subsection \ref{s4.2}.

\subsection{Proof of Theorem \ref{thm1}}\label{s4.1}

In this subsection, we prove Theorem \ref{thm1} by distinguishing three cases:
$\alpha = 0$, $\alpha \in (0,1)$, and $\alpha = 1$ separately.

\begin{proof}[Proof of Theorem \ref{thm1}]
We begin with the case $\alpha = 0$.
By Lemma \ref{lem-oscX}, it suffices to show that
\[
\mathrm{YMO} := \mathrm{Y}\mathcal{L}_0
\subset \mathrm{Y}\mathcal{L}_{0,X} =: \mathrm{YMO}_X
\]
for any $\mathrm{Y} \in \{\mathrm{V,\,X,\,C}\}$.
We treat the three cases separately.

Let $f \in \mathrm{VMO}$.
By Lemma \ref{lem-vcxmo}(i), for any $\epsilon \in (0, \infty)$, there exists a function
$g \in C_{\mathrm{u}} \cap \mathrm{BMO}$
such that $\|f - g\|_{\mathrm{BMO}} < \epsilon$.
Applying this and Proposition \ref{bmox}, we obtain
$\|f - g\|_{\mathrm{BMO}_X} \lesssim \epsilon.$
Therefore,
\begin{align*}
\mathcal{O}_X(f; Q)
\le \mathcal{O}_X(f - g, Q) + \mathcal{O}_X(g, Q)
\le \|f - g\|_{\mathrm{BMO}_X} + \mathcal{O}_X(g, Q)
\lesssim \epsilon + \mathcal{O}_X(g, Q).
\end{align*}
Since $g \in C_{\mathrm{u}}$, it follows from Lemma \ref{lem-oscX} that
\[
\lim_{a \to 0^+} \sup_{|Q| \le a} \mathcal{O}_X(f; Q) \lesssim \epsilon.
\]
As $\epsilon\in(0,\infty)$ is arbitrary, we conclude $f \in \mathrm{VMO}_X$.

Next, let $f \in \mathrm{XMO}$.
By Lemma \ref{lem-vcxmo}(ii), for any $\epsilon \in (0, \infty)$, there exists a function
$g \in B_\infty \cap \mathrm{BMO}$
such that $\|f - g\|_{\mathrm{BMO}} < \epsilon$.
Since $B_\infty \subset C_{\mathrm{u}}$, we have
\[
\lim_{a \to 0^+} \sup_{|Q| \le a} \mathcal{O}_X(g, Q) = 0.
\]
Moreover, by the definition of $B_\infty$, Lemma \ref{lem-oscX},
and the mean value theorem, we obtain
\[
\mathcal{O}_X(g, Q + x) \le \sup_{y, z \in Q + x} |g(y) - g(z)|
\lesssim \sup_{\xi \in Q + x} |\nabla g(\xi)| \, \ell(Q + x) \to 0
\]
as $x \to \infty$. Hence, $f \in \mathrm{XMO}_X$.

Finally, we consider $f \in \mathrm{CMO}$.
By a similar density argument and the fact that
$C_{\rm c}^\infty \subset \mathrm{CMO}$, it suffices to show
$\mathcal{O}_X(g, Q) \to 0$ as $|Q| \to \infty$
for any $g \in C_{\rm c}^\infty$.
Indeed, we have
\[
\mathcal{O}_X(g, Q)
= \frac{1}{\|\mathbf{1}_Q\|_X} \left\| (g - \langle g \rangle_Q) \mathbf{1}_Q \right\|_X
\le \frac{\|g\|_X}{\|\mathbf{1}_Q\|_X} + \frac{\|g\|_{L^1}}{|Q|} \to 0
\]
as $|Q| \to \infty$, by Lemma \ref{const} and the Fatou property of $X$.
Thus, $f \in \mathrm{CMO}_X$, completing the case $\alpha = 0$.

Next, we consider the case $\alpha \in (0,1)$.
By Lemma \ref{lem-oscX}, it suffices to prove that
\begin{align}\label{YMO}
\mathrm{YMO}_\alpha := \mathrm{Y}\mathcal{L}_{\alpha}
\subset \mathrm{Y}\mathcal{L}_{\alpha,X} =: \mathrm{YMO}_{\alpha,X}
\end{align}
for any $\mathrm{Y} \in \{\mathrm{V}, \mathrm{X}, \mathrm{C}\}$.

From Lemma \ref{lem-oscX} and Proposition \ref{prop-lip}, we deduce that
\begin{align*}
\lim_{a \to 0^+} \sup_{\ell(Q) \le a} \mathcal{O}_{\alpha, X}(f; Q)
&\lesssim \lim_{a \to 0^+} \sup_{\ell(Q) \le a}
\sup_{\genfrac{}{}{0pt}{}{x,\,y \in Q}{x \neq y}}
\frac{|f(x) - f(y)|}{|x - y|^\alpha} \\
&\lesssim \lim_{a \to 0^+}
\sup_{\genfrac{}{}{0pt}{}{x,\,y \in {\mathbb R}^n}{0 < |x - y| \le \sqrt{n}a}}
\frac{|f(x) - f(y)|}{|x - y|^\alpha}
= 0
\end{align*}
and
\begin{align*}
\lim_{z \to \infty} \mathcal{O}_{\alpha,X}(f; Q+z)
&\le \lim_{z \to \infty} \sup_{\genfrac{}{}{0pt}{}{x,\,y \in Q+z}{x \neq y}} \frac{|f(x) - f(y)|}{|x - y|^\alpha}
= 0.
\end{align*}
Hence, \eqref{YMO} holds for $\mathrm{Y} \in \{\mathrm{V}, \mathrm{X}\}$.

To complete the proof, it remains to show that
$\mathrm{CMO}_\alpha \subset \mathrm{CMO}_{\alpha,X}.$
Let $Q$ be a large cube as in \eqref{Qlarge}.
It suffices to prove that
\begin{equation*}
\frac{|Q|^{-\frac{\alpha}{n}}}{\|\mathbf{1}_Q\|_X}
\left\| \left(f - \langle f \rangle_Q\right) \mathbf{1}_Q \right\|_X \to 0
\quad \text{as }\ \ell(Q) \to \infty.
\end{equation*}

We first estimate
\begin{align*}
\frac{|Q|^{-\frac{\alpha}{n}}}{\|\mathbf{1}_Q\|_X}
\left\| \left(f - \langle f \rangle_Q\right) \mathbf{1}_Q \right\|_X
&\le \frac{|Q|^{-\frac{\alpha}{n}}}{\|\mathbf{1}_Q\|_X}
\left\| \int_Q |f(\cdot) - f(y)| \, dy \, \mathbf{1}_Q \right\|_X  \\
&= \frac{|Q|^{-\frac{\alpha}{n} - 1}}{\|\mathbf{1}_Q\|_X}
\left\| \int_Q |f(\cdot) - f(y)| \, dy \, \mathbf{1}_Q \right\|_X.
\end{align*}

We split the integral over $Q$ into near and far regions
by using a fixed parameter $R\in(0,\infty)$ to find that
\begin{align*}
\frac{|Q|^{-\frac{\alpha}{n}}}{\|\mathbf{1}_Q\|_X}
\left\| \left(f - \langle f \rangle_Q\right) \mathbf{1}_Q \right\|_X
&\le \frac{|Q|^{-\frac{\alpha}{n} - 1}}{\|\mathbf{1}_Q\|_X}
\left\| \int_{Q \setminus B_R(y)} |f(\cdot) - f(y)| \, dy \, \mathbf{1}_Q \right\|_X \notag \\
&\quad + \frac{|Q|^{-\frac{\alpha}{n} - 1}}{\|\mathbf{1}_Q\|_X}
\left\| \int_{Q \cap B_R(y)} |f(\cdot) - f(y)| \, dy \, \mathbf{1}_Q \right\|_X.
\end{align*}

On $Q \setminus B_R(y)$, we use that $f \in \mathrm{CMO}_\alpha$ implies that,
for any $\epsilon\in(0,\infty)$, there exists $R = R(\epsilon)$ such that
$|f(x) - f(y)| \le \epsilon |x - y|^\alpha$ whenever $|x - y| > R.$
Hence,
\begin{align*}
\frac{|Q|^{-\frac{\alpha}{n}}}{\|\mathbf{1}_Q\|_X}
\left\| \left(f - \langle f \rangle_Q\right) \mathbf{1}_Q \right\|_X
&\le \frac{|Q|^{-\frac{\alpha}{n} - 1}}{\|\mathbf{1}_Q\|_X}
\left\| \int_{Q \setminus B_R(y)} \epsilon |\cdot - y|^\alpha \, dy \, \mathbf{1}_Q \right\|_X \notag \\
&\quad + \frac{|Q|^{-\frac{\alpha}{n} - 1}}{\|\mathbf{1}_Q\|_X}
\left\| \int_{Q \cap B_R(y)} |\cdot - y|^\alpha \, dy \, \mathbf{1}_Q \right\|_X
\sup_{\genfrac{}{}{0pt}{}{x,\,y \in \mathbb{R}^n}{0 < |x - y| \le R}} \frac{|f(x) - f(y)|}{|x - y|^\alpha}.
\end{align*}

The first integral is bounded by $\epsilon \int_Q |\cdot - y|^\alpha \, dy \lesssim \epsilon |Q|^{\frac{\alpha}{n} + 1}$. The second integral is supported in a ball of radius $R$, so it is bounded by $R^{\alpha}\ell(Q)^n$. Thus,
\begin{align*}
\frac{|Q|^{-\frac{\alpha}{n}}}{\|\mathbf{1}_Q\|_X}
\left\| \left(f - \langle f \rangle_Q\right) \mathbf{1}_Q \right\|_X
&\lesssim \frac{|Q|^{-\frac{\alpha}{n} - 1}}{\|\mathbf{1}_Q\|_X}
\left\| \epsilon |Q|^{\frac{\alpha}{n} + 1} \mathbf{1}_Q \right\|_X  \\
&\quad + \frac{|Q|^{-\frac{\alpha}{n} - 1}}{\|\mathbf{1}_Q\|_X}
\left\| R^{\alpha + n} \mathbf{1}_Q \right\|_X
\sup_{\genfrac{}{}{0pt}{}{x,\,y \in \mathbb{R}^n}{0 < |x - y| \le R}} \frac{|f(x) - f(y)|}{|x - y|^\alpha}. \notag
\end{align*}

Simplifying, we obtain
\begin{align*}
\frac{|Q|^{-\frac{\alpha}{n}}}{\|\mathbf{1}_Q\|_X}
\left\| \left(f - \langle f \rangle_Q\right) \mathbf{1}_Q \right\|_X
&\lesssim \epsilon + \left[ \frac{R}{\ell(Q)} \right]^{\alpha}
\sup_{\genfrac{}{}{0pt}{}{x,\,y \in \mathbb{R}^n}{0 < |x - y| \le R}} \frac{|f(x) - f(y)|}{|x - y|^\alpha}.
\end{align*}

Using the assumptions that $f$ is uniformly H\"older continuous
of order $\alpha$ and $\ell(Q)$ is large,
we find that the second term is small. Choosing $\ell(Q) > R(\epsilon)$ large enough yields
\begin{equation*}
\frac{|Q|^{-\frac{\alpha}{n}}}{\|\mathbf{1}_Q\|_X}
\left\| \left(f - \langle f \rangle_Q\right) \mathbf{1}_Q \right\|_X
< 2\epsilon.
\end{equation*}

Since $\epsilon\in(0,\infty)$ was arbitrary, we conclude that $f \in \mathrm{CMO}_{\alpha,X}$, completing the proof of Theorem \ref{thm1} for $\alpha \in (0,1)$.

Now, we consider the case \( \alpha = 1 \).
By Lemma~\ref{lem-2diff}(iii), it suffices to prove that
\begin{align}
  \label{YL}
  \mathrm{Y}\mathcal{L}_{\alpha} \subset \mathrm{Y}\mathcal{L}_{\alpha,X}
\end{align}
for any \( \mathrm{Y} \in \{\mathrm{V},\,\mathrm{X},\,\mathrm{C}\} \).
Moreover, by combining Lemma~\ref{lem-2diff}(ii) and
both (ii) and (iii) of Proposition~\ref{prop-zyg},
we obtain \eqref{YL} for \( \mathrm{Y} \in \{\mathrm{V},\,\mathrm{X}\} \).
Thus, it remains to verify the inclusion
$\mathrm{C}\mathcal{L}_{\alpha} \subset \mathrm{C}\mathcal{L}_{\alpha,X}.$
Let \( f \in \mathrm{C}\mathcal{L}_{\alpha} \).
Then, by Proposition~\ref{prop-zyg}(iii), we have
\[
  \lim_{a \to \infty} \sup_{{x,\,y \in \mathbb{R}^n,\,|y| \ge a}} \frac{|\Delta^2_y f(x)|}{|y|} = 0.
\]
Let \( \varepsilon,r\in(0,\infty) \) be the same as in \eqref{r-large}.
Using Definition~\ref{def-bbfs}, \eqref{Cs}, and \eqref{f-P1}, we conclude that, for any cube \( Q \subset \mathbb{R}^n \) with edge length \( \ell(Q) \ge 2r \),
\begin{align*}
  \left\|\left[f - P^1_Q(f)\right]\mathbf{1}_Q\right\|_X
  &\le \left\|\left[f - P_1\right]\mathbf{1}_Q\right\|_X
      + \left\|\left[P^1_Q(P_1 - f)\right]\mathbf{1}_Q\right\|_X \\
  &\lesssim \left\|\left[f - P_1\right]\mathbf{1}_Q\right\|_X
      + \left\| \fint_Q |f(y) - P_1(y)|\,dy \, \mathbf{1}_Q \right\|_X \\
  &\lesssim |Q|^{1/n} \varepsilon \|\mathbf{1}_Q\|_X.
\end{align*}
Therefore,
\begin{align*}
  \frac{|Q|^{-1/n}}{\|\mathbf{1}_Q\|_X}
  \left\|\left[f - P^1_Q(f)\right]\mathbf{1}_Q\right\|_X
  \lesssim \varepsilon.
\end{align*}
Since \( \varepsilon\in(0,\infty) \) was arbitrary, this implies that
\( f \in \mathrm{C}\mathcal{L}_{\alpha,X} \),
which completes the proof of Theorem~\ref{thm1} in the case \( \alpha = 1 \).

Combining all three cases with Remark~\ref{rem-2}(iv), we conclude the proof of Theorem~\ref{thm1}.
\end{proof}
\begin{remark}
Another form of oscillation $\widetilde{\mathcal{O}}_{1,X}(f; Q)$
arises when $\alpha = 1$, defined by setting
\[
\widetilde{\mathcal{O}}_{1,X}(f; Q) := \frac{|Q|^{-1/n}}{\|\mathbf{1}_Q\|_X}
\left\| \left(f - \langle f \rangle_Q \right) \mathbf{1}_Q \right\|_X.
\]
In particular, when $X = L^1$, we omit the subscript $X$ and simply write $\widetilde{\mathcal{O}}_1(f; Q)$.

It was shown by Meyer~\cite{m64} that
\[
\sup_{\text{cube } Q} \widetilde{\mathcal{O}}_1(f; Q)
\sim \|f\|_{\mathrm{Lip}}
:= \sup_{\genfrac{}{}{0pt}{}{x,\,y \in \mathbb{R}^n}{x \neq y}} \frac{|f(x) - f(y)|}{|x - y|}.
\]
One can verify that both Lemma~\ref{lem-oscX} and Proposition~\ref{prop-lip} remain valid when setting $\alpha = 1$ and replacing $\mathcal{O}_{\alpha,X}$ with $\widetilde{\mathcal{O}}_{1,X}$. Consequently, Theorem~\ref{thm1} also holds with $\mathcal{O}_{1,X}$ replaced by $\widetilde{\mathcal{O}}_{1,X}$.
\end{remark}
\begin{remark}
In some recent studies \cite{dlyyz,dpgyyz,is17}, the assumption that the Hardy--Littlewood maximal operator $M$ is bounded on $X'$ can be replaced by weaker conditions, such as:
\begin{itemize}
\item[\rm (a)] the family of centered ball average operators $\{B_r\}_{r\in(0,\infty)}$ is uniformly bounded on $X$, where $B_r$ is defined as in \eqref{Mf};
\item[\rm (b)] the operator $M$ is \emph{weakly bounded} on $X'$, that is, there exists a positive constant $C$ such that, for any $f \in X'$ and any $\lambda\in(0,\infty)$,
$\| \mathbf{1}_{\{x \in \mathbb{R}^n : M(f)(x) > \lambda\}}\|_{X'} \le C \lambda^{-1} \|f\|_{X'}.$
\end{itemize}

With either of these conditions, various useful results, such as Lemmas \ref{reholder}, \ref{lem-oscX}, and \ref{lem-2diff}, still hold even without the full boundedness of $M$ on $X'$. However, in the context of Theorem \ref{thm1}, such replacements are not valid.

To see this, consider the case where $X = L^\infty$.
Then the space ${\rm BMO}_X$ coincides with the quotient space $L^\infty / \mathbb{C}$,
where $\mathbb{C}$ denotes the subspace of $L^\infty$ consisting of
functions equal almost everywhere to a constant.
Note that $M$ is unbounded on $X' = L^1$,
although both conditions (a) and (b) are easily verified in this case.

Indeed, for any cube $Q \subset \mathbb{R}^n$,
\begin{align*}
\mathcal{O}_X(f; Q)
&= \frac{1}{\|\mathbf{1}_Q\|_X} \left\| (f - \langle f \rangle_Q)\mathbf{1}_Q \right\|_X
= \left\| (f - \langle f \rangle_Q)\mathbf{1}_Q \right\|_{L^\infty} \\
&= \left\| \frac{1}{|Q|} \int_Q [f(\cdot) - f(y)] \, dy \cdot \mathbf{1}_Q \right\|_{L^\infty}
\le \operatorname*{ess\,sup}_{x, y \in Q} |f(x) - f(y)| \\
&\le \operatorname*{ess\,sup}_{x, y \in Q} \left( |f(x) - \langle f \rangle_Q| + |\langle f \rangle_Q - f(y)| \right) \\
&= 2 \operatorname*{ess\,sup}_{x \in Q} |f(x) - \langle f \rangle_Q|
= 2 \mathcal{O}_X(f; Q),
\end{align*}
and hence
\[
\|f\|_{{\rm BMO}_X} \sim \sup_Q \operatorname*{ess\,sup}_{x, y \in Q} |f(x) - f(y)|,
\]
where the supremum is taken over all cubes $Q \subset \mathbb{R}^n$.

This implies that both Proposition \ref{bmox} and Theorem \ref{thm1}
when $X = L^\infty$ fail because there exist unbounded functions
in ${\rm BMO}$, for example, the logarithmic function.
Indeed, for any $x\in \mathbb{R}^n$, let
\begin{align*}
f(x)=\begin{cases}
\log\log (\frac{1}{|x|}), &|x|\le \frac{1}{e},\\
0, &|x|> \frac{1}{e}.
\end{cases}
\end{align*}
Then it is well known that $f\in {\rm VMO}\subset {\rm BMO}$. However,
$$\|f\|_{{\rm BMO}_{L^\infty}} \sim \sup_Q \operatorname*{ess\,sup}_{x, y \in Q} |f(x) - f(y)|=\infty,$$
which shows that $f\notin {\rm BMO}_{L^\infty}$ and hence $f\notin {\rm VMO}_{L^\infty}$.
Thus, both Proposition \ref{bmox} and Theorem \ref{thm1} when $X = L^\infty$ fail.
\end{remark}

\subsection{Applications to Specific Function Spaces}\label{s4.2}

In this subsection, we use Theorem \ref{thm1} to some specific function spaces
including weighed Lebesgue spaces, variable Lebesgue spaces, mixed-norm Lebesgue spaces,
Morrey spaces, grand Besov--Bourgain--Morrey spaces, Lorentz spaces, and Herz spaces.
All these results, namely Corollaries \ref{coro-wLp}, \ref{coro-vLp}, \ref{coro-mLp},
\ref{coro-morrey}, \ref{coro-gbbm}, \ref{coro-lor}, and \ref{coro-herz},
are new.
Using their definitions, it is easy to verify that all these function spaces are special
case of ball Banach function spaces.
Thus, to apply Theorem \ref{thm1}, it suffices to verify that
the Hardy--Littlewood maximal operator $M$ is bounded on their associated spaces.
To this end, we need the following lemma, which connects
the associated space and the dual space.

\begin{lemma}\label{lem-acn}
Let $X$ be a ball Banach function space.
Denote $X'$ and $X^\ast$ respectively the associate space and the dual space of $X$.
Then $X'=X^\ast$ if and only if $X$ has an absolutely continuous norm,
that is, for any $f\in X$ and any decreasing sequences of measurable sets
$\{E_n\}_{n=1}^\infty$ with $E_n \downarrow 0$ almost everywhere on $\mathbb{R}^n$,
it holds that $\|f\mathbf{1}_{E_n}\|_X \downarrow 0$ as $n\to\infty$.
\end{lemma}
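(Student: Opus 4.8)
For the proof, I would adapt the classical duality theorem for Banach function spaces of Bennett and Sharpley \cite{bs} to the present setting, treating the two implications separately.

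For the ``if'' direction, assume that $X$ has absolutely continuous norm. The inclusion $X'\subset X^\ast$ always holds: by the H\"older inequality (Lemma \ref{holder}), each $g\in X'$ induces a bounded functional $L_g(f):=\int_{\mathbb{R}^n}fg$ with $\|L_g\|_{X^\ast}\le\|g\|_{X'}$, and $g\mapsto L_g$ is injective since $L_g=0$ forces $\int_S|g|=0$ for every ball $S$, hence $g=0$ almost everywhere. For the reverse inclusion, fix $L\in X^\ast$ and set $\nu(S):=L(\mathbf{1}_S)$ for bounded measurable sets $S$ (legitimate because $\mathbf{1}_S\in X$). Finite additivity of $\nu$ and its vanishing on null sets follow from the linearity of $L$ and the identification of elements of $X$ almost everywhere; countable additivity follows because, if $S$ is the disjoint union of $S_1,S_2,\dots$ inside a ball $B$ and $R_n:=\bigcup_{k>n}S_k$, then $\mathbf{1}_{R_n}=\mathbf{1}_B\mathbf{1}_{R_n}\downarrow 0$ everywhere, so absolute continuity gives $\|\mathbf{1}_{R_n}\|_X\to0$ and thus $|\nu(S)-\sum_{k\le n}\nu(S_k)|\le\|L\|_{X^\ast}\|\mathbf{1}_{R_n}\|_X\to0$. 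Hence $\nu$ is Lebesgue-absolutely continuous and finite on balls, and applying the Radon--Nikod\'ym theorem on an exhausting sequence of balls produces $g\in L^1_{\mathrm{loc}}$ with $L(\mathbf{1}_S)=\int_S g$ for all such $S$; by linearity, $L(s)=\int sg$ for every simple function $s$ with bounded support. Absolute continuity of the norm also makes such $s$ dense in $X$ (truncate $f\in X$ by $f\mathbf{1}_{\{|f|\le j\}\cap B(\mathbf{0},j)}$, control the tail via absolute continuity, and approximate the bounded, boundedly supported piece in $L^\infty$ using Definition \ref{def-bbfs}(ii)); from this one deduces $\|g\|_{X'}\le\|L\|_{X^\ast}<\infty$ by a routine limiting argument, so $g\in X'$ and $L=L_g$ by continuity. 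Thus $X^\ast=X'$.

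For the ``only if'' direction, I would argue the contrapositive by a gliding-hump construction. Suppose $X$ does not have absolutely continuous norm; then there exist $f\in X$, which we may take nonnegative, and sets $E_n\downarrow\emptyset$ almost everywhere with $\|f\mathbf{1}_{E_n}\|_X\ge\delta>0$ for all $n$ (after passing to a subsequence). Since $f\mathbf{1}_{E_n\setminus E_m}\uparrow f\mathbf{1}_{E_n}$ as $m\to\infty$, Definition \ref{def-bbfs}(iii) lets us choose inductively indices $n_1<n_2<\cdots$ for which the pairwise disjoint sets $F_k:=E_{n_k}\setminus E_{n_{k+1}}$, satisfying $\bigcup_k F_k=E_{n_1}$ almost everywhere, obey $\|f\mathbf{1}_{F_k}\|_X\ge\delta/2$. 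Writing $h_k:=f\mathbf{1}_{F_k}$, the operator $T\colon\ell^\infty\to X$ defined by $T(a):=\sum_k a_k h_k$ is well defined, since the sum is pointwise finite and dominated by $\|a\|_{\ell^\infty}|f|\in X$, and, using Definition \ref{def-bbfs}(ii) together with the disjointness of the supports (so that $|T(a)|\mathbf{1}_{F_j}=|a_j|h_j$), it satisfies $(\delta/2)\|a\|_{\ell^\infty}\le\|T(a)\|_X\le\|f\|_X\|a\|_{\ell^\infty}$; hence $T$ is an isomorphism onto a closed subspace of $X$, so $T^\ast\colon X^\ast\to(\ell^\infty)^\ast$ is surjective. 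On the other hand, if $X^\ast=X'$, then for any $\psi\in X'$, interchanging sum and integral (justified by monotone convergence and Lemma \ref{holder}) yields $T^\ast L_\psi=(c_k)_k$ with $c_k=\int h_k\psi$ and $\sum_k|c_k|\le\int(\sum_k h_k)|\psi|\le\|f\mathbf{1}_{E_{n_1}}\|_X\|\psi\|_{X'}<\infty$, so $T^\ast L_\psi$ belongs to the canonical image of $\ell^1$ in $(\ell^\infty)^\ast$. Therefore $T^\ast(X^\ast)$ is contained in $\ell^1$, a proper subspace of $(\ell^\infty)^\ast$ (the latter containing, for instance, a Banach limit), contradicting surjectivity. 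Hence $X^\ast=X'$ forces $X$ to have absolutely continuous norm.

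The measure-theoretic bookkeeping in the first direction is routine. The hard part will be the gliding-hump extraction in the second direction — namely, using Definition \ref{def-bbfs}(iii) to keep the increments $\|f\mathbf{1}_{E_{n_k}\setminus E_{n_{k+1}}}\|_X$ uniformly bounded below, and then verifying that the induced map $T$ is bounded below — together with the clean identification $T^\ast L_\psi\in\ell^1$ that produces the contradiction.
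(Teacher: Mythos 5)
Your proof is correct and complete, but it is worth emphasizing that the paper itself supplies no proof of Lemma~\ref{lem-acn}: it simply states that the result is ``a part of \cite[Lemma 1.7.7]{lyh22}; see also \cite[Proposition 3.15]{ln24}'' and omits all details. What you have produced is therefore a self-contained argument that the paper chose to outsource to its references, and it follows the classical Bennett--Sharpley blueprint adapted to the ball-Banach setting. The ``if'' direction (absolute continuity implies $X^\ast=X'$) is handled exactly as one would expect: the H\"older inequality gives the embedding $X'\hookrightarrow X^\ast$, and for the converse you build a $\sigma$-additive set function $\nu(S)=L(\mathbf 1_S)$ on bounded measurable sets, use absolute continuity of the norm to pass from finite to countable additivity, run Radon--Nikod\'ym over an exhausting sequence of balls, and then exploit density of boundedly supported simple functions (itself a consequence of absolute continuity plus Definition~\ref{def-bbfs}(ii) and (iv)) to upgrade the representation $L(s)=\int sg$ to all of $X$ and to bound $\|g\|_{X'}$ by $\|L\|_{X^\ast}$. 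The ``only if'' direction via the gliding hump is likewise sound: the Fatou axiom (Definition~\ref{def-bbfs}(iii)) lets you peel off disjoint annular pieces $F_k=E_{n_k}\setminus E_{n_{k+1}}$ with $\|f\mathbf 1_{F_k}\|_X\ge\delta/2$; the disjointness of supports gives the two-sided bound $(\delta/2)\|a\|_{\ell^\infty}\le\|T(a)\|_X\le\|f\|_X\|a\|_{\ell^\infty}$ and hence surjectivity of $T^\ast$ onto $(\ell^\infty)^\ast$; and the interchange of sum and integral (dominated by $\|a\|_{\ell^\infty}|f\psi|\in L^1$) shows $T^\ast L_\psi$ lands in $\ell^1$, which is a proper subspace of $(\ell^\infty)^\ast$ by the existence of Banach limits.

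Two small polishing remarks. First, since $\|f\mathbf 1_{E_n}\|_X$ is automatically monotone nonincreasing in $n$, the parenthetical ``after passing to a subsequence'' is unnecessary: failure of absolute continuity directly gives a single $\delta>0$ with $\|f\mathbf 1_{E_n}\|_X\ge\delta$ for all $n$. Second, the ``routine limiting argument'' for $\|g\|_{X'}\le\|L\|_{X^\ast}$ deserves one sentence: one tests $L$ against $f_n:=\min(|f|,n)\,\mathbf 1_{B(\mathbf 0,n)\cap\{|g|\le n\}}\,\mathrm{sgn}(g)$, uses the simple-function approximation already established to get $L(f_n)=\int f_n g$, and then applies monotone convergence to conclude $\int|fg|\le\|L\|_{X^\ast}\|f\|_X$. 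Neither point is a gap, merely a place where the write-up compresses a step that a referee would want spelled out.

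Given that the paper omits the proof entirely, your argument cannot be ``the same route as the paper''; it is, however, the standard route in the cited literature (Bennett--Sharpley, and its ball-space adaptation in \cite{lyh22,ln24}), and it is correct.
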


Lemma \ref{lem-acn} is a part of \cite[Lemma 1.7.7]{lyh22};
see also \cite[Proposition 3.15]{ln24}.
So we omit the details here.

Next, we consider specific function spaces.

{\bf Weighted Lebesgue spaces.} \quad
Muckenhoupt \cite{m72} introduced the $A_p$-weight which characterizes
the boundedness of the Hardy-Littlewood maximal operator $M$ on weighted Lebesgue spaces.
From then on, there exist numerous studies on the $A_p$-weight in harmonic analysis.
We refer to the classical monograph of Garc\'ia-Cuerva and Rubio de Francia \cite{gr85}
for a systemic study of weighted theory and related topics.
Recall the definitions of $A_p$-weights and weighed Lebesgue spaces as follows.
\begin{definition}
Let $p\in[1,\infty)$. The \emph{class $A_p$} of Muckenhoupt weights is defined to be the set of all
locally integrable and nonnegative functions $\omega$ on $\mathbb{R}^n$ such that,
when $p\in(1,\infty)$,
\begin{equation*}
[\omega]_{A_p}:=\sup_{\text{balls }B}\left[\fint_B\omega(x)\,dx\right]
\left\{\fint_B\left[\omega(x)\right]^{\frac1{1-p}}\,dx\right\}^{p-1}<\infty
\end{equation*}
and, when $p=1$,
$$
[\omega]_{A_1}:=\sup_{\text{balls }B}\frac1{|B|}\int_B\omega(x)\,dx
\left[\left\|\omega^{-1}\right\|_{L^\infty(B)}\right]<\infty.
$$
Moreover, the \emph{weighted Lebesgue space $L_\omega^p$} is defined
to be the set of all measurable functions $f$ on $\mathbb{R}^n$ such that
$$
\|f\|_{L^p_\omega}:=\left[\int_{\mathbb{R}^n}|f(x)|^p\omega(x)\,dx\right]^\frac1p<\infty.
$$
\end{definition}

\begin{corollary}\label{coro-wLp}
Let $p\in[1,\infty)$ and $\omega\in A_p$.
Then Theorem \ref{thm1} holds with $X$ replaced by $L_\omega^p$.
\end{corollary}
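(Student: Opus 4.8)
The plan is to deduce Corollary~\ref{coro-wLp} directly from Theorem~\ref{thm1} applied with $X=L^p_\omega$; for this I must check the two hypotheses of that theorem, namely that $L^p_\omega$ is a ball Banach function space and that the Hardy--Littlewood maximal operator $M$ is bounded on the associate space $(L^p_\omega)'$.

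First I would verify that $L^p_\omega$ satisfies Definition~\ref{def-bbfs}. Items (i)--(iii) are immediate from the explicit form of $\|\cdot\|_{L^p_\omega}$ together with the monotone convergence theorem (note that an $A_p$ weight is positive almost everywhere, which is what item (i) needs), while the triangle inequality is Minkowski's inequality. For item (iv), the local integrability of $\omega\in A_p$ gives $\int_B\omega\,dx<\infty$, hence $\mathbf{1}_B\in L^p_\omega$; and the estimate $\int_B|f|\,dx\le C_{(B)}\|f\|_{L^p_\omega}$ follows, when $p\in(1,\infty)$, from H\"older's inequality and the finiteness of $\int_B\omega^{1/(1-p)}\,dx$ (a consequence of $[\omega]_{A_p}<\infty$), and, when $p=1$, from $\int_B|f|\,dx\le\|\omega^{-1}\|_{L^\infty(B)}\int_B|f|\omega\,dx$ with $\|\omega^{-1}\|_{L^\infty(B)}<\infty$ built into $[\omega]_{A_1}<\infty$.

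Next I would identify the associate space via Definition~\ref{def-X'}. When $p\in(1,\infty)$, the substitution $f=h\,\omega^{-1/p}$ gives $\|f\|_{L^p_\omega}=\|h\|_{L^p}$, whence $\|g\|_{(L^p_\omega)'}=\|g\,\omega^{-1/p}\|_{L^{p'}}$; since $p'/p=p'-1$, this means $(L^p_\omega)'=L^{p'}_\sigma$ with $\sigma:=\omega^{-1/(p-1)}=\omega^{1-p'}$. By the classical self-duality of Muckenhoupt classes (see, e.g., \cite{m72, gr85}), $\omega\in A_p$ is equivalent to $\sigma\in A_{p'}$, and therefore Muckenhoupt's theorem yields the boundedness of $M$ on $L^{p'}_\sigma=(L^p_\omega)'$; Theorem~\ref{thm1} then applies. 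When $p=1$, the analogous substitution $f=h\,\omega^{-1}$ gives $\|g\|_{(L^1_\omega)'}=\|g/\omega\|_{L^\infty}$, i.e.\ $(L^1_\omega)'$ is the weighted space $L^\infty(\omega^{-1}\,dx)$; here one invokes instead the defining $A_1$ property $M\omega\le[\omega]_{A_1}\,\omega$ almost everywhere, which shows that $|g|\le C\omega$ a.e.\ implies $M(g)\le C[\omega]_{A_1}\omega$ a.e., so that $M$ is bounded on $(L^1_\omega)'$. In either case Theorem~\ref{thm1} yields the conclusion.

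I expect the only point requiring real care to be the endpoint $p=1$: the associate space is then a weighted $L^\infty$ rather than a weighted Lebesgue space, so the boundedness of $M$ on it cannot be quoted from Muckenhoupt's theorem but must be obtained from the pointwise bound $M\omega\lesssim\omega$ that characterizes $A_1$. One should also double-check the exponent bookkeeping $p'/p=p'-1$ and the equivalence $\omega\in A_p\Leftrightarrow\omega^{1-p'}\in A_{p'}$ so that the associate weight is correctly identified.
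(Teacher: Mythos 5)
Your argument is correct and follows essentially the same route as the paper: identify the associate space $(L^p_\omega)'$ (as $L^{p'}_{\omega^{1-p'}}$ for $p>1$, and as the weighted $L^\infty$ space with norm $\|g/\omega\|_{L^\infty}$ for $p=1$), then invoke Muckenhoupt's theorem plus the self-duality $\omega\in A_p\Leftrightarrow\omega^{1-p'}\in A_{p'}$ in the first case and the pointwise $A_1$ bound $M\omega\le[\omega]_{A_1}\omega$ in the second. The only cosmetic difference is that you derive the associate-space identification by the explicit isometric substitution $f=h\omega^{-1/p}$ rather than citing Diening et al., and your $p=1$ endpoint argument via $|g|\le C\omega\Rightarrow Mg\le C[\omega]_{A_1}\omega$ is the same estimate the paper writes out ball by ball.
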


\begin{proof}
It is easy to check that $L_\omega^p$ is a ball Banach function space
having an absolutely continuous norm.
So we only need to verify that $M$ is bounded on $(L_\omega^p)'$.
From \cite[Theorem 2.7.4]{dhhr11},
we deduce that, when $p\in(1,\infty)$ and $\omega\in A_p$,
$$\left(L_\omega^p\right)'=L^{p'}_{\omega^{1-p'}}
\ \text{ and }\ \omega^{1-p'}\in A_{p'}.$$
Recall that, for any $p\in(1,\infty)$,
$M$ is bounded on $L^p_\omega$ if and only if $\omega\in A_p$;
see, for instance, \cite[p.\,137, Theorem 7.3]{Duo}.
Therefore, $M$ is bounded on $(L_\omega^p)'$ for any $p\in(1,\infty)$
and $\omega\in A_p$.

It remains to show the boundedness of $M$ on $(L_\omega^1)'$ for any $w\in A_1$.
By a dual observation, we have
$$(L_\omega^1)'=\left\{f\in\mathscr{M}:\
\|f\|_{(L_\omega^1)'}:=\left\|\frac f\omega\right\|_{L^\infty}<\infty\right\}.$$
Moreover, for any $w\in A_1$ and any ball $B$ centered at $x$,
\begin{align*}
\fint_{B}|f(y)|\,dy
&=\fint_{B}|f(y)|\omega(y)\omega^{-1}(y)\,dy
\le \left\|\frac f\omega\right\|_{L^\infty}\fint_{B}\omega(y)\,dy
\le \left\|\frac f\omega\right\|_{L^\infty}[\omega]_{A_1}\omega(x).
\end{align*}
Taking the supremum over all balls $B$, we then obtain
$$\left\|\frac {M(f)}{\omega}\right\|_{L^\infty}
\le [\omega]_{A_1} \left\|\frac f\omega\right\|_{L^\infty}.$$
This shows that $M$ is bounded on $(L_\omega^1)'$,
which completes the proof of Corollary \ref{coro-wLp}.
\end{proof}

\begin{remark}\label{rem-wLp}
\begin{itemize}
\item[\rm (i)] Corollary \ref{coro-wLp} is new,
even for the unweighted case $\omega=1$.
In this case, for ${\rm VMO}$ and ${\rm CMO}$,
these characterizations  seem to be elementary conclusions
but we did not find explicit statements in the existed literature.

\item[\rm (ii)] One can further show that
$M$ is bounded on $(L_\omega^1)'$ if and only if
$w\in A_1$. Indeed, the ``if'' part is showed in the proof of Corollary \ref{coro-wLp}.
Conversely, the ``only if'' part also holds due to the observation
$$\frac{\fint_{B}\omega(y)\,dy}{\omega(x)}
\le \frac{M(\omega)(x)}{\omega(x)}
\lesssim \frac{\omega(x)}{\omega(x)}=1$$
for any $B$ containing $x$.
\end{itemize}
\end{remark}

{\bf Variable Lebesgue spaces.} \quad
The variable-exponent Lebesgue space emerged from research on
phenomena with spatially varying integrability
(for instance, heterogeneous PDEs and image processing).
Its flexible exponent structure plays a crucial role in
modeling non-uniform behaviors,
and it is now widely applied in harmonic analysis and PDEs.
For more recent progress of variable Lebesgue spaces,
we refer to the monographs of Cruz-Uribe and Fiorenza \cite{cf13}
and Diening et al. \cite{dhhr11}.

\begin{definition}
Let $p(\cdot):\ \mathbb{R}^n\to[0,\infty)$ be a measurable function.
Then the \emph{variable Lebesgue space
$L^{p(\cdot)}$} is defined to be the set of all measurable functions $f$ on $\mathbb{R}^n$ such that
$$
\|f\|_{L^{p(\cdot)}}:=\inf\left\{\lambda\in(0,\infty):\ \int_{\mathbb{R}^n}
\left[\frac{|f(x)|}{\lambda}\right]^{p(x)}\,dx\le1\right\}<\infty.
$$
Moreover, let $\widetilde{p}_-:=\operatorname*{ess\,inf}_{x\in\mathbb{R}^n}\,p(x)$
and $\widetilde p_+:=\operatorname*{ess\,sup}_{x\in\mathbb{R}^n}\,p(x)$.
Furthermore, $p(\cdot)$ is said to be
\emph{globally log-H\"older continuous} if there exist  $p_{\infty}\in\mathbb{R}$
and a positive constant $C$ such that, for any
$x,\ y\in\mathbb{R}^n$,
$$
|p(x)-p(y)|\le C\frac{1}{\log(e+1/|x-y|)}
\ \text{ and }\
|p(x)-p_\infty|\le C\frac{1}{\log(e+|x|)}.
$$
\end{definition}

\begin{corollary}\label{coro-vLp}
Let $p(\cdot):\ \mathbb{R}^n\to(0,\infty)$ be a globally
log-H\"older continuous function satisfying
$1\le\widetilde p_-\leq \widetilde p_+<\infty$.
Then Theorem \ref{thm1} holds with $X$ replaced by $L^{p(\cdot)}$.
\end{corollary}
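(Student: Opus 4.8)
The plan is to verify, exactly as in the proof of Corollary~\ref{coro-wLp}, that $L^{p(\cdot)}$ satisfies the hypotheses of Theorem~\ref{thm1}: that it is a ball Banach function space and that the Hardy--Littlewood maximal operator $M$ is bounded on its associate space $(L^{p(\cdot)})'$.

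First I would dispose of the structural requirements. Since $1\le\widetilde p_-$, the Luxemburg functional $\|\cdot\|_{L^{p(\cdot)}}$ obeys the triangle inequality; since $\widetilde p_+<\infty$, the indicator $\mathbf 1_B$ of any ball lies in $L^{p(\cdot)}$, and H\"older's inequality for $L^{p(\cdot)}$ together with $\mathbf 1_B\in L^{p'(\cdot)}$ yields $\int_B|f(x)|\,dx\le C_{(B)}\|f\|_{L^{p(\cdot)}}$; the remaining lattice and Fatou properties in Definition~\ref{def-bbfs} are immediate from the definition of the modular. Hence $L^{p(\cdot)}$ is a ball Banach function space. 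Moreover, $\widetilde p_+<\infty$ forces $L^{p(\cdot)}$ to have an absolutely continuous norm, so that, by Lemma~\ref{lem-acn}, its associate space coincides with its dual; more precisely, by \cite{cf13} (see also \cite{dhhr11}), $(L^{p(\cdot)})'=L^{p'(\cdot)}$ with equivalent norms, where $p'(\cdot)$ is the pointwise conjugate exponent defined by $\frac1{p(x)}+\frac1{p'(x)}=1$.

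It then remains to prove that $M$ is bounded on $L^{p'(\cdot)}$, and I would do this by checking the two hypotheses of the boundedness criterion for $M$ on variable Lebesgue spaces. First, the reciprocal exponent $\frac1{p'(\cdot)}=1-\frac1{p(\cdot)}$ is globally log-H\"older continuous: this follows from the global log-H\"older continuity of $p(\cdot)$ together with $1\le p(\cdot)\le\widetilde p_+<\infty$, via $|\frac1{p(x)}-\frac1{p(y)}|\le|p(x)-p(y)|$ and the analogous inequality for the decay toward $p_\infty$. Second, $\widetilde{(p')}_-=\frac{\widetilde p_+}{\widetilde p_+-1}>1$ because $\widetilde p_+<\infty$. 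By the classical fact that $M$ is bounded on $L^{q(\cdot)}$ whenever $q(\cdot)$ is globally log-H\"older continuous with $1<\widetilde q_-$, with no upper bound imposed on $\widetilde q_+$ (see \cite{cf13, dhhr11}), it follows that $M$ is bounded on $L^{p'(\cdot)}$, and Theorem~\ref{thm1} then applies with $X=L^{p(\cdot)}$.

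The only subtlety, and the step I would treat most carefully, is the boundary case $\widetilde p_-=1$: there $p'(\cdot)$ equals $\infty$ on the set where $p(\cdot)=1$ (or tends to $\infty$ when this infimum is approached but not attained), so $\widetilde{(p')}_+=\infty$ and $L^{p'(\cdot)}$ is a variable Lebesgue space with unbounded exponent. One must therefore invoke the version of the maximal inequality permitting $\widetilde q_+=\infty$---it is exactly the log-H\"older decay at infinity that makes this version available---rather than the more commonly quoted version requiring $\widetilde q_+<\infty$. This plays the role of the separate treatment of the endpoint $p=1$ in the proof of Corollary~\ref{coro-wLp}; apart from it, the argument uses no ideas beyond those already appearing there.
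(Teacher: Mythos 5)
Your proposal is correct and follows essentially the same route as the paper: identify $(L^{p(\cdot)})'=L^{p'(\cdot)}$ and then invoke the boundedness of $M$ on $L^{p'(\cdot)}$ from the global log-H\"older continuity of $p'(\cdot)$ together with $\widetilde{(p')}_-=(\widetilde p_+)'>1$. The only cosmetic difference is that you reach the identification $(L^{p(\cdot)})'=L^{p'(\cdot)}$ via absolute continuity of the norm and Lemma~\ref{lem-acn}, whereas the paper simply cites \cite[Theorem 3.2.13]{dhhr11} directly; your closing remark about the endpoint $\widetilde p_-=1$ (where $\widetilde{(p')}_+=\infty$) is a fair point of care, but it is already absorbed by the version of the maximal inequality the paper quotes from \cite[Theorem 3.16]{cf13}, which permits an unbounded conjugate exponent precisely because of the log-H\"older decay at infinity.
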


\begin{proof}
It is easy to check that $L^{p(\cdot)}$ is a ball Banach function space
so long as $\widetilde p_-\ge1$.
So it remains to verify that $M$ is bounded on $[L^{p(\cdot)}]'$.
By \cite[Theorem 3.2.13]{dhhr11}, we obtain
$$[L^{p(\cdot)}]'=L^{p'(\cdot)}
\ \text{ with }\ \frac{1}{p(\cdot)}+\frac{1}{p'(\cdot)}=1.$$
Meanwhile, from \cite[Theorem 3.16]{cf13}, it follows that
$M$ is bounded on $L^{p'(\cdot)}$ so long as
$\widetilde p'_->1$,
which holds due to $\widetilde p'_-=(\widetilde p_+)'$ and $\widetilde p_+<\infty$.
This finishes the proof of Corollary \ref{coro-vLp}.
\end{proof}

\begin{remark}
Corollary \ref{coro-vLp} is new.
\end{remark}

{\bf Mixed-norm Lebesgue spaces.} \quad
In 1961, Benedek and Panzone \cite{BP} studied the mixed-norm Lebesgue space $L^{\vec{p}}$,
which can be traced back to H\"ormander \cite{H1}.
Later on, in 1970, Lizorkin \cite{l70} further developed both the theory of
multipliers of Fourier integrals and estimates of convolutions
in the mixed-norm Lebesgue spaces.
In recent years, the real-variable theory of mixed-norm function spaces
has rapidly been developed to meet the requirements arising in the study of
the boundedness of operators, partial differential equations, and some
other analysis subjects.
We refer to the systematic survey \cite{hy} for more recent progress
on the theory of function spaces with mixed-norm.

\begin{definition}
Let $\vec{p}:=(p_1,\ldots,p_n)\in(0,\infty]^n$.
The \emph{mixed-norm Lebesgue space $L^{\vec{p}}$} is defined
to be the set of all measurable functions $f$ on $\mathbb{R}^n$ such that
$$
\|f\|_{L^{\vec{p}}}:=\left\{\int_{\mathbb{R}}\cdots\left[
\int_{\mathbb{R}}|f(x_1,\ldots,x_n)|^{p_1}\,dx_1\right]
^{\frac{p_2}{p_1}}\cdots\,dx_n\right\}^{\frac{1}{p_n}},
$$
with the usual modifications made when $p_i=\infty$ for some $i\in\{1,\ldots,n\}$, is finite.
\end{definition}

\begin{corollary}\label{coro-mLp}
Let $\vec{p}:=(p_1,\ldots,p_n)\in[1,\infty)^n$.
Then $M$ is bounded on $(L^{\vec{p}})'$ if and only if
$\vec{p}$ satisfies any one of the following conditions:
\begin{itemize}
\item[\rm (i)] $p_i>1$ for any $i\in\{1,\dots,n\}$,
\item[\rm (ii)] $p_i=1$ for any $i\in\{1,\dots,n\}$,
\item[\rm (iii)] there exists some $j\in\{1,\dots,n-1\}$ such that
$1=p_1=\cdots=p_j<\min\{p_{j+1},\dots,p_n\}$.
\end{itemize}
As a consequence, if $\vec{p}$ satisfying (i) or (ii) or (iii),
then Theorem \ref{thm1} holds with $X$ replaced by $L^{\vec{p}}$.
\end{corollary}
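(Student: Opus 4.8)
\emph{Proof proposal.}
The plan is to follow the template of the other corollaries in this subsection: verify that $L^{\vec{p}}$ meets the hypotheses of Theorem~\ref{thm1}, i.e., that it is a ball Banach function space on whose associate space $M$ is bounded. That $L^{\vec{p}}$ is a ball Banach function space with an absolutely continuous norm is routine from its definition (Minkowski's inequality yields the triangle inequality). By Lemma~\ref{lem-acn} together with the duality for mixed-norm Lebesgue spaces of Benedek and Panzone \cite{BP} (see also \cite{hy}), its associate space is $(L^{\vec{p}})'=L^{\vec{p}'}$, where $\vec{p}':=(p_1',\dots,p_n')$ with $1/p_i+1/p_i'=1$, so that $p_i'=\infty$ precisely when $p_i=1$. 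Thus the corollary reduces to deciding when $M$ is bounded on $L^{\vec{p}'}$, and its last assertion is then immediate from Theorem~\ref{thm1}. Write $\vec{q}:=\vec{p}'$ throughout.

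For the sufficiency (``if'') part I would use the elementary pointwise bound $Mf\le C_n M_{\mathrm S}f$, where $M_{\mathrm S}$ denotes the strong (rectangle) maximal function, combined with the iterated estimate obtained by performing the one-dimensional averages successively. In case (ii), $\vec{q}=(\infty,\dots,\infty)$, so $L^{\vec{q}}=L^\infty$ and $M$ is trivially bounded on it. In case (i), $\vec{q}\in(1,\infty)^n$, and one invokes the classical boundedness of $M_{\mathrm S}$ on $L^{\vec{q}}$ for $\vec{q}\in(1,\infty)^n$ (a mixed-norm form of the Jessen--Marcinkiewicz--Zygmund inequality; see, e.g., \cite{BP, hy}). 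In case (iii), $q_1=\cdots=q_j=\infty$ and $q_{j+1},\dots,q_n\in(1,\infty)$; since the innermost $j$ norms defining $L^{\vec{q}}$ are $L^\infty$-norms, one has $\|h\|_{L^{\vec{q}}}=\|G_h\|_{L^{(q_{j+1},\dots,q_n)}}$, where $G_h(x_{j+1},\dots,x_n):=\operatorname*{ess\,sup}_{(x_1,\dots,x_j)}|h(x)|$. Carrying out the first $j$ averages inside $M_{\mathrm S}f$ gives $M_{\mathrm S}f(x)\le C_n\widetilde M_{\mathrm S}G_f(x_{j+1},\dots,x_n)$, with $\widetilde M_{\mathrm S}$ the $(n-j)$-dimensional strong maximal function; taking $\operatorname*{ess\,sup}_{(x_1,\dots,x_j)}$ on the left (the right-hand side being independent of those variables) and using the boundedness of $\widetilde M_{\mathrm S}$ on $L^{(q_{j+1},\dots,q_n)}$ (case (i) in dimension $n-j$) yields $\|Mf\|_{L^{\vec{q}}}\lesssim\|f\|_{L^{\vec{q}}}$. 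This settles the ``if'' part, hence, via Theorem~\ref{thm1}, the stated consequence.

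The necessity (``only if'') part is where I expect the main difficulty. The first step is the combinatorial observation that $\vec{p}\in[1,\infty)^n$ satisfies one of (i)--(iii) if and only if $\{i:p_i=1\}$ is an initial segment of $\{1,\dots,n\}$; equivalently, $\vec{p}$ satisfies none of (i)--(iii) iff there are indices $l<m$ with $p_l>1$ and $p_m=1$, i.e., in $\vec{q}=\vec{p}'$ a finite exponent $q_l\in(1,\infty)$ sits at an inner position $l$ while $q_m=\infty$ at a more outer position $m>l$. One then has to show $M$ is unbounded on $L^{\vec{q}}$ in this situation. The natural strategy is to build a counterexample living (up to tensoring with fixed compactly supported bump functions in the variables $x_i$, $i\notin\{l,m\}$, which only multiply norms by harmless finite factors since mixed norms factorize over tensor products) essentially in the $(x_l,x_m)$-plane: a function concentrated in a region $\{0<x_l<w(x_m)\}$ whose density grows as $x_m\to0^+$, with $w$ tuned so that every slice has $L^{q_l}_{x_l}$-norm equal to $1$ (so $\|f\|_{L^{\vec{q}}}$ stays bounded), while the $x_m$-averaging built into $M$ collects, at points with $x_m$ below the support, an amount of mass that — because the $x_m$-exponent is $\infty$ and an inner exponent is finite — cannot be controlled by the Minkowski/Fubini argument used in the sufficiency part, so that $\|Mf\|_{L^{\vec{q}}}/\|f\|_{L^{\vec{q}}}$ can be driven to infinity along a suitable family. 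Making this quantitative — choosing $w$, the density, and the scales so the averages genuinely blow up while $\|f\|_{L^{\vec{q}}}$ does not — is the technical heart of this direction and can be modeled on the known counterexamples for maximal operators on mixed-norm spaces. Combining the two directions completes the characterization.
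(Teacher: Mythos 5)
Your overall reduction is the same as the paper's: verify that $L^{\vec{p}}$ is a ball Banach function space with absolutely continuous norm, use Lemma~\ref{lem-acn} and the Benedek--Panzone duality to identify $(L^{\vec{p}})' = L^{\vec{p}'}$, and then decide on which $\vec{p}'$ the Hardy--Littlewood maximal operator is bounded. Where you diverge is in step three: the paper simply cites \cite[Lemma~3.5]{HLYY} and \cite[Remark~4.4]{hy}, which already contain the full ``if and only if'' characterization of boundedness of $M$ on $L^{\vec{q}}$, whereas you attempt to re-derive it. Your ``if'' sketch is fine in outline --- reducing case~(iii) to case~(i) by integrating out the innermost $L^\infty$ variables to get the $(n-j)$-dimensional strong maximal function acting on $G_f$, and citing a mixed-norm Jessen--Marcinkiewicz--Zygmund estimate for the remaining $(1,\infty)^{n-j}$ block. (One caveat: the ``iterated one-dimensional averages'' heuristic is more delicate than your phrasing suggests, since a supremum inside a mixed norm does not pass through Minkowski; the honest proof of the $(1,\infty)^n$ case rests on a Fefferman--Stein--type vector-valued maximal inequality, not naive iteration. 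Your citation covers this, but the description is imprecise.)

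The genuine gap is the ``only if'' direction. Your combinatorial observation --- that $\vec{p}$ fails all of (i)--(iii) precisely when some $p_l>1$ precedes some $p_m=1$, i.e.\ a finite $q_l\in(1,\infty)$ sits inside an outer $q_m=\infty$ in $\vec{q}=\vec{p}'$ --- is correct and is exactly the right thing to isolate. But you then describe only the shape of a would-be counterexample (a function concentrated near $\{0<x_l<w(x_m)\}$ with density blowing up as $x_m\to 0^+$, tensored with fixed bumps in the other coordinates) and explicitly defer the quantitative part: the choice of $w$, the density, and the scales that make $\|Mf\|_{L^{\vec q}}/\|f\|_{L^{\vec q}}$ blow up while keeping $\|f\|_{L^{\vec q}}$ bounded. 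As written, this is not a proof of unboundedness but a plan for one, and it is precisely the nontrivial content of the characterization. The paper sidesteps this by citing \cite[Lemma~3.5]{HLYY} and \cite[Remark~4.4]{hy}, where this necessity direction is established; your proposal would need either to supply the actual counterexample construction or to invoke the same references for the necessity as it does (implicitly) for the sufficiency. Until then, the biconditional in the corollary is only half-proved, although the final application of Theorem~\ref{thm1} (which needs only the ``if'' direction) is unaffected.
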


\begin{proof}
It is easy to check that $L^{\vec{p}}$ is a ball Banach function space,
whose norm is also an absolutely continuous norm.
By this, the dual theorem of mixed-norm Lebesgue spaces
(see, for instance, \cite[Theorem 1.a]{BP})
and Lemma \ref{lem-acn},
we have
$(L^{\vec{p}})'=L^{\vec{p}'}$ with $\vec{p}':=(p_1',\dots,p_n')$.
From this and the boundedness of $M$ on mixed-norm Lebesgue spaces
(see, for instance, \cite[Lemma 3.5]{HLYY} and \cite[Remark 4.4]{hy}),
it follows that $M$ is bounded on $L^{\vec{p}'}$ if and only if
$\vec{p}$ satisfies (i) or (ii) or (iii).
Using this and Theorem \ref{thm1}, we obtain the desired conclusions,
which completes the proof of Corollary \ref{coro-mLp}.
\end{proof}

\begin{remark}
Corollary \ref{coro-mLp} is new.
Moreover, it should be pointed out that, for the mixed-norm Lebesgue,
we obtain the sufficient and \emph{necessary} condition of $\vec{p}$
such that $M$ is bounded on $X'$ with $X=L^{\vec{p}}$.
While, for other specific function spaces,
except the weighted Lebesgue spaces [see Remark \ref{rem-wLp}(ii)],
we usually only have some sufficient conditions.

\end{remark}

{\bf Morrey spaces.} \quad
Morrey \cite{m38} introduced what is now called Morrey spaces in 1938
and used it to study the local behavior of solutions to
second order elliptic partial differential equations.
In recent decades, there exists an increasing interest in
applications of Morrey spaces to various areas of analysis
such as partial differential equations, potential theory, and harmonic analysis.
For more recent progress of Morrey spaces,
we refer to the monographs of Adams \cite{a15},
Sawano et al. \cite{sfk20i,sfk20ii}, and Yuan et al. \cite{ysy10}.
\begin{definition}\label{def-morrey}
Let $0<q\le p<\infty$.
The \emph{Morrey space $\mathcal{M}_p^q$} is defined
to be the set of all measurable functions $f$ on $\mathbb{R}^n$ such that
\begin{equation*}
\| f \|_{{\mathcal M}^{p}_q}
:=
\sup_{x \in {\mathbb R}^n,\, R\in(0,\infty)}
|B(x,R)|^{\frac{1}{p}-\frac{1}{q}}
\left[
\int_{B(x,R)}|f(y)|^{q}\,dy
\right]^{\frac{1}{q}}<\infty.
\end{equation*}
\end{definition}

\begin{corollary}\label{coro-morrey}
Let $1<q \le p<\infty$.
Then Theorem \ref{thm1} holds with $X$ replaced by ${\mathcal M}^{p}_q$.
\end{corollary}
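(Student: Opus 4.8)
The plan is to follow the same two-step scheme used in Corollaries \ref{coro-wLp}, \ref{coro-vLp}, and \ref{coro-mLp}: first confirm that $\mathcal{M}^p_q$ is a ball Banach function space, and then verify that the Hardy--Littlewood maximal operator $M$ is bounded on the associate space $(\mathcal{M}^p_q)'$; once both are in hand, Corollary \ref{coro-morrey} follows at once from Theorem \ref{thm1} with $X=\mathcal{M}^p_q$. The first step is routine: conditions (i)--(iv) of Definition \ref{def-bbfs}, the triangle inequality, and the local integrability bound $\int_B|f(x)|\,dx\le C_{(B)}\|f\|_{\mathcal{M}^p_q}$ all follow directly from Definition \ref{def-morrey}, together with the H\"older inequality on balls and the Fatou property of $L^q$; here $1\le q\le p<\infty$ already suffices.

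The substantive step is the boundedness of $M$ on $(\mathcal{M}^p_q)'$, and this is where care is needed. In contrast to the weighted, variable, and mixed-norm cases, one cannot identify the associate space with the Banach dual via Lemma \ref{lem-acn}, since $\mathcal{M}^p_q$ does \emph{not} have an absolutely continuous norm. Instead I would recall the block-space representation of the K\"othe dual: for $1<q\le p<\infty$, the space $(\mathcal{M}^p_q)'$ coincides, with equivalent norms, with the block space generated by $L^{q'}$-blocks adapted to balls (with the appropriate Morrey-dual normalization); see \cite{sfk20i, sfk20ii} (Morrey spaces and their associate spaces are also among the guiding examples in \cite{is17}). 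Granting this identification, the boundedness of $M$ on $(\mathcal{M}^p_q)'$ in the range $1<q\le p<\infty$ is a known fact, which I would invoke from \cite{sfk20i, sfk20ii} rather than reprove here.

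The main obstacle is precisely this boundedness of $M$ on the associate space of a Morrey space: in contrast to the preceding corollaries, here the associate space is genuinely of block type rather than another Morrey-type space, so the elementary computations do not transfer and one must work through the block decomposition. The hypothesis $q>1$ is used exactly at this step and is sharp for the boundedness of $M$ on $(\mathcal{M}^p_q)'$ (see \cite{sfk20i, sfk20ii}; compare also Remark \ref{rem-morrey}). Once this boundedness is in place, the remaining argument is formal and the proof is completed by a direct appeal to Theorem \ref{thm1}.
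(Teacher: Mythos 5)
Your proposal is correct and follows essentially the same approach as the paper: check that $\mathcal{M}^p_q$ is a ball Banach function space, identify $(\mathcal{M}^p_q)'$ with a block space, and invoke the known boundedness of $M$ on that block space (the paper cites \cite{ch14}, \cite{h15}, and \cite{st15} for this, whereas you cite \cite{sfk20i, sfk20ii}). Your extra observation that Lemma \ref{lem-acn} cannot be used here because $\mathcal{M}^p_q$ lacks an absolutely continuous norm is a correct and worthwhile remark, though the paper does not make it explicit.
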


\begin{proof}
It is easy to verify that ${\mathcal M}^{p}_q$ is a ball Banach function space.
Meanwhile, recall that the associate space of ${\mathcal M}^{p}_q$
is the block space on which $M$ is bounded; see, for instance,
\cite[Theorem 3.1]{ch14}, \cite[Lemma 5.7]{h15}, and \cite[Theorem 4.1]{st15}.
This finishes the proof of Corollary \ref{coro-morrey}.
\end{proof}

\begin{remark}\label{rem-morrey}
\begin{itemize}
\item[\rm (i)]
Corollary \ref{coro-morrey} is new.

\item[\rm (ii)]
The well-known result of Campanato \cite{c64} shows that, when $\alpha\in[-\frac1q,0)$,
the Morrey space $\mathcal{M}^{-1/\alpha}_q$ coincides with
Campanato space $\mathcal{L}_{\alpha,\,L^q}$ in the sense of modulo polynomials.
Meanwhile, Almeida and Samko \cite{as18,as17} introduced (generalized) vanishing Morrey spaces,
which can be regarded as vanishing Campanato spaces for negative $\alpha$,
and then obtained the boundedness of some classical operators
on these vanishing subspaces in \cite{aas20i,aas20ii,a20}.
So one may ask whether it is possible to prove Theorem \ref{thm1} for negative $\alpha$?
However, unlike BMO, Morrey spaces do not have the self-improving property,
that is, $\mathcal{M}^{-1/\alpha}_q$ is not invariant for different $q$
and hence \eqref{LX=L} fails for negative $\alpha$.
\end{itemize}
\end{remark}

{\bf Grand Besov--Bourgain--Morrey spaces.} \quad
In recent years, Morrey-type spaces have also proved useful in harmonic analysis
and PDEs; see, for example, \cite{dn20,hs21,h-23,h24-i,h24-ii,ly13,mst18,s03}.
In particular, Bourgain \cite{b91} introduced a novel function space,
nowadays called the Bourgain--Morrey space, and used it to refine
the classical Stein--Tomas (Strichartz) estimate.
Later on, Masaki \cite{m16-09234} further investigated this space in the full range of indices.
Recently, there exist numerous studies of Bourgain--Morrey spaces on PDEs, such as
the nonlinear Schr\"{o}dinger equation, the Korteweg-de Vries (KdV) equation,
and the Airy equation;
see, for example, \cite{BV07,mvv99,ms18,ms18-2,m16-09234}.

To study the integrability of the Jacobian determinant,
Iwaniec and Sbordone \cite{is92} introduced Grand Lebesgue spaces.
Later, Greco et al. \cite{gis97} further introduced generalized grand Lebesgue spaces
to extend the $p$-harmonic operator to slightly larger spaces than classical Lebesgue spaces.
From then on, grand Lebesgue spaces have been widely studied and used in harmonic analysis
and partial differential equations.
In a very recent article, via combining the structure of
generalized grand Lebesgue spaces in \cite{gis97} and
Besov--Bourgain--Morrey spaces in \cite{zstyy22},
Wan et al. \cite{wyz25} introduced grand Besov--Bourgain--Morrey spaces
and study the nontriviality, the embedding,
and the boundedness of some operators.
We recall the definition of these function spaces as follows.

\begin{definition}\label{def-gBBM}
\begin{itemize}
\item[\rm (i)]
Let $p\in (1,\infty)$, $\theta\in[0,\infty)$, and
$\Omega$ be a measurable subset of ${\mathbb{R}^n}$ with
$|\Omega|\in(0,\infty)$.
The \emph{generalized grand Lebesgue space
$L^{p),\theta}(\Omega)$} is defined
to be the set of all locally integrable functions $f$ on
$\Omega$ such that
\begin{equation*}
\left\|f\right\|_{L^{p),\theta}(\Omega)}:=\sup_{
\varepsilon\in (0,p-1)}
\varepsilon^{\frac{\theta}{p-\varepsilon}}
\left[\frac1{|\Omega|}\int_{\Omega}
\left|f(x)\right|^{p-\varepsilon}\,dx
\right]^{\frac1{p-\varepsilon}}<\infty.
\end{equation*}
Moreover, let $L^{\infty),\theta}(\Omega):=L^\infty(\Omega)$.

\item[\rm (ii)]
Let $0<q\le p\le \infty$ and $r\in(0,\infty]$.
The \emph{Bourgain--Morrey space}
$\mathcal{M}^p_{q,r}$ is defined to be the
set of all $f\in L^q_{{\mathop\mathrm{\,loc\,}}}$ such that
\begin{equation*}
\left\|f\right\|_{\mathcal{M}^p_{q,r}}
:=\left[\sum_{\nu\in{\mathbb Z},m\in{\mathbb Z}^n}
\left\{\left|Q_{\nu,m}\right|^{\frac{1}{p}-
\frac{1}{q}}\left[\int_{Q_{\nu,m}}\left|f(x)\right|^q\,dx\right]
^{\frac1q}
\right\}^r\right]^{\frac{1}{r}},
\end{equation*}
with the usual modifications made when $q=\infty$ and/or $r=\infty$,
is finite.

\item[\rm (iii)]
Let $1<q \le p\le r\le\infty$,
$\tau\in(0,\infty]$, and $\theta\in[0,\infty)$.
The \emph{grand Besov--Bourgain--Morrey space}
$\mathcal{M}\dot{B}_{q),r,\theta}^{p,\tau}$ is defined to be the set
of all $f\in \widetilde{L}^q_{{\mathop\mathrm{\,loc\,}}}
:=\bigcap_{\varepsilon\in (0,q-1)}
L^{q-\varepsilon}_{{\mathop\mathrm{\,loc\,}}}$ such that
\begin{align*}
\|f\|_{\mathcal{M}\dot{B}_{q),r,\theta}^{p,\tau}}
:=\left[\sum_{\nu\in\mathbb{Z}}
\left\{\sum_{m\in\mathbb{Z}^n}
\left[\left|Q_{\nu,m}\right|^\frac{1}{p}
\left\|
f\right\|_{L^{q),\theta}(Q_{v,m})}
\right]^r \right\}^\frac{\tau}{r}\right]^\frac{1}{\tau},
\end{align*}
with the usual modifications made when  $r=\infty$ and/or
$\tau=\infty$, is finite.
\end{itemize}
\end{definition}

Grand Besov--Bourgain--Morrey spaces when $\theta=0$ and $\tau=r$ coincide with Bourgain--Morrey spaces.
Bourgain--Morrey spaces when $r=\infty$ coincide with Morrey spaces.
To be precise, we have the following equivalences:
\begin{itemize}
\item $\mathcal{M}\dot{B}_{q),r,0}^{p,\tau}=:\mathcal{M}\dot{B}_{q,r}^{p,\tau}$,
which is the Besov--Bourgain--Morrey space introduced by Zhao et al. \cite{zstyy22};

\item $\mathcal{M}\dot{B}_{q),r,0}^{p,r}=\mathcal{M}\dot{B}_{q,r}^{p,r}={\mathcal M}^p_{q,r}$;

\item $\mathcal{M}\dot{B}_{q),\infty,0}^{p,\infty}=\mathcal{M}\dot{B}_{q,\infty}^{p,\infty}={\mathcal M}^p_{q,\infty}=\mathcal{M}_q^p$,
which is the Morrey space in Definition \ref{def-morrey}.
\end{itemize}

\begin{corollary}\label{coro-gbbm}
Let $p,q,r,\tau$, and $\theta$ satisfy
either of the following nontrivial conditions:
\begin{enumerate}
\item[$\mathrm{(i)}$] $1<q\leq  p<r=\tau =\infty$ and $\theta\in[0,\infty)$.
\item[$\mathrm{(ii)}$] $1<q< p\le r<\tau= \infty$
and $\theta\in[0,\infty)$.
\item[$\mathrm{(iii)}$] $1<q< p<r\le \infty$, $\tau\in[1,\infty)$,
and $\theta\in[0,\infty)$.
\item[$\mathrm{(iv)}$] $1<q= p<r< \infty$, $\tau\in[1,\infty)$,
and $\theta\in(\frac{q}{r},\infty)$.
\item[$\mathrm{(v)}$] $1<q= p<r= \infty$, $\tau\in[1,\infty)$,
and $\theta\in(\frac{q}{\tau},\infty)$.
\item[$\mathrm{(vi)}$] $1<q= p\le r<\tau= \infty$
and $\theta\in (\frac{q}{r},\infty)$.
\end{enumerate}
Then Theorem \ref{thm1} holds with $X$ replaced by $\mathcal{M}\dot{B}_{q),r,\theta}^{p,\tau}$.
\end{corollary}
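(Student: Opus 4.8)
The plan is to follow the three-step template already used for Corollaries \ref{coro-wLp} through \ref{coro-morrey}: first confirm that $\mathcal{M}\dot{B}_{q),r,\theta}^{p,\tau}$ is a ball Banach function space, then verify that the Hardy--Littlewood maximal operator $M$ is bounded on its associate space $(\mathcal{M}\dot{B}_{q),r,\theta}^{p,\tau})'$, and finally invoke Theorem \ref{thm1} directly.

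For the first step, one checks the axioms in Definition \ref{def-bbfs}. Under each of the nontriviality conditions (i)--(vi) --- in particular $q>1$, and $\theta$ above the indicated critical threshold in the diagonal case $p=q$ --- the functional $\|\cdot\|_{\mathcal{M}\dot{B}_{q),r,\theta}^{p,\tau}}$ is a norm; the lattice property (ii) and the Fatou property (iii) propagate through the iterated structure (the generalized grand Lebesgue quasi-norm $\|\cdot\|_{L^{q),\theta}(Q_{\nu,m})}$, the $\ell^r$-sum over $m\in\mathbb{Z}^n$, and the $\ell^\tau$-sum over $\nu\in\mathbb{Z}$) by the monotone convergence theorem; $\mathbf{1}_B\in\mathcal{M}\dot{B}_{q),r,\theta}^{p,\tau}$ for every ball $B$; and the local integrability bound $\int_B|f|\le C_{(B)}\|f\|_{\mathcal{M}\dot{B}_{q),r,\theta}^{p,\tau}}$ holds. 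All of these structural facts are contained in the work of Wan et al.~\cite{wyz25}, so I would cite them rather than reprove them here.

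The crucial second step is the duality theory of these spaces. Here I would appeal to the description of $(\mathcal{M}\dot{B}_{q),r,\theta}^{p,\tau})'$ obtained in \cite{wyz25}: in each of the six regimes the associate space is a concrete block-type space built from the conjugate exponent $q'$, and the boundedness of $M$ on that block space is exactly what is established there. The case list (i)--(vi) is precisely the family of parameter configurations for which this duality identity and the accompanying maximal inequality are simultaneously available --- the generic Morrey-type regime $q<p$, the diagonal regime $q=p$ where nontriviality and a usable predual force $\theta>q/r$ (or $\theta>q/\tau$ when $r=\infty$), and the degenerate endpoints $r=\infty$ or $\tau=\infty$ in which the space reduces to a Besov--Bourgain--Morrey or Morrey space whose associate space (again a block-type space) is already known to carry the boundedness of $M$. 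Once $M$ is known to be bounded on $(\mathcal{M}\dot{B}_{q),r,\theta}^{p,\tau})'$, Theorem \ref{thm1} applies verbatim and yields $\mathrm{Y}\mathcal{L}_{\alpha}=\mathrm{Y}\mathcal{L}_{\alpha,\mathcal{M}\dot{B}_{q),r,\theta}^{p,\tau}}$ for every $\mathrm{Y}\in\{\mathrm{V},\mathrm{X},\mathrm{C}\}$ and every $\alpha\in[0,\infty)$.

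The main obstacle is the second step: there is no soft reason for $M$ to be bounded on the associate space of a grand Morrey-type space, and this boundedness genuinely fails outside the regimes (i)--(vi). Hence the proof rests on importing the sharp duality and maximal-function results of \cite{wyz25} and matching their hypotheses with the six listed conditions; with that matching in hand, the conclusion follows by a direct application of Theorem \ref{thm1}.
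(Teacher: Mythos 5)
Your proposal follows exactly the template the paper uses: confirm $\mathcal{M}\dot{B}_{q),r,\theta}^{p,\tau}$ is a ball Banach function space, cite Wan, Yang, and Zhao for the boundedness of $M$ on the associate space in the six listed parameter regimes, and apply Theorem \ref{thm1}. This matches the paper's own (one-line) proof, which simply invokes \cite[Theorem 4.12]{wyz25} for the maximal-function boundedness.
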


\begin{proof}
The boundedness of $M$ on $(\mathcal{M}\dot{B}_{q),r,\theta}^{p,\tau})'$
was established in \cite[Theorem 4.12]{wyz25}.
\end{proof}

\begin{remark}
Corollary \ref{coro-gbbm} is new.
\end{remark}

{\bf Lorentz spaces.} \quad
Recall that, for any $p,q\in(0,\infty]$,
the \emph{Lorentz space $L^{p,q} $}, originally studied by Lorentz \cite{Lor50,Lor51},
is defined to be the set of all
$f\in\mathscr{M} $ such that
\begin{equation*}
\|f\|_{L^{p,q} }
:=\left\{\int_0^{\infty}
\left[t^{\frac{1}{p}}\sup_{\{E\subset\mathbb{R}^n:\,|E|\ge t\}}\frac{1}{|E|}\int_E |f(x)|\,dx\right]^q
\frac{\,dt}{t}\right\}^{\frac{1}{q}},
\end{equation*}
with the usual modification made when $q=\infty$, is finite.

\begin{corollary}\label{coro-lor}
Let $p,q\in(1,\infty)$.
Then Theorem \ref{thm1} holds with $X$ replaced by $L^{p,q}$.
\end{corollary}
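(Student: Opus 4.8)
The plan is to apply Theorem \ref{thm1} with $X=L^{p,q}$, so the two things to verify are that $L^{p,q}$ is a ball Banach function space and that the Hardy--Littlewood maximal operator $M$ is bounded on its associate space $(L^{p,q})'$.

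First I would check that $L^{p,q}$ is a ball Banach function space. Since the functional $\|\cdot\|_{L^{p,q}}$ is built from the maximal rearrangement $f^{\ast\ast}(t):=\sup_{\{E:\,|E|\ge t\}}\frac1{|E|}\int_E|f(x)|\,dx$ and the map $f\mapsto f^{\ast\ast}$ satisfies $(f+g)^{\ast\ast}\le f^{\ast\ast}+g^{\ast\ast}$ pointwise, and because $p\in(1,\infty)$, the space $L^{p,q}$ equipped with this norm is a rearrangement-invariant Banach function space in the sense of Bennett and Sharpley \cite{bs}. Properties (i)--(iii) of Definition \ref{def-bbfs} are immediate from the monotonicity of the rearrangement; for (iv), any ball $B$ satisfies $\mathbf{1}_B^{\ast\ast}(t)=\min\{1,|B|/t\}$, so $\|\mathbf{1}_B\|_{L^{p,q}}<\infty$; and the required local bound $\int_B|f(x)|\,dx\le C_{(B)}\|f\|_{L^{p,q}}$ follows from the H\"older inequality for Lorentz spaces together with $\|\mathbf{1}_B\|_{L^{p',q'}}<\infty$, where $\frac1p+\frac1{p'}=1=\frac1q+\frac1{q'}$.

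Next I would identify the associate space. Because $q\in(1,\infty)$ is finite, $L^{p,q}$ has an absolutely continuous norm, so Lemma \ref{lem-acn} gives $(L^{p,q})'=(L^{p,q})^\ast$; combining this with the classical duality for Lorentz spaces (valid since $p,q\in(1,\infty)$) yields $(L^{p,q})'=L^{p',q'}$ with equivalent norms. (Alternatively, one may read $(L^{p,q})'=L^{p',q'}$ off directly from the computation of associate spaces of Lorentz spaces in \cite{bs}.) In either case $p',q'\in(1,\infty)$.

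Finally I would invoke the classical boundedness of $M$ on Lorentz spaces: $M$ is bounded on $L^{s,t}$ whenever $s\in(1,\infty)$ and $t\in(0,\infty]$, obtained by applying the Marcinkiewicz interpolation theorem to the weak $(1,1)$ and the $L^\infty$ bounds for $M$. Taking $s=p'$ and $t=q'$ shows $M$ is bounded on $(L^{p,q})'=L^{p',q'}$, so Theorem \ref{thm1} applies and gives the corollary. The only mildly delicate point is the first step---confirming that we are genuinely in the Banach (not merely quasi-Banach) function space setting, which is exactly where $p>1$ is used, and correctly pinning down $(L^{p,q})'$; after that, everything is a direct citation of standard Lorentz-space theory together with Theorem \ref{thm1}.
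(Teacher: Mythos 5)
Your proof is correct and follows the same strategy as the paper: verify that $L^{p,q}$ is a ball Banach function space and that $M$ is bounded on its associate space, then invoke Theorem \ref{thm1}. The only difference is one of presentation---the paper simply cites the boundedness of $M$ on $(L^{p,q})'$ from the proof of \cite[Theorem 5.18]{zyy23}, whereas you reconstruct it explicitly by identifying $(L^{p,q})'=L^{p',q'}$ (via Lemma \ref{lem-acn} and classical Lorentz duality, valid since $p,q\in(1,\infty)$) and then applying Marcinkiewicz interpolation, which is a sound and slightly more self-contained route to the same fact.
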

\begin{proof}
It is easy to verify that $L^{p,q}$ is a ball Banach function space.
Meanwhile, the boundedness of $M$ on $(L^{p,q})'$ was established
in the proof of \cite[Theorem 5.18]{zyy23},
which completes the proof of Corollary \ref{coro-lor}.
\end{proof}

\begin{remark}
Corollary \ref{coro-lor} is new.
\end{remark}

{\bf Herz spaces.} \quad
In 1968, Herz \cite{herz} introduced the classical Herz spaces
and used it to study the Bernstein theorem on absolutely
convergent Fourier transforms.
Recently, Rafeiro and Samko \cite{RS20}
introduced the local and the global generalized Herz spaces,
which respectively generalize the classical
Herz spaces and generalized Morrey type spaces.
For more studies on the (generalized) Herz spaces,
we refer to the monographs \cite{lyh08,lyh22}.

\begin{definition}
Let $\alpha\in\mathbb{R}$ and $p,q\in(0,\infty]$.
\begin{itemize}
\item[\rm (i)] The \emph{homogeneous Herz space $\dot{K}^{\alpha,p}_q$} is defined to be
the set of all locally integrable functions $f$ on $\mathbb{R}^n\setminus\{\mathbf{0}\}$
such that
$$\|f\|_{\dot{K}^{\alpha,p}_q}:=\left[
\sum_{k\in\mathbb{Z}} 2^{k \alpha p}
\left\|f\mathbf{1}_{B(\mathbf{0},2^k)\setminus B(\mathbf{0},2^{k-1})}
\right\|_{L^q}^p\right]^{\frac1p},$$
with the usual modification made when $p=\infty$, is finite.

\item[\rm (ii)] The \emph{non-homogeneous Herz space ${K}^{\alpha,p}_q$} is defined to be
the set of all locally integrable functions $f$ on $\mathbb{R}^n$
such that
$$\|f\|_{{K}^{\alpha,p}_q}:=\left[
\left\|f\mathbf{1}_{B(\mathbf{0},1)}\right\|_{L^q}^p
+ \sum_{k=1}^\infty 2^{k \alpha p}
\left\|f\mathbf{1}_{B(\mathbf{0},2^k)\setminus B(\mathbf{0},2^{k-1})}
\right\|_{L^q}^p\right]^{\frac1p},$$
with the usual modification made when $p=\infty$, is finite.
\end{itemize}
\end{definition}

\begin{corollary}\label{coro-herz}
Let $p, q\in[1,\infty)$ and $\alpha\in(-n(1-\frac1q),\frac nq)$.
Then Theorem \ref{thm1} holds with $X$ replaced by
$\dot{K}^{\alpha,p}_q$ or ${K}^{\alpha,p}_q$.
\end{corollary}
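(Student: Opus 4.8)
The plan is to follow the template used for Corollaries \ref{coro-wLp} through \ref{coro-lor}: verify that, under the stated restrictions on $p$, $q$, and $\alpha$, both the homogeneous Herz space $\dot{K}^{\alpha,p}_q$ and the non-homogeneous Herz space $K^{\alpha,p}_q$ are ball Banach function spaces on whose associate spaces the Hardy--Littlewood maximal operator $M$ is bounded, and then invoke Theorem \ref{thm1} directly. I would treat the two Herz spaces in parallel, since the arguments differ only in the handling of the dyadic annuli $B(\mathbf 0,2^k)\setminus B(\mathbf 0,2^{k-1})$ that accumulate at the origin.

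First I would check the axioms of Definition \ref{def-bbfs}. Properties (i)--(iii) and the triangle inequality are immediate from $1\le p,q<\infty$ and the definition of the Herz norm. For axiom (iv): in the non-homogeneous case $\mathbf 1_B$ meets only finitely many dyadic annuli, so $\mathbf 1_B\in K^{\alpha,p}_q$ for every ball $B$ and every real $\alpha$; in the homogeneous case one must sum a geometric-type series over the annuli shrinking to the origin, and the bound $\alpha<\tfrac nq$ is exactly what makes that series converge, so $\mathbf 1_B\in\dot{K}^{\alpha,p}_q$. The remaining requirement $\int_B|f|\,dx\le C_{(B)}\|f\|_X$ I would derive from the H\"older inequality of Lemma \ref{holder}, once $\mathbf 1_B$ is known to belong to the associate space; this is where the lower bound $\alpha>-n(1-\tfrac1q)$ is used, and it is verified together with the description of the associate space in the next step.

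Next I would identify the associate space. Since $1\le p,q<\infty$, the Herz norm is absolutely continuous, so by Lemma \ref{lem-acn} the associate space coincides with the dual space; combining this with the known duality of Herz spaces recorded in \cite{lyh08, lyh22} yields associate spaces that are again Herz spaces with conjugate parameters. The hypothesis $\alpha\in(-n(1-\tfrac1q),\tfrac nq)$ places these parameters in the range for which the boundedness of $M$ on (homogeneous and non-homogeneous) Herz spaces is available in \cite{lyh08, lyh22}; in the borderline case $q=1$ the conjugate exponent is infinite and this citation does not apply directly, so I would instead check the boundedness of $M$ on the associate space by a direct computation, in the spirit of the treatment of $(L_\omega^1)'$ in the proof of Corollary \ref{coro-wLp}. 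With both steps in hand, Theorem \ref{thm1} gives the conclusion.

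The main obstacle is the parameter bookkeeping for $\alpha$: one has to check simultaneously that $\mathbf 1_B$ lies in $\dot{K}^{\alpha,p}_q$ for every ball $B$, that the local integrability condition holds (equivalently, $\mathbf 1_B$ lies in the associate space), and that $M$ is bounded on the associate Herz space, and then confirm that the single interval prescribed for $\alpha$ is contained in the intersection of all three admissible ranges; the behaviour near the origin in the homogeneous case, and the endpoint $q=1$, are where this bookkeeping is most delicate.
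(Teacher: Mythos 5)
Your proposal follows exactly the paper's route: verify that $\dot{K}^{\alpha,p}_q$ and $K^{\alpha,p}_q$ are ball Banach function spaces, identify the associate spaces as Herz spaces with conjugate indices via the duality results of \cite{lyh08,lyh22}, invoke the $M$-boundedness on those associate Herz spaces from the same references, and apply Theorem \ref{thm1}. Your extra caution about the endpoint $q=1$ (so $q'=\infty$) is a reasonable precaution that the paper does not make explicit — it simply cites the references uniformly over the whole parameter range — and your invocation of Lemma \ref{lem-acn} to pass from dual space to associate space spells out a step the paper leaves implicit in the phrase ``dual property of Herz spaces''; neither of these changes the substance of the argument.
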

\begin{proof}
It is easy to verify that both $\dot{K}^{\alpha,p}_q$ and ${K}^{\alpha,p}_q$
are ball Banach function spaces.
By the dual property of Herz spaces
(see, for instance, \cite[Theorem 1.7.9]{lyh22} and also
\cite[pp.\,8-9, Corollraies 1.2.1 and 1.2.2]{lyh08}),
we have
$(\dot{K}^{\alpha,p}_q)'=\dot{K}^{-\alpha,p'}_{q'}$
and
$({K}^{\alpha,p}_q)'={K}^{-\alpha,p'}_{q'}.$
Combining this and the boundedness of $M$ on Herz spaces
(see, for instance, \cite[p.\, 131, Theorem 5.1.1]{lyh08}
and also \cite[p.\,81, Corollary 1.5.4]{lyh22})
then finishes the proof of Corollary \ref{coro-herz}.
\end{proof}

\begin{remark}
\begin{itemize}
\item[\rm (i)]
Corollary \ref{coro-herz} is new.

\item[\rm (ii)]
One can similarly show that Theorem \ref{thm1} holds with $X$ replaced by
the local generalized Herz space of Rafeiro and Samko \cite{RS20};
see, for instance, the proof of \cite[Theorem 4.15]{zyy24}.
However, for the global generalized Herz space, it is unclear so far
because we do not know its associate space.
\end{itemize}
\end{remark}

\begin{appendices}

\section{Convolutions and Fractional Integral Commutators}\label{a}

Here as an appendix, by presenting two examples, we show that
vanishing Campanato spaces arise naturally as the ranges of
integral-type operators. Section \ref{s5.2} considers the convolution,
while Section \ref{s5.1} deals with commutators.

\subsection{Convolutions\label{s5.2}}

Let $\alpha\in(0,\infty)$ and $\varphi \in L^1$. Consider the mapping
$f \in \mathcal{L}_\alpha \mapsto \varphi * f \in \mathcal{L}_\alpha.$
It is straightforward to verify that, for any $f \in \mathcal{L}_\alpha$
and $\varphi \in L^1$,
\begin{align}\label{a*f}
\|\varphi * f\|_{\mathcal{L}_\alpha}
\le \|\varphi\|_{L^1} \|f\|_{\mathcal{L}_\alpha}.
\end{align}
About the range of this convolution mapping, we have the following property:
\begin{proposition}\label{pa1}
Let $\alpha\in(0,\infty)$ and let
$f \in \mathcal{L}_\alpha$ and $\varphi \in L^1$.
Then
$\varphi * f \in \mathrm{V}\mathcal{L}_\alpha$.
\end{proposition}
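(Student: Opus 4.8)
The plan is to exploit the smoothing effect of convolution after a density reduction. First, note that $\mathrm{V}\mathcal{L}_\alpha$ is closed in $\mathcal{L}_\alpha$: since $P_Q^{\lfloor\alpha\rfloor}$ is linear, for any cube $Q$ and any $g,h$ one has $\mathcal{O}_\alpha(g;Q)\le\mathcal{O}_\alpha(g-h;Q)+\mathcal{O}_\alpha(h;Q)\le\|g-h\|_{\mathcal{L}_\alpha}+\mathcal{O}_\alpha(h;Q)$, so an $\mathcal{L}_\alpha$-limit of functions in $\mathrm{V}\mathcal{L}_\alpha$ again belongs to $\mathrm{V}\mathcal{L}_\alpha$. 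Combining this with \eqref{a*f} (which makes $\varphi\mapsto\varphi*f$ continuous from $L^1$ into $\mathcal{L}_\alpha$) and the density of $C_{\mathrm c}^\infty$ in $L^1$, it suffices to prove the assertion when $\varphi\in C_{\mathrm c}^\infty$; fix such a $\varphi$ with $\mathrm{supp}\,\varphi\subset B(\mathbf 0,N)$ and put $g:=\varphi*f$, which is then smooth.

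The heart of the argument will be the uniform bound $\|D^\gamma g\|_{L^\infty}\lesssim\|f\|_{\mathcal{L}_\alpha}$ (with implicit constant depending only on $n$, $\alpha$, and $\varphi$) for every multi-index $\gamma$ with $|\gamma|=\lfloor\alpha\rfloor+1$. The key point is that, by integration by parts, $\int_{\mathbb{R}^n}y^\beta D^\gamma\varphi(y)\,dy=0$ whenever $|\beta|\le\lfloor\alpha\rfloor$, so that $\int_{\mathbb{R}^n}D^\gamma\varphi(y)\,P(x-y)\,dy=0$ for every polynomial $P$ of degree at most $\lfloor\alpha\rfloor$ and every $x\in\mathbb{R}^n$. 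Using $D^\gamma g=(D^\gamma\varphi)*f$ and taking $P:=P_{B(x,N)}^{\lfloor\alpha\rfloor}(f)$, one then obtains
\[
|D^\gamma g(x)|\le\|D^\gamma\varphi\|_{L^\infty}\int_{B(x,N)}\bigl|f(z)-P_{B(x,N)}^{\lfloor\alpha\rfloor}(f)(z)\bigr|\,dz\lesssim N^{n+\alpha}\|f\|_{\mathcal{L}_\alpha},
\]
uniformly in $x$, the last step being the Campanato estimate on the ball $B(x,N)$ (the ball and cube versions of $\mathcal{O}_\alpha$ being mutually comparable, as already used in Section~\ref{s3}).

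Granted this, the conclusion is routine. For a cube $Q$ centered at $x_0$ with edge length $\ell(Q)$, let $T$ be the degree-$\lfloor\alpha\rfloor$ Taylor polynomial of $g$ at $x_0$; Taylor's theorem and the bound above give $|g(x)-T(x)|\lesssim\ell(Q)^{\lfloor\alpha\rfloor+1}\|f\|_{\mathcal{L}_\alpha}$ for $x\in Q$, and, since $P_Q^{\lfloor\alpha\rfloor}(T)=T$, applying \eqref{Cs} to $g-T$ yields $\fint_Q|g-P_Q^{\lfloor\alpha\rfloor}(g)|\lesssim\fint_Q|g-T|\lesssim\ell(Q)^{\lfloor\alpha\rfloor+1}\|f\|_{\mathcal{L}_\alpha}$. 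Hence $\mathcal{O}_\alpha(g;Q)\lesssim\ell(Q)^{\lfloor\alpha\rfloor+1-\alpha}\|f\|_{\mathcal{L}_\alpha}$, and since $\lfloor\alpha\rfloor+1-\alpha>0$ this shows $\lim_{a\to0^+}\sup_{\ell(Q)\le a}\mathcal{O}_\alpha(g;Q)=0$, i.e. $g=\varphi*f\in\mathrm{V}\mathcal{L}_\alpha$; together with \eqref{a*f} and the closedness reduction, this proves the proposition. The main obstacle is the uniform estimate in the second paragraph: a priori the top-order derivatives of $\varphi*f$ need not be bounded, because functions in $\mathcal{L}_\alpha$ may grow polynomially, and if $|D^{\lfloor\alpha\rfloor+1}(\varphi*f)|$ grew at infinity the oscillations on far-away small cubes would fail to vanish uniformly; the vanishing moments of $D^\gamma\varphi$ up to order $\lfloor\alpha\rfloor$ are precisely what let us replace $f$ by the genuinely $\mathcal{L}_\alpha$-controlled quantity $f-P_{B(x,N)}^{\lfloor\alpha\rfloor}(f)$ on $B(x,N)$, uniformly in $x$.
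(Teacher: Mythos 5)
Your proposal is correct and follows essentially the same route as the paper: reduce to $\varphi\in C_{\mathrm c}^\infty$ via \eqref{a*f} and density, then exploit the vanishing moments of $D^\gamma\varphi$ with $|\gamma|=\lfloor\alpha\rfloor+1$ to bound $D^\gamma(\varphi*f)$ uniformly by $\|f\|_{\mathcal L_\alpha}$, and deduce the vanishing of $\mathcal O_\alpha$ on small cubes by Taylor expansion. Your write-up is somewhat more explicit than the paper's in two places that the paper leaves to the reader — the closedness of $\mathrm V\mathcal L_\alpha$ in $\mathcal L_\alpha$ justifying the density reduction, and the final step where the paper simply asserts $\varphi*f\in(\mathcal L_N\cap\mathcal L_\alpha)\subset\mathrm V\mathcal L_\alpha$ while you spell out the $\ell(Q)^{\lfloor\alpha\rfloor+1-\alpha}$ bound — but the underlying argument is the same.
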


\begin{proof}
Since $C_{\rm c}^\infty$ is dense in $L^1$,
we may assume without loss of generality that $\varphi \in C_{\rm c}^\infty$
due to \eqref{a*f}.
Then ${\rm supp\,}(\varphi) \subset B(\mathbf{0},R)$ for some $R\in(0,\infty)$.
Let $N:=\lfloor \alpha \rfloor+1$.
Write $\widetilde{f}_x(y):=f(x - y)$ for any $x,y\in\mathbb{R}^n$
and note that
\begin{align*}
|\nabla^N(\varphi * f)(x)|
&= |(\nabla^N \varphi) * f(x)|
= \left|\int_{\mathbb{R}^n} \nabla^N a(y)
\left[\widetilde{f}_x(y) - P_{B(\mathbf{0},R)}^{\lfloor \alpha \rfloor}(\widetilde{f}_x)(y)\right] \, dy\right|\\
&\le \int_{B(\mathbf{0},R)} \left\|\nabla^N a\right\|_{L^\infty}
\left[\widetilde{f}_x(y) - P_{B(\mathbf{0},R)}^{\lfloor \alpha \rfloor}(\widetilde{f}_x)(y)\right]\, dy\\
&\lesssim \left\|\nabla^N a\right\|_{L^\infty}|B(\mathbf{0},R)|^{1+\frac{\alpha}{n}}
\|\widetilde{f}_x\|_{\mathcal{L}_\alpha}
\sim \|f\|_{\mathcal{L}_\alpha},
\end{align*}
which implies that
$\nabla^N (\varphi * f) \in (L^\infty)^{Nn}.$
Therefore,
$\varphi * f \in \left(\mathcal{L}_N \cap \mathcal{L}_\alpha\right)
\subset \mathrm{V}\mathcal{L}_\alpha$,
which completes the proof of Proposition \ref{pa1}.
\end{proof}

\subsection{Commutators}
\label{s5.1}

Let $1 \le q \le p < \infty$ and ${\mathcal M}^{p}_q$ be
the Morrey space as in Definition \ref{def-morrey}.
Very recently, the compactness of commutators in Morrey space
was investigated by Hakim el al. \cite{hst25}.
We consider the fractional case in this subsection.

Let $\alpha\in(0,n)$.
The \emph{fractional integral operator} \( I_\alpha \) is defined by setting,
for any suitable function \( f \) and any $x\in\mathbb{R}^n$
\[
I_\alpha f(x) :=
\int_{\mathbb{R}^n}
\frac{f(y)}{|x-y|^{n-\alpha}}\, dy.
\]
Observe that $I_\alpha$ is well-defined on
${\mathcal M}^{p}_q$ with $p\in(1,\infty)$.
Indeed, let $f\in {\mathcal M}^{p}_q$
and $B$ be a ball centered at $x$.
For the local part $I_\alpha(f\chi_B)(x)$,
it is well-defined due to the locally integrability of $|\cdot|^{-(n-\alpha)}$.
For the non-local part  $I_\alpha(f\chi_{B^\complement})(x)$,
it suffices to use H\"older's inequality and then dominate it
by the maximal function which is bounded on Morrey spaces due to
\cite[Theorem 1]{cf87}.

Moreover, let $b \in {\rm Lip}_\beta$ with
$0<\beta<\alpha+\beta<n.$
The \emph{commutator} \( [b, I_\alpha] \) is defined by setting,
for any suitable function $f$ and any $x\in\mathbb{R}^n$,
\[
[b, I_\alpha]f(x) :=
b(x)I_\alpha(f)(x)-I_\alpha(bf)(x) =
\int_{\mathbb{R}^n}
\frac{b(x) - b(y)}{|x-y|^{n-\alpha}}
f(y) \, dy.
\]
It is also well-defined on Morrey spaces because
\[\left|\frac{b(x) - b(y)}{|x-y|^{n-\alpha}}\right|
\lesssim \frac{1}{|x-y|^{n-(\alpha+\beta)}}
\text{ for any }x, y\in\mathbb{R}^n \text{ with }x\neq y\]
and hence we can repeat the argument as above.

\begin{lemma}\label{prop:250505-1}
Suppose that $\alpha\in(0,n)$, $\beta\in(0,1)$, $1 < q \leq p < \infty$,
and $b \in {\rm Lip}_\beta$.
\begin{enumerate}
    \item[\rm(i)] If
    $
    \frac{n}{p} = \alpha + \beta,
    $
    then $[b,  I_\alpha ]$ maps ${\mathcal M}^p_q$ to ${\rm BMO}$. The operator norm of $[b,  I_\alpha ]$ satisfies
    \[
    \left\|[b,  I_\alpha ]\right\|_{{\mathcal M}^p_q
     \to {\rm BMO}} \lesssim
     \|b\|_{{\rm Lip}_\beta}.
    \]
    \item[\rm(ii)] If
    $
    \gamma = \alpha + \beta - \frac{n}{p} \in (0,1),
    $
    then $[b,  I_\alpha ]$ maps ${\mathcal M}^p_q$ to ${\rm Lip}_\gamma$. The operator norm of $[b,  I_\alpha ]$ satisfies
    \[
    \left\|[b,  I_\alpha ]\right\|_{{\mathcal M}^p_q
     \to {\rm Lip}_\gamma}
     \lesssim \|b\|_{{\rm Lip}_\beta}.
    \]

    \item[\rm(iii)] If $\alpha$, $\beta$, $s$ and $t$ satisfy
    $\alpha + \beta<\frac{n}{p},$ $\frac{1}{s} = \frac{1}{p} - \frac{\alpha}{n},$
    $1<s\le t<\infty$, and $\frac{t}{s} = \frac{q}{p},$
    then $[b,  I_\alpha ]$ maps ${\mathcal M}^p_q$ to ${\mathcal M}^s_t$. The operator norm of $[b,  I_\alpha ]$ satisfies
    $\|[b,  I_\alpha ]\|_{{\mathcal M}^p_q
     \to{\mathcal M}^s_t}
     \lesssim \|b\|_{{\rm Lip}_\beta}.$
\end{enumerate}
\end{lemma}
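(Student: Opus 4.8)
The plan is to derive all three parts from a single pointwise kernel estimate together with classical mapping properties of the fractional integral on Morrey spaces. Since $b\in{\rm Lip}_\beta$ gives $|b(x)-b(y)|\le\|b\|_{{\rm Lip}_\beta}|x-y|^\beta$ for all $x,y\in\mathbb{R}^n$, the kernel of $[b,I_\alpha]$ satisfies
\[
\left|\frac{b(x)-b(y)}{|x-y|^{n-\alpha}}\right|\le\|b\|_{{\rm Lip}_\beta}\,|x-y|^{-(n-\alpha-\beta)},
\]
whence $|[b,I_\alpha]f(x)|\le\|b\|_{{\rm Lip}_\beta}\,I_{\alpha+\beta}(|f|)(x)$ whenever the right-hand side is finite. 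In part (iii), where $\alpha+\beta<\frac np$, this integral converges absolutely; in parts (i) and (ii), where $\alpha+\beta\ge\frac np$, it does not, and $[b,I_\alpha]f$ is only meaningful modulo constants, so there one works throughout with the \emph{differences} $[b,I_\alpha]f(x)-\langle[b,I_\alpha]f\rangle_B$ and $[b,I_\alpha]f(x)-[b,I_\alpha]f(x')$, which are exactly the quantities the two conclusions concern. The only bookkeeping tool needed in every case is the elementary Morrey bound $\int_{B(x_0,\rho)}|f|\lesssim\rho^{n(1-\frac1p)}\|f\|_{{\mathcal M}^p_q}$ (H\"older's inequality and Definition~\ref{def-morrey}) together with its dyadic-annulus version.

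For part (iii) the pointwise bound reduces everything to Adams' theorem on the boundedness of $I_{\alpha+\beta}$ on Morrey spaces, which maps ${\mathcal M}^p_q$ into the Morrey space with the indices prescribed in (iii) and with norm a dimensional constant; multiplying by $\|b\|_{{\rm Lip}_\beta}$ gives the stated estimate. Alternatively, one combines the Hedberg-type inequality $I_{\alpha+\beta}(|f|)\lesssim[M(f)]^{1-\frac{(\alpha+\beta)p}{n}}\,\|f\|_{{\mathcal M}^p_q}^{\frac{(\alpha+\beta)p}{n}}$ with the boundedness of $M$ on ${\mathcal M}^p_q$ already invoked in the excerpt.

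Parts (i) and (ii) share one scheme, with a scale $\rho$ equal to $\ell(B)$ in (i) and to $|x-x'|$ in (ii): decompose $f=f\mathbf{1}_{2B}+f\mathbf{1}_{(2B)^c}$ (respectively $f=f\mathbf{1}_{B(x,2\rho)}+f\mathbf{1}_{B(x,2\rho)^c}$). For the local piece one has $\fint_B|[b,I_\alpha](f\mathbf{1}_{2B})|\lesssim\|b\|_{{\rm Lip}_\beta}\fint_B I_{\alpha+\beta}(|f|\mathbf{1}_{2B})$; interchanging the order of integration and using $\int_B|x-y|^{-(n-\alpha-\beta)}\,dx\lesssim\rho^{\alpha+\beta}$ yields $\lesssim\rho^{\alpha+\beta-n}\int_{2B}|f|\lesssim\rho^{\alpha+\beta-\frac np}\|f\|_{{\mathcal M}^p_q}$, which equals $\|f\|_{{\mathcal M}^p_q}$ in (i) by the critical identity $\alpha+\beta=\frac np$, and equals $\rho^{\gamma}\|f\|_{{\mathcal M}^p_q}$ in (ii) after the analogous dyadic sum over shrinking annuli about $x$ and about $x'$ (convergent since $\gamma>0$). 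For the non-local piece one uses the difference of the two base points and splits the kernel difference
\[
\frac{b(x)-b(y)}{|x-y|^{n-\alpha}}-\frac{b(x')-b(y)}{|x'-y|^{n-\alpha}}
=\big[b(x)-b(x')\big]\frac{1}{|x-y|^{n-\alpha}}
+\big[b(x')-b(y)\big]\left[\frac{1}{|x-y|^{n-\alpha}}-\frac{1}{|x'-y|^{n-\alpha}}\right]
\]
(with $x_B$ in place of $x'$ in (i)); using $|b(x)-b(x')|\lesssim\rho^\beta$, $|b(x')-b(y)|\lesssim|x'-y|^\beta$, and $\big||x-y|^{-(n-\alpha)}-|x'-y|^{-(n-\alpha)}\big|\lesssim\rho\,|x'-y|^{-(n-\alpha+1)}$, and summing over the dyadic annuli $|y-x|\sim2^k\rho$ with the Morrey bound, the two resulting series are geometric: with ratios $2^{-\beta}$ and $2^{-1}$ in (i), both made summable precisely by $\alpha+\beta=\frac np$, and with ratios $2^{\gamma-\beta}$ and $2^{\gamma-1}$ in (ii), summable because $\gamma<1$ and $\gamma<\beta$ (the latter since $\alpha<\frac np$, implicit in the hypothesis that $I_\alpha$ acts on ${\mathcal M}^p_q$). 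Their totals are $\lesssim\|f\|_{{\mathcal M}^p_q}$ in (i) and $\lesssim\rho^{\gamma}\|f\|_{{\mathcal M}^p_q}$ in (ii). Combining local and non-local estimates yields $\sup_B\fint_B|[b,I_\alpha]f-\langle[b,I_\alpha]f\rangle_B|\lesssim\|b\|_{{\rm Lip}_\beta}\|f\|_{{\mathcal M}^p_q}$ for (i) and $|[b,I_\alpha]f(x)-[b,I_\alpha]f(x')|\lesssim\|b\|_{{\rm Lip}_\beta}|x-x'|^{\gamma}\|f\|_{{\mathcal M}^p_q}$ for (ii).

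\textbf{The main obstacle} is not any single estimate but the bookkeeping forced by the non-absolute convergence in (i) and (ii): one must never write $I_{\alpha+\beta}(|f|)$ globally, must pair the far-field contributions at the two base points before summing so that the leading $|y|^{-(n-\alpha)}$ growth collapses into a $\rho$-gain, and must track which occurrences of $b$ may be frozen to a constant — costing a harmless $\rho^\beta$ — and which must be kept so that the H\"older modulus of $b$ can be played against the regularity of the Riesz kernel. The two endpoint identities, $\alpha+\beta=\frac np$ in (i) and $\gamma=\alpha+\beta-\frac np$ in (ii), are exactly what turn each otherwise borderline series into a convergent geometric one.
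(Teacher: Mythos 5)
Your argument for part (i) is essentially the same as the paper's: the paper also writes $[b,I_\alpha]f(x)-[b,I_\alpha]f(z)$ for two points of the ball, splits the integrand as a kernel difference times $b(x)-b(y)$ plus $[b(x)-b(z)]/|z-y|^{n-\alpha}$ (an equivalent algebraic rearrangement of your two-term split), treats the far field $y\notin 3Q$ by the $r(Q)/|x-y|^{n-\alpha+1}$ mean-value bound and a radial integral in place of your dyadic sum, and handles the local piece $y\in 3Q$ by interchanging the order of integration with $\int_Q|x-y|^{\alpha+\beta-n}\,dx\lesssim r(Q)^{\alpha+\beta}$. The paper proves only (i) explicitly and states that (ii) and (iii) are ``similar,'' so your elaborations there go beyond what is written. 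Two observations worth recording. First, your remark that the far-field geometric series in (ii) needs $\gamma<\beta$, i.e.\ $\alpha<n/p$, is a genuine point: without it the term $[b(x)-b(x')]/|x-y|^{n-\alpha}$ integrated against a Morrey function over dyadic annuli diverges, and this condition is not among the stated hypotheses, so you are right to flag it as implicit. Second, your Adams-theorem route for (iii) does not literally give the indices as printed: dominating $[b,I_\alpha]f$ by $\|b\|_{\mathrm{Lip}_\beta}\,I_{\alpha+\beta}(|f|)$ and applying Adams yields $\frac{1}{s}=\frac{1}{p}-\frac{\alpha+\beta}{n}$, not $\frac{1}{s}=\frac{1}{p}-\frac{\alpha}{n}$ as the lemma states, and since Morrey spaces with different primary indices are not nested, these are genuinely different targets; the $\alpha+\beta$ indices are the standard Chanillo--Paluszy\'nski result and are almost certainly what was intended (the condition $1<s\le t<\infty$ together with $t/s=q/p\le 1$ is also a typo for $1<t\le s<\infty$, as the application in the subsequent proposition confirms), so you should state the discrepancy rather than assert that Adams yields ``the indices prescribed in (iii).''
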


\begin{proof}
We only prove the first assertion,
as the second and the third can be proven similarly.
Let $x, z \in \mathbb{R}^n$. We decompose:
\begin{align*}
&[b,  I_\alpha ]f(x) - [b,  I_\alpha ]f(z) \\
&\quad= \int_{\mathbb{R}^n}
\left[
\frac{b(x) - b(y)}{|x-y|^{n-\alpha}} - \frac{b(z) - b(y)}{|z-y|^{n-\alpha}}
\right]
f(y) \, dy \\
&\quad= \int_{\mathbb{R}^n}
\left[
\frac{b(x) - b(y)}{|x-y|^{n-\alpha}} - \frac{b(x) - b(y)}{|z-y|^{n-\alpha}}
\right]
f(y) \, dy
+ \int_{\mathbb{R}^n}
\frac{b(x) - b(z)}{|z-y|^{n-\alpha}}
f(y) \, dy.
\end{align*}

Let $Q:=B(c(Q),r(Q))$ be a ball. Suppose $x, z \in Q$ and $y \in \mathbb{R}^n \setminus 3Q$. Then
\begin{align*}
\left|
\frac{b(x) - b(y)}{|x-y|^{n-\alpha}} - \frac{b(x) - b(y)}{|z-y|^{n-\alpha}}
\right|
\lesssim \frac{r(Q)}{|x-y|^{n-\alpha+1-\beta}}.
\end{align*}
Therefore
\begin{align*}
&|[b,  I_\alpha ][\chi_{\mathbb{R}^n \setminus 3Q}f](x) - [b,  I_\alpha ][\chi_{\mathbb{R}^n \setminus 3Q}f](z)| \\
&\quad\lesssim \int_{\mathbb{R}^n \setminus 3Q}
\left[
\frac{r(Q)}{|x-y|^{n-\alpha+1-\beta}}
+ \frac{[r(Q)]^\beta}{|x-y|^{n-\alpha}}
\right]
|f(y)| \, dy \\
&\quad\lesssim \int_{\mathbb{R}^n \setminus 3Q} \frac{[r(Q)]^\beta}{|x-y|^{n-\alpha}} |f(y)| \, dy \\
&\quad\lesssim [r(Q)]^\beta \int_{3r(Q)}^\infty
\left[
\frac{1}{\ell^{n-\alpha+1}} \int_{B(c(Q), \ell)} |f(y)| \, dy
\right]
d\ell \\
&\quad\lesssim [r(Q)]^\beta \int_{3r(Q)}^\infty
\frac{1}{\ell^{\frac{n}{p}-\alpha+1}}
d\ell\, \|f\|_{{\mathcal M}^p_q}
\lesssim \|f\|_{{\mathcal M}^p_q}.
\end{align*}

Meanwhile,
\begin{align*}
&\fint_{Q \times Q}
|[b,  I_\alpha ][\chi_{3Q}f](x) - [b,  I_\alpha ][\chi_{3Q}f](z)| \, dx dz\\
&\quad\lesssim \fint_{Q \times Q}\int_{3Q}
\left[\frac{|b(x) - b(y)|}{|x-y|^{n-\alpha}}
+\frac{|b(z) - b(y)|}{|z-y|^{n-\alpha}}\right]|f(y)|\, dydxdz \\
&\quad\lesssim \|b\|_{{\rm Lip}_\beta}\fint_{Q}\int_{3Q}
|x-y|^{\alpha+\beta-n}|f(y)|\, dydx \\
&\quad\lesssim \|b\|_{{\rm Lip}_\beta}\fint_{3Q}
\left[\int_{B(y,4r(Q))}
|x-y|^{\alpha+\beta-n}\,dx\right]|f(y)|\, dy \\
&\quad\sim\|b\|_{{\rm Lip}_\beta}
[r(Q)]^{\alpha+\beta} \fint_{3Q} |f(x)| \, dx
\lesssim\|b\|_{{\rm Lip}_\beta}
\|f\|_{{\mathcal M}^p_q}.
\end{align*}

Thus, $[b,  I_\alpha ]$ is bounded from ${\mathcal M}^p_q$ to ${\rm BMO}$,
which completes the proof of Lemma \ref{prop:250505-1}.
\end{proof}

We present a useful criterion for the compactness
of a set
$B \subset {\rm Lip}_\alpha$.\begin{lemma}\label{lem:250426-1}
Let \( 1 < q \leq p < \infty \) and \( 0 \leq \alpha < \beta < 1 \).
Suppose that a set \( B \subset \mathrm{Lip}_\alpha \) satisfies the following conditions{\rm:}
\begin{enumerate}
\item[\rm (i)] \( B \) is bounded in \( \mathcal{M}^p_q \),
\item[\rm (ii)] \( B \) is bounded in \( \mathrm{Lip}_\beta \),
\item[\rm (iii)] for any test function \( \varphi \in C^\infty_{\rm c} \),
\begin{equation}\label{eq:250426-1}
\lim_{R \to \infty}
\sup_{f \in B}
\sup_{x \in \mathbb{R}^n \setminus B(0, R)}
|\varphi * f(x)| = 0.
\end{equation}
\end{enumerate}
Then \( B \) is relatively compact in \( \mathrm{Lip}_\alpha \).
\end{lemma}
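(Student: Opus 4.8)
The plan is to show that $B$ is totally bounded in $\mathrm{Lip}_\alpha$; since $\mathrm{Lip}_\alpha$ is complete, this is equivalent to relative compactness. The strategy is the classical mollification one: fixing a standard mollifier $\varphi$ and $t\in(0,1)$, write $f=(f-f*\varphi_t)+f*\varphi_t$, show the remainder $f-f*\varphi_t$ is small in $\mathrm{Lip}_\alpha$ \emph{uniformly over $f\in B$} as $t\to0^+$, and show that the smoothed family $\{f*\varphi_t:f\in B\}$ is relatively compact in $\mathrm{Lip}_\alpha$ for each fixed $t$. As a preliminary step I would note that (i) and (ii) already force $B$ to be bounded in $L^\infty$, hence in $\mathrm{Lip}_\alpha$: for $f\in B$ and $x\in\mathbb{R}^n$ write $f(x)=[f(x)-\langle f\rangle_{B(x,1)}]+\langle f\rangle_{B(x,1)}$, bound the bracket by $\fint_{B(x,1)}|f(x)-f(z)|\,dz\lesssim\|f\|_{\mathrm{Lip}_\beta}$ via (ii) and $|\langle f\rangle_{B(x,1)}|$ by the H\"older inequality and (i), and split the cases $|x-y|\le1$ and $|x-y|>1$ to get $\|f\|_{\mathrm{Lip}_\alpha}\lesssim\|f\|_{\mathcal{M}^p_q}+\|f\|_{\mathrm{Lip}_\beta}$. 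Fix $M\in(0,\infty)$ bounding $\sup_{f\in B}(\|f\|_{\mathcal{M}^p_q}+\|f\|_{\mathrm{Lip}_\beta}+\|f\|_{L^\infty}+\|f\|_{\mathrm{Lip}_\alpha})$, fix $\varphi\in C^\infty_{\rm c}$ with $\varphi\ge0$, $\int_{\mathbb{R}^n}\varphi=1$, $\operatorname{supp}\varphi\subset B(\mathbf{0},1)$, and set $\varphi_t:=t^{-n}\varphi(\cdot/t)$.

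\emph{Step 1 (uniform smoothing estimate).} I would prove $\|f-f*\varphi_t\|_{\mathrm{Lip}_\alpha}\le CM\,t^{\beta-\alpha}$ for all $f\in B$ and $t\in(0,1)$, with $C$ independent of $f$ and $t$. From $\int\varphi_t=1$, $\operatorname{supp}\varphi_t\subset B(\mathbf{0},t)$, and (ii), the identity $f-f*\varphi_t=\int\varphi_t(y)[f(\cdot)-f(\cdot-y)]\,dy$ gives $\|f-f*\varphi_t\|_{L^\infty}\le\|f\|_{\mathrm{Lip}_\beta}t^\beta\le Mt^\beta$, while $\|\varphi_t\|_{L^1}=1$ gives $[f-f*\varphi_t]_{\mathrm{Lip}_\beta}\le2\|f\|_{\mathrm{Lip}_\beta}\le2M$; estimating the $\mathrm{Lip}_\alpha$ quotient of $g:=f-f*\varphi_t$ by $[g]_{\mathrm{Lip}_\beta}|x-y|^{\beta-\alpha}$ when $|x-y|\le t$ and by $2\|g\|_{L^\infty}|x-y|^{-\alpha}$ when $|x-y|>t$ then yields the claim. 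Here the gap $\beta>\alpha$ is exactly what produces the decay $t^{\beta-\alpha}\to0$.

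\emph{Step 2 (compactness of the smoothed family at a fixed scale).} For fixed $t\in(0,1)$ let $\mathcal{F}_t:=\{f*\varphi_t:f\in B\}$. Each $f*\varphi_t$ is smooth with $\|f*\varphi_t\|_{L^\infty}\le M$; applying (iii) with $\varphi_t\in C^\infty_{\rm c}$ shows $f*\varphi_t\in C_0(\mathbb{R}^n)$ and, moreover, that $\mathcal{F}_t$ vanishes at infinity uniformly in $f\in B$; and $\nabla(f*\varphi_t)=f*\nabla\varphi_t$ gives $\|\nabla(f*\varphi_t)\|_{L^\infty}\le\|f\|_{L^\infty}\|\nabla\varphi_t\|_{L^1}\le C_tM$ with $C_t:=t^{-1}\|\nabla\varphi\|_{L^1}$, so $\mathcal{F}_t$ is equicontinuous. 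By the Arzel\`a--Ascoli theorem in $C_0(\mathbb{R}^n)$, $\mathcal{F}_t$ is relatively compact in the uniform norm. To upgrade this: any sequence in $\mathcal{F}_t$ has a subsequence $g_j\to g$ uniformly, every $g_j$ and $g$ are Lipschitz with constant $\le C_tM$, and with $\delta_j:=\|g_j-g\|_{L^\infty}\to0$ the same interpolation as in Step 1 (now with exponent $1$ in place of $\beta$ and threshold $|x-y|=\delta_j$) gives $[g_j-g]_{\mathrm{Lip}_\alpha}\lesssim(C_tM+1)\delta_j^{1-\alpha}\to0$, using $\alpha<1$; hence $g_j\to g$ in $\mathrm{Lip}_\alpha$, so $\mathcal{F}_t$ is relatively sequentially compact, hence totally bounded, in $\mathrm{Lip}_\alpha$.

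\emph{Step 3 (conclusion) and the main obstacle.} Given $\varepsilon\in(0,\infty)$, use Step 1 to pick $t$ with $\sup_{f\in B}\|f-f*\varphi_t\|_{\mathrm{Lip}_\alpha}<\varepsilon/2$ and Step 2 to take a finite $(\varepsilon/2)$-net $\{g_1,\dots,g_N\}$ of $\mathcal{F}_t$ in $\mathrm{Lip}_\alpha$; for $f\in B$, choosing $i$ with $\|f*\varphi_t-g_i\|_{\mathrm{Lip}_\alpha}<\varepsilon/2$ gives $\|f-g_i\|_{\mathrm{Lip}_\alpha}<\varepsilon$, so $B$ is totally bounded, hence relatively compact, in $\mathrm{Lip}_\alpha$. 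I expect the main difficulty to be Step 1: mollification does not converge in $\mathrm{Lip}_\alpha$ for a general $\mathrm{Lip}_\alpha$ function, and it is the uniform bound in the strictly finer space $\mathrm{Lip}_\beta$ (hypothesis (ii)), fed into an interpolation between $\mathrm{Lip}_\beta$ and $L^\infty$, that rescues convergence uniformly over $B$; the identical interpolation, now between $\mathrm{Lip}_1$ and $L^\infty$, is what turns the Arzel\`a--Ascoli compactness of $\mathcal{F}_t$ into $\mathrm{Lip}_\alpha$ compactness, and hypothesis (iii) is indispensable precisely because Arzel\`a--Ascoli on the noncompact space $\mathbb{R}^n$ requires uniform smallness at infinity. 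A minor point is the degenerate case $\alpha=0$, where $\|\cdot\|_{\mathrm{Lip}_0}$ is read as the oscillation $\sup_{x\ne y}|f(x)-f(y)|\le2\|\cdot\|_{L^\infty}$ and the interpolation steps trivialize.
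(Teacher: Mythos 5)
Your proof is correct, and it takes a genuinely different route from the paper's. The paper works in Littlewood--Paley / Besov language: it fixes $\varphi = \Delta k$ for a bump $k$, uses the identifications $\mathrm{Lip}_\gamma \sim \dot{B}^\gamma_{\infty,\infty}$ and the embedding $\mathcal{M}^p_q \hookrightarrow \dot{B}^{-n/p}_{\infty,\infty}$, and applies (iii) together with an implicit Arzel\`a--Ascoli / diagonal argument to extract a subsequence whose dyadic Littlewood--Paley pieces $2^{ln}\varphi(2^l\cdot) * f_{j_k}$ converge uniformly at every scale; it then bounds $\|g_k - g_{k'}\|_{\mathrm{Lip}_\alpha}$ by splitting the frequency sum into high scales $j \ge j_0$ (killed by the $\mathrm{Lip}_\beta$ bound, contributing $2^{j_0(\alpha-\beta)}$), low scales $j \le j_1$ (killed by the Morrey bound via the Besov embedding, contributing $2^{j_1(\alpha + n/p)}$), and the finitely many middle scales (killed by the extracted uniform convergence). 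You replace the dyadic decomposition by a single mollification $f = (f - f*\varphi_t) + f*\varphi_t$, first deriving a uniform $L^\infty$ bound from (i)+(ii), then obtaining the uniform smoothing estimate $\|f - f*\varphi_t\|_{\mathrm{Lip}_\alpha} \lesssim t^{\beta-\alpha}$ by interpolating $\mathrm{Lip}_\beta$ against $L^\infty$, and compacting the mollified family $\{f*\varphi_t\}$ in $C_0(\mathbb{R}^n)$ by an explicit Arzel\`a--Ascoli argument before upgrading $C_0$-convergence to $\mathrm{Lip}_\alpha$-convergence by a second interpolation. Both proofs use the three hypotheses in the same structural roles: (ii) controls small scales, (i) controls large scales (for you, via the $L^\infty$ bound), and (iii) controls escape to infinity. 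Your version avoids Besov machinery, makes the Arzel\`a--Ascoli step explicit (the paper only gestures at it), and actually yields the slightly stronger conclusion of compactness for the full norm $\|\cdot\|_{L^\infty} + [\cdot]_{\mathrm{Lip}_\alpha}$; the paper's version is shorter once one accepts the Besov-space calculus and slots naturally into the Littlewood--Paley framework used elsewhere in the paper.
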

\begin{proof}
By normalization, we may assume
\[
\sup_{f \in B}\left(\|f\|_{{\rm Lip}_\beta}
+\|f\|_{{\mathcal M}^p_q}\right)<\infty.
\]
Let $k \in C^\infty_{\rm c}$ satisfy
$\chi_{B(0,1)} \le k \le \chi_{B(0,2)}$  on $\mathbb{R}^n.$
Suppose that we have a sequence
$\{f_j\}_{j=1}^\infty$ in $B$.
Since
$({\mathcal M}^p_q \cap {\rm Lip}_{\beta})
\subset
{\rm Lip}_{\alpha},
$
we find that
$\{f_j\}_{j=1}^\infty$ is a bounded sequence in
${\rm Lip}_{\alpha}$.
Let $\varphi=\Delta k.$
Use the diagonal argument and (\ref{eq:250426-1})
to take a subsequence
$\{f_{j_k}\}_{k=1}^\infty$ such that
$\{2^{l n}\varphi(2^l\cdot)*f_{j_k}\}_{k=1}^\infty$
is uniformly convergent to a function
for each $l \in {\mathbb N}$.
Let $j_1 \le j_0$.
By \cite[Chapter 2 and Section 5.2]{t83}, we have
${\rm Lip}_\beta \sim \dot{B}^\beta_{\infty,\infty},$
${\rm Lip}_\alpha
\sim \dot{B}^\alpha_{\infty,\infty},$ and
${\mathcal M}^p_q \hookrightarrow
\dot{B}^{-\frac{n}{p}}_{\infty,\infty},$
and hence
\begin{align*}
\|g_k-g_{k'}\|_{{\rm Lip}_\alpha}
&\sim
\sup_{j \ge j_0}
2^{j \alpha}\|2^{j n}\varphi(2^j\cdot)*(g_k-g_{k'})\|_{L^\infty}+
\sup_{j_1<j<j_0}
2^{j \alpha}\|2^{j n}\varphi(2^j\cdot)*(g_k-g_{k'})\|_{L^\infty}\\
&\quad+
\sup_{j \le j_1}
2^{j \alpha}\|2^{j n}\varphi(2^j\cdot)*(g_k-g_{k'})\|_{L^\infty}\\
&\le
2^{j_0(\alpha-\beta)}
\sup_{j \ge j_0}
2^{j \beta}\|2^{j n}\varphi(2^j\cdot)*(g_k-g_{k'})\|_{L^\infty}\\
&\quad+ \sup_{j_1<j<j_0}
2^{j \alpha}\|2^{j n}\varphi(2^j\cdot)*(g_k-g_{k'})\|_{L^\infty}\\
&\quad+
2^{j_1(\alpha+\frac{n}{p})}
\sup_{j \le j_1}
2^{-\frac{j n}{p}}\|2^{j n}\varphi(2^j\cdot)*(g_k-g_{k'})\|_{L^\infty}\\
&\le
2^{j_0(\alpha-\beta)}+
\sup_{j_1<j<j_0}
2^{j \alpha}\|2^{j n}\varphi(2^j\cdot)*(g_k-g_{k'})\|_{L^\infty}
+2^{j_1(\alpha+\frac{n}{p})}.
\end{align*}
Letting $k,k' \to \infty$, we obtain
\[
\limsup_{k,k' \to \infty}\|g_k-g_{k'}\|_{{\rm Lip}_\alpha}
\lesssim
2^{j_0(\alpha-\beta)}
+
2^{j_1(\alpha+\frac{n}{p})}.
\]
Letting $j_0 \to \infty$ and $j_1 \to -\infty$, we conclude
\[
\limsup_{k,k' \to \infty}\|g_k-g_{k'}\|_{{\rm Lip}_\alpha}=0.
\]
Thus, $B$ is compact.
This finishes the proof of Lemma \ref{lem:250426-1}.
\end{proof}

Using Lemmas \ref{prop:250505-1} and \ref{lem:250426-1}, we obtain the following compactness on Morrey spaces.
Recall that ${\rm CMO}={\rm C}\mathcal{L}_0$,
${\rm CMO}_\gamma={\rm C}\mathcal{L}_\gamma$ with $\gamma\in(0,1)$,
${\rm VMO}={\rm V}\mathcal{L}_0$,
and ${\rm XMO}={\rm X}\mathcal{L}_0$.

\begin{proposition}\label{prop-a.4}
Suppose that $0 < \alpha < n$, $0 < \beta < 1$, and $1 < q \leq p < \infty$. Assume that $b \in C^\infty_{\rm c}$.
\begin{enumerate}
    \item[\rm(i)] If $\frac{n}{p} = \alpha + \beta,$
    then $[b,  I_\alpha ]$ maps ${\mathcal M}^p_q$ compactly to ${\rm CMO}$.
    \item[\rm(ii)] If $
    \gamma = \alpha + \beta - \frac{n}{p} \in (0,1),$
    then $[b,  I_\alpha ]$ maps ${\mathcal M}^p_q$ compactly to ${\rm CMO}_\gamma$.
    \item[\rm(iii)] If $\alpha$, $\beta$, $s$ and $t$ satisfy
    $\alpha + \beta<\frac{n}{p},$ $\frac{1}{s} = \frac{1}{p} - \frac{\alpha}{n},$
    $1<s\le t<\infty$, and $\frac{t}{s} = \frac{q}{p},$
    then $[b,  I_\alpha ]$ maps ${\mathcal M}^p_q$
    compactly to ${\mathcal M}^s_t$.
\end{enumerate}
\end{proposition}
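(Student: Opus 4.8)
The plan is to read the boundedness of $T:=[b,I_\alpha]$ off Lemma~\ref{prop:250505-1} and then upgrade it to compactness by exploiting two special features of $b\in C^\infty_{\rm c}$: on one hand $b\in{\rm Lip}_{\beta'}$ for \emph{every} $\beta'\in(0,1)$, which will give extra local smoothness of $Tf$; on the other hand ${\rm supp}(b)$ is compact, which will force $Tf$ to decay at infinity. I will write $B$ for the image under $T$ of the closed unit ball of $\mathcal{M}^p_q$. Since ${\rm CMO}={\rm C}\mathcal{L}_0$ and ${\rm CMO}_\gamma={\rm C}\mathcal{L}_\gamma$ are \emph{closed} subspaces of $\mathcal{L}_0={\rm BMO}$ and $\mathcal{L}_\gamma$, respectively, in cases (i) and (ii) it will suffice to show that $B$ lies in the relevant vanishing subspace and is relatively compact in the ambient space $\mathcal{L}_0$ (resp.\ $\mathcal{L}_\gamma$).

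For cases (i) and (ii) I would first fix $\beta'\in\bigl(\beta,\min\{1,\,1-\alpha+\tfrac np\}\bigr)$, so that $\gamma':=\alpha+\beta'-\tfrac np$ satisfies $0<\gamma'<1$ and $\gamma'>\gamma$ (in (i) one reads $\gamma=0$); such a $\beta'$ exists because $\alpha+\beta-\tfrac np\le\gamma<1$. Applying Lemma~\ref{prop:250505-1}(ii) with $\beta$ replaced by $\beta'$ then shows that $T$ also maps $\mathcal{M}^p_q$ boundedly into ${\rm Lip}_{\gamma'}$, so $B$ is bounded in ${\rm Lip}_{\gamma'}$. Next, for $x\notin{\rm supp}(b)$ one has $Tf(x)=-I_\alpha(bf)(x)$, and, since $|x-y|\to\infty$ uniformly for $y\in{\rm supp}(b)$ as $|x|\to\infty$, the H\"older inequality gives $|Tf(x)|\lesssim(1+|x|)^{-(n-\alpha)}\|f\|_{\mathcal{M}^p_q}$ for large $|x|$; hence each $Tf$ is continuous and vanishes at infinity, therefore bounded, with all these bounds uniform over the unit ball. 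Consequently $B$ is bounded in $\mathcal{M}^{\widetilde q}_{\widetilde q}=L^{\widetilde q}$ for any $\widetilde q\in(\tfrac n{n-\alpha},\infty)$ and, for every $\varphi\in C^\infty_{\rm c}$, $\sup_{f}\sup_{|x|\ge R}|\varphi*Tf(x)|\to0$ as $R\to\infty$. Thus $B$ meets the hypotheses of Lemma~\ref{lem:250426-1} with the two exponents there taken to be $(0,\gamma')$ in case (i) and $(\gamma,\gamma')$ in case (ii), so $B$ is relatively compact in $\mathcal{L}_0$, resp.\ in $\mathcal{L}_\gamma$. To finish I would check that $B\subset{\rm CMO}$, resp.\ $B\subset{\rm CMO}_\gamma$: the ${\rm Lip}_{\gamma'}$-bound gives $\mathcal{O}_\gamma(Tf;Q)\lesssim\ell(Q)^{\gamma'-\gamma}\to0$ as $\ell(Q)\to0^+$, while the boundedness and decay of $Tf$ give $\mathcal{O}_\gamma(Tf;Q)\to0$ both as $|Q|\to\infty$ and as the cube recedes to infinity.

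For case (iii) the target $\mathcal{M}^s_t$ has no absolutely continuous norm, so I would replace Lemma~\ref{lem:250426-1} by a Fr\'echet--Kolmogorov-type criterion for Morrey spaces along the lines of \cite{hst25}: a bounded subset of $\mathcal{M}^s_t$ is relatively compact once it is equicontinuous under translations, uniformly tight at infinity, and uniformly vanishing for the Morrey quasi-norm over small balls. Boundedness of $B$ in $\mathcal{M}^s_t$ is Lemma~\ref{prop:250505-1}(iii). For the other three conditions I would decompose $Tf=b\,I_\alpha f-I_\alpha(bf)$: the first summand lives in the fixed compact set ${\rm supp}(b)$, which takes care of tightness and localizes the remaining work, while the smoothing of $I_\alpha$ together with $b\in{\rm Lip}_{\beta'}$ for $\beta'$ close to $1$ yields the equicontinuity and the small-ball vanishing, exactly as in the non-fractional argument of \cite{hst25}, the only changes being the exponent relations $\tfrac1s=\tfrac1p-\tfrac\alpha n$ and $\tfrac ts=\tfrac qp$. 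The routine input throughout is the boundedness already recorded in Lemma~\ref{prop:250505-1}; the main obstacle will be the \emph{uniform} verification of the vanishing conditions over the whole unit ball. In (i) and (ii) this splits cleanly into a small-scale part (handled by the gain $\gamma'>\gamma$) and a large-scale or far-away part (handled by the compact support of $b$), so the only real bookkeeping is the admissibility of the choice of $\beta'$; in (iii) the genuinely new point is the Morrey-specific small-ball condition, which has no analogue in the $L^p$ or Campanato settings and must be extracted from the local behaviour of $I_\alpha(bf)$ near ${\rm supp}(b)$, following \cite{hst25}.
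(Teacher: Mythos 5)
Your proposal for parts (i) and (ii) follows essentially the same route as the paper's proof: both exploit that $b\in C^\infty_{\rm c}$ belongs to ${\rm Lip}_{\beta'}$ for every $\beta'\in(0,1)$ so that Lemma~\ref{prop:250505-1}(ii) gives a ${\rm Lip}_{\gamma'}$ bound with $\gamma'>\gamma$, both use the compact support of $b$ to get pointwise decay of $[b,I_\alpha]f$ at infinity, and both invoke Lemma~\ref{lem:250426-1} together with the closedness of the vanishing subspace inside the ambient Campanato space. The one genuine variation is how you supply the Morrey-boundedness hypothesis of Lemma~\ref{lem:250426-1} and the large-cube vanishing for ${\rm CMO}$: the paper applies Lemma~\ref{prop:250505-1}(iii) (again with an altered Lipschitz exponent for $b$) to land the image in $\mathcal{M}^s_t$, whereas you deduce an $L^{\widetilde q}$ bound directly from the uniform $L^\infty$ bound and the pointwise decay $|[b,I_\alpha]f(x)|\lesssim(1+|x|)^{-(n-\alpha)}$; both are correct, and your decay exponent $n-\alpha$ (which is sharper than the paper's $n/p-\alpha$) is right because $\mathrm{supp}(b)$ is fixed so that $\int_{\mathrm{supp}(b)}|f|\lesssim\|f\|_{\mathcal{M}^p_q}$ with a constant independent of $|x|$. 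Your self-contained route avoids a second application of Lemma~\ref{prop:250505-1}, at the (small) cost of having to justify the $L^{\widetilde q}$ membership; the paper's route is shorter given that Lemma~\ref{prop:250505-1}(iii) is already available.

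For part (iii) the paper simply declares the argument "similar" and gives no details, whereas you correctly point out that Lemma~\ref{lem:250426-1} targets Lipschitz-scale spaces and therefore cannot produce compactness in $\mathcal{M}^s_t$; a Fr\'echet--Kolmogorov-type criterion adapted to Morrey spaces, as in \cite{hst25}, is indeed the appropriate replacement, and your sketch of how to verify its hypotheses (tightness from $\mathrm{supp}(b)$ compact, equicontinuity and small-ball vanishing from the smoothing of $I_\alpha$ combined with $b\in{\rm Lip}_{\beta'}$) is a more honest account of what (iii) requires than the paper's one-line dismissal. Two small points you should make explicit when writing this up: in case (i) the large-cube vanishing of $\mathcal{O}(Tf;Q)$ really does use the decay estimate and not mere $L^\infty$ boundedness (for $\gamma>0$ boundedness alone suffices, but for $\gamma=0$ you need either the $L^{\widetilde q}$ bound or the pointwise decay); and the passage from relative compactness of $B$ in the ambient space $\mathcal{L}_\gamma$ to relative compactness in ${\rm C}\mathcal{L}_\gamma$ uses that ${\rm C}\mathcal{L}_\gamma$ is a closed subspace, which you state but which deserves a word, especially at $\gamma=0$ where one should confirm that the ${\rm Lip}_{\gamma'}$ and $L^{\widetilde q}$ bounds together place $B$ in BMO and allow Lemma~\ref{lem:250426-1} to yield a Cauchy sequence in the BMO norm rather than merely in a Besov $\dot B^0_{\infty,\infty}$ seminorm.
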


\begin{proof}
Due to the similarity, we only deal with (i). Let $B$ be the unit ball of ${\mathcal M}^p_q$. It suffices to show that
${\mathcal A} =\{[b,  I_\alpha ]f : f \in B\}$
is relatively compact in ${\rm BMO}$.

Since ${\mathcal A}$ is a bounded set in ${\rm Lip}_\varepsilon$ if $\varepsilon \in (0, 1-\beta)$ [see Proposition \ref{prop:250505-1}(ii)], it follows that ${\mathcal A} \subset {\rm VMO}$.

To show (ii), we claim that
\begin{equation}\label{eq:250426-2}
\sup_{f \in B} \left\|
\mathbf{1}_{\mathbb{R}^n \setminus B(0,R)}[b,  I_\alpha ]f \right\|_{L^\infty}
\lesssim \frac{1}{R^{\frac{n}{p}-\alpha}}
\end{equation}
if $R\in(0,\infty)$ satisfies ${\rm supp}(b) \subset B(0,R/2)$.

Indeed, for any $f\in B$ and $x\in\mathbb{R}^n\setminus B(0,R)$, we note that
\[
\left|[b,  I_\alpha ]f(x)\right|
=
\left|\int_{{\mathbb R}^n}\frac{-b(y)}{|x-y|^{n-\alpha}}f(y)\,dy\right|.
\]
For I, by H\"older's inequality, we have
\begin{align*}
\left|[b,  I_\alpha ]f(x)\right|
&\le\|b\|_{L^\infty}\int_{B(0,R/2)}\frac{1}
{|x-y|^{n-\alpha}}|f(y)|\,dy\\
&\lesssim\|b\|_{L^\infty}\frac1{R^{n-\alpha}}
\int_{B(0,R/2)}|f(y)|\,dy\\
&\lesssim\|b\|_{L^\infty}\frac1{R^{n-\alpha}}
\left[\int_{B(0,R/2)}|f(y)|^q\,dy\right]^{1/q}R^{n/q'}\\
&\lesssim\|b\|_{L^\infty}\frac1{R^{n-\alpha}}
\|f\|_{{\mathcal M}^p_q}
R^{\frac{n}{q}-\frac{n}{p}}R^{n/q'}\lesssim \|b\|_{L^\infty}
\|f\|_{{\mathcal M}^p_q}\frac{1}{R^{\frac{n}{p}-\alpha}},
\end{align*}
which shows the claim.
This claim further implies ${\mathcal A} \subset {\rm XMO}$.

Finally, by Proposition \ref{prop:250505-1}(iii), we conlude that $[b, I_\alpha ]$
maps ${\mathcal M}^p_q$ boundedly to ${\mathcal M}^s_t$ if $1 < t \leq s < \infty$,
$\frac{1}{s} = \frac{1}{p} - \frac{\alpha}{n},$ and $\frac{t}{s} = \frac{q}{p}.$
This means that
\[
|Q|^{-1}\left\|\chi_Q[b,  I_\alpha ]f\right\|_{L^1}
\le
|Q|^{-\frac1t}\left\|\chi_Q[b,  I_\alpha ]f\right\|_{L^t}
\lesssim
|Q|^{-\frac1s}
\|f\|_{{\mathcal M}^p_q}
\lesssim
|Q|^{-\frac1s}.
\]
Thus, ${\mathcal A} \subset {\rm CMO}$.

Since
(\ref{eq:250426-1}) holds
for $B={\mathcal A}$ due to (\ref{eq:250426-2}),
we find that ${\mathcal A}$ is compact due to Lemma \ref{lem:250426-1}.
Thus, $[b,  I_\alpha ]$ is compact,
which completes the proof of Proposition \ref{prop-a.4}.
\end{proof}
\end{appendices}

\smallskip

\noindent\textbf{Acknowledgements}\quad
Jin Tao would like to
thank Jingsong Sun for some useful discussions on Lemma \ref{const},
and to thank Chenfeng Zhu, Yinqin Li, and Yirui Zhao for some useful discussions
on the applications to specific function spaces.

\bigskip

\noindent Xing Fu and Jin Tao (Corresponding author)

\medskip

\noindent Hubei Key Laboratory of Applied Mathematics,
Key Laboratory of Intelligent Sensing System and Security (Ministry of Education of China),
Faculty of Mathematics and Statistics,
Hubei University, Wuhan 430062, The People's Republic of China

\smallskip

\noindent {\it E-mails}:
\texttt{xingfu@hubu.edu.cn} (X. Fu)

\noindent\phantom{{\it E-mails:}}
\texttt{jintao@hubu.edu.cn} (J. Tao)

\bigskip

\noindent Yoshihiro Sawano

\medskip

\noindent Department of Mathematics, Chuo University, Tokyo, 112-8551, Japan

\smallskip

\noindent {\it E-mail}:
\texttt{yoshihiro-sawano@celery.ocn.ne.jp}

\bigskip

\noindent  Dachun Yang

\medskip

\noindent  Laboratory of Mathematics and Complex Systems
(Ministry of Education of China),
School of Mathematical Sciences, Beijing Normal University,
Beijing 100875, The People's Republic of China

\smallskip

\noindent {\it E-mail}:
\texttt{dcyang@bnu.edu.cn}
\end{document}